\def\HH{{\mathbb H}}
\def\RR{{\mathbb R}}
\def\suml{\sum\limits}
\def\intl{\int\limits}
\newtheorem{theorem}{\bf Theorem}[section]
\newtheorem{define}[theorem]{\bf Definition}
\newtheorem{corollary}[theorem]{\bf Corollary}
\newtheorem{lemma}[theorem]{\bf Lemma}
\newtheorem{proposition}[theorem]{\bf Proposition}
\newtheorem{example}[theorem]{\bf Example}
\def\Om{\Omega}
\def\om{\omega}
\def\eps{\varepsilon}
\def\intl{\int\limits}
\def\suml{\sum\limits}
\def\({\left(}
\def\){\right)}
\def\suml{\sum\limits}
\def\intl{\int\limits}
\def\prodl{\prod\limits}
\def\grad{\triangledown}
\begin{document}
\title{Local and global sharp gradient estimates for weighted
$p$-harmonic functions}
%\title{Weighted $p$-harmonic functions and rigidity of smooth metric measure spaces}
\author{Nguyen Thac Dung and Nguyen Duy Dat}
\date{\today} 
\maketitle
%\large
\begin{abstract}
Let $(M^n, g, e^{-f}dv)$ be a smooth metric measure space of dimensional $n$. Suppose that $v$ is a positive weighted $p$-eigenfunction associated to the eigenvalue $\lambda_{1,p}$ on $M$, namely
$$ e^{f}div(e^{-f}|\nabla v|^{p-2}\nabla v)=-\lambda_{1,p}v^{p-1}.$$
in the distribution sense. We first give a local gradient estimate for $v$ provided the $m$-dimmensional Bakry-\'{E}mery curvature $Ric_f^{m}$ bounded from below. Consequently, we show that when $Ric_f^m\geq0$ then $v$ is constant if $v$ is of sublinear growth. At the same time, we prove a Harnack inequality for weighted $p$-harmonic functions. Moreover, we show global sharp gradient estimates for weighted $p$-eigenfunctions. Then we use these estimates to study geometric structures at infinity when the first eigenvalue $\lambda_{1,p}$ obtains its maximal value. Our achievements generalize several results proved ealier by Li-Wang, Munteanu-Wang,...(\cite{LW1, LW2, MW1, MW2}).\\

\vspace{0.2cm}
\indent
{\bf 2000 Mathematics Subject Classification}:  53C23, 53C24

{\bf Keywords and Phrases}: Gradient estimates, weighted $p$-harmonic functions, smooth metric measure spaces, Liouville property, Harnack inequality 
\end{abstract}
%\large
\vskip0.4cm
\section{Introduction}
The local Cheng-Yau gradient estimate is a standard result in Riemannian geometry, see \cite{CY75}, also see \cite{SY94}. It asserts that if $M$ be an $n$ dimensional complete Riemannian manifold with $Ric\geq -(n-1)\kappa$ for some $\kappa\geq0$, for $u: B(o, R)\subset M\to\RR$ harmonic and positive then there is a constant $c_n$ depending only on $n$ such that 
\begin{equation}\label{e11}
\sup\limits_{B(o,R/2)}\frac{|\nabla u|}{u}\leq c_n\frac{1+\ \sqrt[]{\kappa}R}{R}.
\end{equation}
Here $B(o,R)$ stands for the geodesic centered at a fixed point $o\in M$. Notice that when $\kappa=0$, this implies that a harmonic function with sublinear growth on a manifold with non-negative Ricci curvature is constant. This result is clearly sharp since on $\RR^n$ there exist harmonic functions which are linear.

Cheng-Yau's method is then extended and generalized by many mathematicians. For example, Li-Yau (see \cite{LY}) obtained a gradient estimate for heat equations. Cheng (see \cite{Cheng}) and H. I. Choi (see \cite{Choi}) proved gradient estimates for harmonic mappings, etc. We refer the reader to survey \cite{S7} for an overview of the subject. 

When $(M^n, g, e^{-f}dv)$ are smooth metric measure spaces, it is very natural to find similar results. Recall that the triple $(M^n, g, e^{-f}d\mu)$ is called a smooth metric measure space if $(M, g)$ is a Riemannian manifold, $f$ is a smooth function on $M$ and $d\mu$ is the volume element induced by the metric $g$. On $M$, we consider the differential operator $\Delta_f$, which is called $f-$Laplacian and given by
$$ \Delta_f\ \cdot:=\Delta\cdot-\left\langle \grad f,\ \grad\cdot\right\rangle.  $$
It is symmetric with repect to the measure $e^{-f}d\mu$. That is,
$$ \intl_M\left\langle \grad \varphi, \grad\psi\right\rangle e^{-f}=-\intl_M(\Delta_f\varphi)\psi e^{-f}, $$
for any $\varphi, \psi\in C^\infty_0(M)$. Smooth metric measure spaces are also called manifolds with density. By $m$-dimensional Bakry-\'{E}mery Ricci tensor we mean
$$ Ric_f^m=Ric+Hessf-\frac{\nabla f\otimes\nabla f}{m-n}, $$
for $m\geq n$. Here $m=n$ iff $f$ is constant. The $\infty-$Barky-\'{E}mery tensor is refered as 
$$ Ric_f=Ric+Hessf. $$  

Brighton (see \cite{Bri}) gave a gradient estimate of positive weighted harmonic function, as a consequence, he proved that any bounded weighted harmonic function on a smooth metric measure space with $Ric_f\geq0$ has to be constant. Later, Munteanu and Wang refined Brighton's argument and proved that positive $f$-harmonic function of sub-exponential growth on smooth metric measure space with nonnegative $Ric_f$ must be a constant function. Moreover, Munteanu and Wang also applied the De Giorgi-Nash-Moser theory to get a sharp gradient estimate for any positive $f$-harmonic function provided that the weighted function $f$ is at most linear growth (see \cite{MW1, MW2} for further results). On the other hand, Wu derived a Li-Yau type estimate for parabolic equations. He also made some results for heat kernel (see \cite{Wu1, Wu14} for the details.).   

From a variational point of view, $p$-harmonic function, or more general weighted $p$-harmonic functions are natural extensions of harmonic functions, or weight harmonic functions, respectively. Compared with the theory for (weighted) harmonic functions, the study of (weighted) $p$-harmonic functions is generally harder, even though elliptic, is degenerate and the regularity results are far weaker. We refer the reader to \cite{Mo, KL} for the connection between $p$-harmonic functions and the inverse mean curvature flow. For the weighted $p$-harmonic function, Wang (see \cite{W}) estimated eigenvalues of this operator. On the other hand, Wang, Yang and Chen (see \cite{WYC}) shown gradient estimates and entropy formulae for weighted $p$-heat equations. Their works generalized Li's and Kotschwar-Ni's results (see \cite{LXD, KL}).

In this paper, motived by Wang-Zhang's gradient estimate for the $p$-harmonic function, we give the following result on local gradient estimates of weighted $p$-eigenfunctions.
%This is also referred to as the . 
\begin{theorem}\label{maintheorem}
Let $(M^{n}, g, e^{-f})$ be a smooth metric measure space of dimension $n$ with $Ric_f^{m}\geq-(m-1)\kappa$. Suppose that $v$ is a positive smooth weighted $p$-eigenfunction with repsect to the eigenvalue $\lambda_{1,p}$ on the ball $B_R=B(o, R)\subset M$. This means
$$ e^{f}div(e^{-f}|\nabla v|^{p-2}\nabla v) =-\lambda_{1,p}v^{p-1}$$
on $B_R=B(o, R)\subset M$. Then there exists a constant $C=C(p, m, n)$ such that 
\begin{equation}\label{maine}
 \frac{|\nabla v|}{v}\leq \frac{C(1+\ \sqrt[]{\kappa}R)}{R} \quad \text{ on }\quad B(o, R/2).
\end{equation}
\end{theorem}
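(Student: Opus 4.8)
The plan is to follow the Cheng--Yau / Wang--Zhang strategy, adapted to the weighted $p$-Laplacian. First I would reduce the problem to a gradient bound for a logarithm. Set $u=\log v$, so that $|\nabla v|/v=|\nabla u|$ and it suffices to bound $|\nabla u|$. A direct computation rewrites the eigenfunction equation: writing $\Delta_{p,f}\,\cdot := e^{f}\,\mathrm{div}(e^{-f}|\nabla u|^{p-2}\nabla\,\cdot)$ for the weighted $p$-Laplacian, one finds
\begin{equation*}
\Delta_{p,f}u = -\lambda_{1,p}-(p-1)|\nabla u|^{p}.
\end{equation*}
The crucial gain is that the eigenvalue now enters only as an additive \emph{constant}; its gradient vanishes, which is exactly why the final estimate can be made independent of $\lambda_{1,p}$.

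Next I would set up a Bochner-type inequality for $\sigma := |\nabla u|^{2}$. Introduce the linearized operator of $\Delta_{p,f}$,
\begin{equation*}
\mathcal{L}(\psi):= e^{f}\,\mathrm{div}\Big(e^{-f}|\nabla u|^{p-2}\big(\nabla\psi+(p-2)\tfrac{\langle\nabla u,\nabla\psi\rangle}{|\nabla u|^{2}}\nabla u\big)\Big),
\end{equation*}
which is (formally) uniformly elliptic on the open set $\{\nabla u\neq 0\}$. Applying the weighted Bochner formula to $\sigma$, differentiating the equation for $u$, and substituting $\langle\nabla u,\nabla\Delta_{p,f}u\rangle = -(p-1)\langle\nabla u,\nabla|\nabla u|^{p}\rangle$ (so that the $\lambda_{1,p}$ term disappears), I would obtain a differential inequality of the schematic form
\begin{equation*}
\mathcal{L}(\sigma)\ \geq\ 2|\nabla u|^{p-2}Ric_f(\nabla u,\nabla u)+c_{p}\,|\nabla u|^{p-2}|\mathrm{Hess}\,u|^{2}-\text{(lower order gradient terms)}.
\end{equation*}
The $\infty$-Bakry--\'Emery term $Ric_f$ is converted into the hypothesis $Ric_f^{m}\geq-(m-1)\kappa$ by absorbing the residual $\langle\nabla f,\nabla u\rangle^{2}/(m-n)$ against part of the $|\mathrm{Hess}\,u|^{2}$ contribution via Young's inequality; this is precisely the mechanism that trades $Hess\,f$ for the $m$-dimensional tensor and accounts for the constant $C=C(p,m,n)$.

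Finally, I would run the localized maximum principle. Choose a cutoff $\phi$ with $\phi\equiv 1$ on $B(o,R/2)$, $\mathrm{supp}\,\phi\subset B(o,R)$, $|\nabla\phi|\leq c/R$ and $\Delta_{f}\phi\geq -c(1+\sqrt{\kappa}R)/R^{2}$, the last bound coming from the weighted Laplacian comparison theorem valid under $Ric_f^{m}\geq-(m-1)\kappa$. Consider $G:=\phi^{2}\sigma$ and evaluate $\mathcal{L}$ at an interior maximum $x_{0}$. There $\nabla G=0$ forces $\nabla\sigma=-2\sigma\,\nabla\phi/\phi$, and $\mathcal{L}(G)\leq0$; feeding these into the Bochner inequality yields a polynomial inequality in $\sigma(x_{0})$ whose leading positive term dominates the lower order and curvature contributions, giving $\sigma(x_{0})\leq C(1+\kappa R^{2})/R^{2}$ and hence, after taking the square root on $B(o,R/2)$, the estimate \eqref{maine}.

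I expect the main obstacle to be the degeneracy of $\Delta_{p,f}$ on the set $\{\nabla u=0\}$, where $\mathcal{L}$ loses ellipticity and $\sigma$ may fail to be smooth. The most delicate part of the argument is therefore either to restrict the maximum-principle computation to $\{\nabla u\neq0\}$ and handle the complementary set directly, or to regularize $|\nabla u|^{p-2}$ by $(|\nabla u|^{2}+\varepsilon)^{(p-2)/2}$ and pass to the limit $\varepsilon\to0$. Keeping all constants in the Young-inequality absorption sharp, $p$- and $m$-dependent but uniform in $\kappa$ and $R$, is the other point requiring careful bookkeeping.
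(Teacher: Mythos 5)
Your reduction, the choice of the linearized operator $\mathcal{L}$, the Bochner identity for $\sigma=|\nabla u|^{2}$, and the conversion of $Ric_f$ into $Ric_f^{m}$ by absorbing $\langle\nabla f,\nabla u\rangle^{2}/(m-n)$ into part of the Hessian term all match the paper (Lemma 2.1 and the computation that follows it). The divergence --- and the gap --- is in the final step. The paper does \emph{not} run a localized maximum principle: it converts the pointwise inequality for $\mathcal{L}_f(h)$ into a weak integral inequality by testing against $h_\varepsilon^{b}\eta^{2}$ (which also handles the set $\{h=0\}$), combines it with a local Sobolev inequality derived from the volume comparison under $Ric_f^{m}\geq-(m-1)\kappa$, and performs Moser iteration to obtain $\|h\|_{L^{\infty}(B_{R/2})}\leq C b_0^{2}/R^{2}$ with $b_0\sim 1+\sqrt{\kappa}R$.

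The maximum-principle route you propose has a genuine obstruction under the stated curvature hypothesis, and it is not the degeneracy at $\{\nabla u=0\}$ that you flag: at an interior maximum of $G=\phi^{2}\sigma$ with $G>0$ one has $\sigma>0$, hence $v$ is smooth in a neighborhood and the pointwise computation is legitimate there. The real problem is the cutoff. Expanding $\mathcal{L}(\phi^{2}\sigma)$ at the maximum forces you to bound $\sigma\,\mathcal{L}(\phi^{2})$ from below, and because of the anisotropic part $A-\mathrm{id}=(p-2)\nabla u\otimes\nabla u/|\nabla u|^{2}$ the quantity $\mathcal{L}(\phi^{2})$ contains the term $(p-2)h^{p/2-1}\,\mathrm{Hess}(\phi^{2})(\nabla u,\nabla u)/|\nabla u|^{2}$. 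For a cutoff built from the distance function this is a \emph{directional} second derivative of $\rho$, which is controlled only by the Hessian comparison theorem --- requiring a lower bound on sectional curvature --- and not by the Laplacian comparison \eqref{laplaciancomparison}, which bounds only the trace $\Delta_f\rho$. This is precisely why the pointwise local estimate of Kotschwar--Ni assumes a sectional curvature bound, and why Wang--Zhang (whom this paper follows) pass to integral estimates and Moser iteration to obtain the result under a Ricci-type bound alone. Unless you can eliminate the $\mathrm{Hess}\,\rho(\nabla u,\nabla u)$ contribution by some other device (for $p=2$ it disappears since $A=\mathrm{id}$, which is why classical Cheng--Yau closes), your final step does not close; the $\varepsilon$-regularization of $|\nabla u|^{p-2}$ you mention addresses the degeneracy but not this issue.
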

Moreover, using these local gradient estimates, we can obtain sharp gradient estimates for weighted $p$-eigenfunctions as follows.
\begin{theorem}\label{2ndtheorem}
Let $(M^n, g, e^{-f}dv)$ be an $n$-dimensional complete noncompact manifold with $Ric_f^{m}\geq-(m-1)$. 
%Suppose that the sectional curvature is bounded from below and the weighted function satisfies
%$$ \liml_{R\to\infty}\frac{\sup\limits_{q\in B_R}|\nabla f|(q)}{R}=0. $$
%For $p\in(1, 2)$,
 If $v$ is a positive weighted $p$-eigenfunction with respect to the first eigenvalue $\lambda_{1,p}$, then 
$$ |\nabla\ln v|\leq y. $$
Here $y$ is the unique positive root of the equation
$$ (p-1)y^{p}-(m-1)y^{p-1}+\lambda_{1,p}=0. $$  
\end{theorem}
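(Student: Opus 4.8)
The plan is to pass to $u:=\ln v$ and prove the equivalent statement that $w:=|\nabla u|=|\nabla\ln v|$ is bounded above by $y$. First I would record the equation for $u$. Using $\nabla v=v\,\nabla u$ and $|\nabla v|=v\,w$, a direct computation converts the eigenfunction equation into
$$\Delta_{p,f}u:=e^{f}\,\mathrm{div}\!\left(e^{-f}|\nabla u|^{p-2}\nabla u\right)=-\lambda_{1,p}-(p-1)|\nabla u|^{p}.$$
Applying Theorem~\ref{maintheorem} with $\kappa=1$ on balls $B(o,R)$ and letting $R\to\infty$ shows that $w$ is bounded on all of $M$; write $b:=\sup_M w<\infty$. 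The task is then to sharpen this crude bound to $b\le y$, and the finiteness of $b$ is precisely what will license the maximum-principle argument below on the noncompact manifold.

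The heart of the matter is a weighted $p$-Bochner inequality evaluated at (near-)maxima of $w$. Writing $e_1=\nabla u/|\nabla u|$ on $\{\nabla u\neq0\}$ and linearizing $\Delta_{p,f}$ along $u$, the Bochner formula controls $\mathcal L(h)$ for a suitable function $h$ of $w$ in terms of the full Hessian of $u$, a first-order term $\langle\nabla u,\nabla\Delta_{p,f}u\rangle$, and $Ric_f^{m}(\nabla u,\nabla u)$. The decisive point, and the source of the \emph{sharp} constant, is that at a critical point of $w$ one has $\nabla w=0$, so both the first-order term and the $e_1e_1$ component of $\mathrm{Hess}\,u$ drop out; inserting this into the refined Cauchy--Schwarz inequality adapted to $Ric_f^{m}$ improves the effective dimensional constant from $m$ to $m-1$, which is exactly the gain that distinguishes the sharp estimate from the naive Bochner bound. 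After substituting $\Delta_{p,f}u=-\lambda_{1,p}-(p-1)w^{p}$ and $Ric_f^{m}\ge-(m-1)$, the zeroth-order part of the inequality assembles into the polynomial
$$P(w):=(p-1)w^{p}-(m-1)w^{p-1}+\lambda_{1,p}.$$

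Now I would run the maximum principle. If $b>0$ is attained at an interior point $x_0$ with $\nabla u(x_0)\neq0$, then $\mathcal L(h)(x_0)\le0$ while the Bochner lower bound forces $\mathcal L(h)(x_0)\ge c\,P(b)$ with $c>0$, whence $P(b)\le0$. Since $M$ is complete and noncompact, in general no such $x_0$ exists, so I would instead produce an approximating sequence: either via an Omori--Yau type maximum principle for $\mathcal L$ (available from completeness and $Ric_f^{m}\ge-(m-1)$), giving points $x_k$ with $w(x_k)\to b$, $|\nabla w(x_k)|\to0$ and $\mathcal L(h)(x_k)\le o(1)$; or via a cutoff supported on $B(o,2R)$, integrating the Bochner inequality against $e^{-f}dv$ and using the boundedness of $w$ together with the weighted volume comparison from $Ric_f^{m}\ge-(m-1)$ to absorb the boundary terms as $R\to\infty$. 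Either way one obtains $P(b)\le0$ in the limit. Because $P$ decreases and then increases with minimum at $w=(m-1)/p$, the inequality $P(b)\le0$ confines $b$ to the interval where $P$ is nonpositive and therefore yields $b\le y$, where $y$ is the relevant positive root of $P$; on the degeneracy set $\{\nabla u=0\}$, and in the case $b=0$, the bound is trivial. The model computation on a space with $Ric_f^{m}\equiv-(m-1)$, where $u$ is linear in a Busemann function $\beta$ with $|\nabla\beta|=1$ and $\Delta_f\beta=m-1$, gives $P(|\nabla u|)=0$ and confirms that $y$ is sharp.

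The step I expect to be the main obstacle is making the Bochner computation rigorous in the degenerate, low-regularity setting of the $p$-Laplacian: the linearized operator $\mathcal L$ and the weight $|\nabla u|^{p-2}$ are singular precisely on $\{\nabla u=0\}$, so every identity must be justified on $\{\nabla u\neq0\}$ and then extended, and the correct power $h$ of $w$ must be chosen so that the singular factors combine favorably. The second delicate point is the passage to the limit on the noncompact $M$: controlling the gradient and boundary error terms uniformly in $R$ is where the global bound from Theorem~\ref{maintheorem} and the weighted volume estimate are indispensable. When $\lambda_{1,p}$ attains its maximal value $((m-1)/p)^{p}$ the two positive roots of $P$ coalesce to $y=(m-1)/p$, which is the regime relevant to the applications at infinity.
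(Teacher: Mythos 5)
Your setup matches the paper's: pass to the logarithm, use Theorem \ref{maintheorem} to obtain an a priori global bound on $|\nabla\ln v|$, run the Bochner formula for the linearized operator with the refined Cauchy--Schwarz inequality adapted to $Ric_f^m$ (which is indeed where the constant $m-1$ comes from), and reduce the sharp bound to controlling the polynomial $P$. The genuine gap is in how you close the argument on the noncompact manifold. The paper never proves ``$P(b)\le 0$ at a (near-)supremum,'' and neither of your two proposed mechanisms works as stated: an Omori--Yau principle for the degenerate quasilinear operator $\mathfrak{L}$ is not off-the-shelf and is nowhere established here; and if you simply integrate the Bochner inequality against a cutoff, the zeroth-order term is a positive multiple of $Q(h)=h^{p/2}-(m-1)h^{(p-1)/2}+\lambda_{1,p}(p-1)^{p-1}$, which changes sign between its two positive roots, so the integrated inequality does not localize at the supremum and gives no information about $\sup h$.

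The device you are missing is the truncation $\omega=(h-(x+\delta))^{+}$ at the \emph{larger} root $x$ of $Q$. On $\{h\ge x+\delta\}$ one has $Q(h)\ge c_3\,\omega$ because $Q'$ is bounded below by a positive constant there; this in turn requires $x\ge(p-1)^2\left(\frac{m-1}{p}\right)^2$, i.e.\ that $x$ lies to the right of the minimum of $Q$, which is precisely the content of Lemma \ref{eigenvalue} ($\lambda_{1,p}\le\left(\frac{m-1}{p}\right)^p$, proved separately by playing the exponential volume growth forced by $\lambda_{1,p}>0$ against the comparison bound $V_f(B(R))\le Ce^{(m-1)R}$). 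Your sketch never invokes this eigenvalue bound, yet without it the root $y$ need not exist and the sign of $Q'$ past the root is not controlled. Once the truncation is in place one gets the weak inequality $\mathfrak{L}(\omega)\ge a\omega-b|\nabla\omega|$ with a signed zeroth-order term; testing against $\phi^2\omega^{q}$, integrating by parts, and choosing $q$ to absorb the gradient terms yields either $\omega\equiv0$ or $\int_{B(R)}\omega^{q+1}e^{-f}\ge c_1e^{R\ln(c_2/\varepsilon)}$, and the latter contradicts $V_f(B(R))\le ce^{(m-1)R}$ for small $\varepsilon$. Hence $\omega\equiv0$, so $h\le x$ and the theorem follows. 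You correctly identified the Bochner computation and the a priori boundedness as essential inputs, but the eigenvalue upper bound, the truncation at the root, and the $\omega^{q}$ test-function/volume-growth contradiction are the actual engine of the proof and must be supplied.
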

A directly consequence of the theorem \ref{2ndtheorem} is a sharp gradient estimate for positive weighted $p$-harmonic function.
\begin{corollary}
Let $(M^n, g, e^{-f}dv)$ be an $n$-dimensional complete noncompact manifold with $Ric_f^{m}\geq-(m-1)$. 
%Suppose that the sectional curvature is bounded and the weighted function satisfies
%$$ \liml_{R\to\infty}\frac{\sup\limits_{q\in B_R}|\nabla f|(q)}{R}=0. $$
%For $p\in(1, 2)$, 
If $v$ is a positive weighted $p$-harmonic function
then 
$$ |\nabla\ln v|\leq \frac{m-1}{p-1}. $$
\end{corollary}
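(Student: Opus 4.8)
The plan is to observe that a positive weighted $p$-harmonic function is exactly a positive weighted $p$-eigenfunction associated to the eigenvalue $\lambda_{1,p}=0$, so that the Corollary follows immediately from Theorem \ref{2ndtheorem} by specialization. Indeed, putting $\lambda_{1,p}=0$ in the eigenfunction equation
$$ e^{f}div(e^{-f}|\nabla v|^{p-2}\nabla v)=-\lambda_{1,p}v^{p-1} $$
reduces it to $e^{f}div(e^{-f}|\nabla v|^{p-2}\nabla v)=0$, which is precisely the defining equation of a weighted $p$-harmonic function.

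First I would substitute $\lambda_{1,p}=0$ into the polynomial appearing in Theorem \ref{2ndtheorem},
$$ (p-1)y^{p}-(m-1)y^{p-1}+\lambda_{1,p}=0, $$
to obtain $(p-1)y^{p}-(m-1)y^{p-1}=0$, that is $y^{p-1}\left((p-1)y-(m-1)\right)=0$. Discarding the root $y=0$, the unique positive root is $y=\frac{m-1}{p-1}$, so the quantity $y$ furnished by Theorem \ref{2ndtheorem} equals $\frac{m-1}{p-1}$ in the harmonic case.

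Applying Theorem \ref{2ndtheorem} with this value of $y$ then gives $|\nabla\ln v|\leq\frac{m-1}{p-1}$, as claimed. There is no real obstacle in this argument: all of the analytic work is already contained in Theorem \ref{2ndtheorem}, and the Corollary reduces to the elementary algebraic fact that the polynomial $(p-1)y^{p}-(m-1)y^{p-1}$ has $\frac{m-1}{p-1}$ as its unique positive zero. The only point worth recording explicitly is that $\lambda_{1,p}=0$ is indeed the relevant eigenvalue here, which holds by definition since the right-hand side of the eigenvalue equation vanishes for a weighted $p$-harmonic function.
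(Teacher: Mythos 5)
Your proposal is correct and coincides with the paper's own reasoning: the authors state the corollary as a direct consequence of Theorem \ref{2ndtheorem}, obtained by setting $\lambda_{1,p}=0$ (which the paper explicitly identifies as the weighted $p$-harmonic case) and noting that the unique positive root of $(p-1)y^{p}-(m-1)y^{p-1}=0$ is $y=\frac{m-1}{p-1}$. No further comment is needed.
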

 The sharpness of the estimate is demonstrated by the below example.
\begin{example}
Let $M^n=\RR\times N^{n-1}$ with a warped product metric 
$$ ds^2=dt^2+e^{2t}ds_N^2, $$
where $N$ is a complete manifold with non-negative Ricci curvature. Then it can be directly checked that $Ric_M\geq-(n-1)$  (See \cite{Liwang} for details of computation). Moreover, we have
$$ \Delta =\frac{\partial^2}{\partial t^2}+(n-1)\frac{\partial}{\partial t}+e^{-2t}\Delta_N. $$
Choose weighted function $f=-(m-n)t$, then the $m$-dimensional Bakry-\'{E}mery curvature is bounded from below by $-(m-1)$.

 Let $v(t, x)=e^{-at}$, where $\frac{m-1}{p}\leq a\leq\frac{m-1}{p-1}$, we have 
$$ \frac{|\nabla v|}{v}=a. $$
It is easy to show that
$$\begin{aligned} 
|\nabla v|^{p-2}\left\langle\nabla v, \nabla f\right\rangle&=(m-n)a^{p-1}v^{p-1}\\
\left\langle \nabla|\nabla v|^{p-2}, \nabla v\right\rangle&=(p-2)a^pv^{p-1}\\
|\nabla v|^{p-2}\Delta v&=(1-n+a)a^{p-1}v^{p-1}. 
\end{aligned} $$
Hence, 
$$ e^fdiv(e^{-f}|\nabla v|^{p-2}\nabla v)=((p-1)a-(m-1))a^{p-1}v^{p-1}. $$
This implies that 
$$ \lambda_{1,p}=(m-1-(p-1)a)a^{p-1}, $$
or equivalently,
$$ (p-1)a^p-(m-1)a^{p-1}+\lambda_{1,p}=0. $$
\end{example}
It is also very interesting to ask what are geometric structures of manifolds with $\lambda_{1,p}$ achieving its maximal value. When $f$ is constant, this problem has been studied by Li-Wang, Sung-Wang in \cite{LW1, LW2, SW}. In this paper, we prove a generalization of their results. 
\begin{theorem}\label{3rdtheorem}Let $(M^n, g, e^{-f}dv)$ be a smooth metric measure space of dimension $n\geq2$. Suppose that $Ric_f^m\geq-(m-1)$ and $\lambda_{1,p}=\left(\frac{m-1}{p}\right)^p$. Then either $M$ has no $p$-parabolic ends or $M=\RR\times N^{n-1}$ for some compact manifold $N$. Here the definition of $p$-parabolic ends is given in the section 3.
\end{theorem}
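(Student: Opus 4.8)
The plan is to run a Li--Wang type rigidity argument, feeding the sharp pointwise bound of Theorem~\ref{2ndtheorem} into an integrated weighted Bochner identity and reading off the splitting from its equality case. Set $\beta=\frac{m-1}{p}$. A short computation shows that at the maximal value $\lambda_{1,p}=\beta^{p}$ the polynomial $F(y)=(p-1)y^{p}-(m-1)y^{p-1}+\lambda_{1,p}$ satisfies $F(\beta)=F'(\beta)=0$, so $\beta$ is a \emph{double} root of the equation in Theorem~\ref{2ndtheorem}; for $\lambda_{1,p}>\beta^{p}$ there is no positive root at all, which is why $\beta^{p}$ is the largest possible eigenvalue. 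Consequently every positive weighted $p$-eigenfunction $v$ for $\lambda_{1,p}$ obeys the borderline estimate $|\nabla\ln v|\le\beta$, and the whole problem reduces to analysing when this is an equality. The dichotomy is established by assuming that $M$ possesses a $p$-parabolic end $E$ and deducing the splitting; if no such end exists we are in the first alternative.

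First I would fix a positive weighted $p$-eigenfunction $v$ associated to the bottom eigenvalue $\lambda_{1,p}$ (such a function exists at the bottom of the spectrum), so that $|\nabla\ln v|\le\beta$ by the previous paragraph. The goal is to force equality. To this end I would apply the weighted Bochner formula for $\Delta_{f}$ to $v$ and integrate the resulting differential inequality against cutoff functions $\phi_{i}$ supported in larger and larger regions. This is exactly where the hypothesis on ends is used: $p$-parabolicity of $E$ furnishes cutoffs with $\int_{M}|\nabla\phi_{i}|^{p}e^{-f}\to0$, so that all boundary and error contributions at infinity along $E$ drop out in the limit, leaving a clean global integral identity.

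Next I would exploit the maximality of $\lambda_{1,p}$. In the integrated identity the curvature hypothesis $Ric_f^{m}\ge-(m-1)$, a refined Kato inequality for $\nabla v$, and the eigenvalue test-function inequality all appear as one-sided estimates; the fact that $v$ realizes the extremal eigenvalue $\beta^{p}$ forces each of them to hold with equality. Converting these integral equalities into pointwise ones on the regular set $\{\nabla v\neq0\}$ yields $|\nabla\ln v|\equiv\beta$, the identity $Ric_f^{m}(\nabla v,\nabla v)\equiv-(m-1)|\nabla v|^{2}$, total umbilicity of the level sets $\{v=\mathrm{const}\}$ with precisely the weighted second fundamental form of the comparison model, and the fact that $v$ is a function of a single variable. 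In particular $\nabla v$ never vanishes, which removes the degeneracy of the $p$-Laplacian and makes $v$ smooth.

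Finally, set $t=-\beta^{-1}\ln v$, so that $|\nabla t|\equiv1$ and $t$ has no critical points; its unit gradient flow diffeomorphically trivializes $M$ as $\RR\times N$ with $N=t^{-1}(0)$, and the curvature and umbilicity identities pin the metric down to the warped product of the example above, with $N$ compact (compactness being forced by the growth rate of the $p$-parabolic end). This gives $M=\RR\times N^{n-1}$, as claimed. The main obstacle is twofold: the degeneracy of the $p$-Laplacian on the critical set $\{\nabla v=0\}$ means the Bochner formula and the refined Kato inequality are only available on the regular set, so the entire integration must be carried out there with cutoffs that also avoid the critical set, and only afterwards can one conclude the critical set is empty; and one must verify carefully that the parabolic cutoffs genuinely annihilate every boundary term, since it is solely this vanishing that upgrades the borderline gradient estimate into the pointwise rigidity driving the splitting.
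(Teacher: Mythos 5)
There is a genuine gap at the heart of your argument: the step where ``the fact that $v$ realizes the extremal eigenvalue forces each one-sided estimate to hold with equality'' is asserted without a mechanism, and it is precisely here that the paper does something different. Maximality of $\lambda_{1,p}$ by itself cannot force $|\nabla\ln v|\equiv\frac{m-1}{p}$ --- that is exactly what the dichotomy denies when $M$ has no $p$-parabolic end --- so the parabolic end must enter quantitatively, and ``cutoffs with small $p$-energy along $E$'' is not enough: your integrated Bochner inequality lives on all of $M$, while such cutoffs only control the contribution from $E$. What actually makes the error term at infinity vanish in the paper is the sharp exponential volume decay $V_f(E\setminus B(R))\le \widehat c\,e^{-(m-1)R}$ of a $p$-parabolic end when $\lambda_{1,p}=\left(\frac{m-1}{p}\right)^p$ (Theorem 0.1 of Buckley--Koskela), together with the estimate $\beta\le -r+\widetilde c$ on $M\setminus E$ and the volume comparison $V_f(B(R))\le c'e^{(m-1)R}$; none of this quantitative input appears in your proposal.

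Moreover, the paper does not run the rigidity argument on the eigenfunction $v$ at all. It takes the Busemann function $\beta$ of a ray in the $p$-parabolic end, notes $\Delta_f\beta\ge-(m-1)$ from the Bakry--Qian comparison, and computes that $\omega=e^{\frac{m-1}{p}\beta}$ satisfies $\Delta_{p,f}\omega\ge-\lambda_{1,p}\omega^{p-1}$. Plugging $\phi\omega$ into the Rayleigh quotient and killing the cutoff error with the volume estimates above forces $\Delta_{p,f}\omega+\lambda_{1,p}\omega^{p-1}\equiv0$, hence $\Delta_f\beta\equiv-(m-1)$, and the splitting comes from the equality case of the Laplacian comparison --- not from a refined Kato inequality or umbilicity of level sets of $v$. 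Your eigenfunction-plus-Bochner strategy is the one the paper reserves for the $p$-nonparabolic case (via a weighted Poincar\'e inequality); for the parabolic case as you have set it up, both the existence of a positive eigenfunction attaining the bottom of the spectrum and the saturation of the pointwise inequalities remain unproved, so the argument does not close.
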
 
On the other hand, if we only assume that the Bakry-\'{E}mery curvature is bounded from below, we also can give a upper bound estimate of the first eigenvalue. For example, we consider smooth metric measure spaces and prove that if $Ric_f\geq0$ then $\lambda_{1, p}\leq\left(\frac{a}{p}\right)^{p}$, where $a$ is the linear growth rate of $f$. Moreover, we also show that this estimate is optimal. When $\lambda_{1,p}$ is maximal we obtain the following theorem.
\begin{theorem}\label{rigidity1}
Let $(M, g, e^{-f}dv)$ be a smooth metric measure space with $Ric_f\geq0$. Suppose that $\lambda_{1,p}=\left(\dfrac{a}{p}\right)^{p}$, where $a$ is the linear growth rate of $f$. Then, either $M$ is connected at infinity or $M=\RR\times N$ where $N$ is a compact manifold.
\end{theorem}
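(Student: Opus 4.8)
The plan is to argue contrapositively against connectedness at infinity: assume $M$ has at least two ends and deduce $M=\RR\times N$ with $N$ compact. The point of departure is the global sharp gradient estimate for positive weighted $p$-eigenfunctions, i.e. the $Ric_f\ge 0$ analogue of Theorem \ref{2ndtheorem}, whose governing equation here is $(p-1)y^p-a\,y^{p-1}+\lambda_{1,p}=0$. At the extremal value $\lambda_{1,p}=(a/p)^p$ the positive root collapses to the double root $y=a/p$ (one checks $h(a/p)=h'(a/p)=0$ for $h(y)=(p-1)y^p-ay^{p-1}+\lambda_{1,p}$), so the estimate specializes to $|\nabla\ln v|\le a/p$. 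The matching upper bound $\lambda_{1,p}\le (a/p)^p$ is the optimal estimate recorded just above the theorem, coming from the variational characterization of $\lambda_{1,p}$ together with the linear growth of $f$; here I only need its equality case.

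First I would produce an extremal eigenfunction. A positive global ground state $v$ for $\lambda_{1,p}$ exists; the role of the two-ends hypothesis is to furnish one whose logarithmic gradient realizes the extremal rate. Following the Sung--Wang exhaustion procedure \cite{SW} adapted to the weighted $p$-Laplacian, one solves $e^{f}\,\mathrm{div}(e^{-f}|\nabla v_i|^{p-2}\nabla v_i)=-\lambda_{1,p}v_i^{p-1}$ on an exhaustion with boundary data forcing growth on one end and decay on the other, normalizes, and extracts a limiting positive eigenfunction $v$. Because the eigenvalue is extremal, the asymptotic decay/growth rate of $v$ on each end is exactly $a/p$, so $|\nabla\ln v|\to a/p$ along both ends; combined with the global bound $|\nabla\ln v|\le a/p$, this forces the supremum to be attained, and hence (by the strong maximum principle inside the Bochner argument) $|\nabla\ln v|\equiv a/p$.

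The heart of the matter is the equality analysis. Setting $u=\ln v$, I would feed the eigenfunction equation into the weighted Bochner--Weitzenb\"ock formula for the $p$-Laplacian; the same integral inequality that proves the gradient estimate now holds with equality. Tracing back the Cauchy--Schwarz and Bakry--\'Emery steps forces simultaneously that $|\nabla u|\equiv a/p$ is constant, that $Ric_f(\nabla u,\nabla u)=0$, and that $Hess\,u$ is supported only in the gradient direction, so the level sets $\{v=\text{const}\}$ are totally geodesic. These identities say exactly that $\nabla u/|\nabla u|$ is a parallel unit field whose integral curves are minimizing geodesics, so $M$ splits isometrically as $\RR\times N$ with $u$ affine along the $\RR$-factor with slope $a/p$; the weight $f$ is then affine along $\RR$ with slope $a$ (consistent with $Hess f=0$ in that direction and $Ric_N\ge 0$). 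Compactness of $N$ follows because the splitting function is proper with precisely the two prescribed ends, so each cross-section $\{t\}\times N$ is compact.

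The main obstacle is twofold, and both difficulties spring from the degeneracy of the $p$-Laplacian. First, the construction of the extremal eigenfunction demands a genuine quasilinear existence theory—barriers and uniform gradient estimates for the non-uniformly elliptic operator $e^{f}\,\mathrm{div}(e^{-f}|\nabla v|^{p-2}\nabla v)$—so the usual linear Li--Tam machinery must be replaced by its $p$-harmonic counterpart. Second, the Bochner equality analysis is delicate precisely on the critical set $\{\nabla v=0\}$, where the formula degenerates; one must either show this set is negligible or carry out the splitting on $\{\nabla v\ne 0\}$ and extend it across the critical set by continuity together with the regularity structure of weighted $p$-harmonic functions.
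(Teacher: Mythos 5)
Your strategy is genuinely different from the paper's, and it contains gaps that I do not see how to close. The paper never constructs an extremal eigenfunction. Instead it first observes (Lemma \ref{weightedend}) that $Ric_f\geq 0$ forces $M$ to have at most one weighted $p$-nonparabolic end, so if $M$ is disconnected at infinity there is a weighted $p$-parabolic end; it then takes the Busemann function $\beta$ of a ray in the parabolic part $F$, uses Munteanu--Wang's comparison $\Delta_f\beta\geq -a$ to check that the \emph{explicit} function $\omega=e^{\frac{a}{p}\beta}$ satisfies $\Delta_{p,f}\omega\geq-\lambda_{1,p}\omega^{p-1}$, and plugs $\phi\omega$ into the variational characterization of $\lambda_{1,p}$. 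The weighted volume decay $V_f(F\setminus B(R))\leq Ce^{-aR}$, valid precisely because $\lambda_{1,p}$ is maximal, together with $V_f(B(R)\cap E)\leq ce^{aR}$, kills the cut-off error terms and forces $\Delta_{p,f}\omega+\lambda_{1,p}\omega^{p-1}\equiv 0$, hence $\Delta_f\beta\equiv-a$ and $|\nabla\beta|\equiv1$; the splitting then follows from the rigidity in Munteanu--Wang's comparison. No quasilinear existence theory and no equality analysis in a Bochner formula for the degenerate operator are needed.

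The concrete problems with your route are these. First, your point of departure --- a global bound $|\nabla\ln v|\leq a/p$ under $Ric_f\geq0$ with governing polynomial $(p-1)y^p-ay^{p-1}+\lambda_{1,p}$ --- is not proved in the paper and does not follow from Theorem \ref{2ndtheorem}: that proof uses $Ric_f^m\geq-(m-1)$ in an essential way (the term $\langle\nabla f,\nabla u\rangle^{2}/(m-n)$ is what absorbs the weight in the Hessian estimate), and with only $Ric_f\geq0$ and $f$ of linear growth the same computation does not close; the paper's own $Ric_f\geq0$ rigidity statement with a gradient hypothesis (Theorem \ref{riccif}) needs the extra assumption $|\nabla f|\leq a$ and targets a different extremal value. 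Second, the Li--Tam/Sung--Wang exhaustion producing a positive eigenfunction adapted to two ends relies on linearity (differences of solutions being solutions); for $p\neq2$ no such construction is available, and you flag but do not resolve this. Third, even granting existence, your claim that $|\nabla\ln v|\to a/p$ along both ends is unsupported --- nothing forces an eigenfunction at the extremal eigenvalue to attain the extremal logarithmic gradient asymptotically, and without that the maximum-principle step never starts. Each of these is a missing theorem rather than a routine verification, so the proposal as written does not establish the statement.
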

This theorem is a generalization of Munteanu and Wang's theory on weighted harmonic functions on smooth metric measure spaces (see \cite{MW1}). 

This paper is organized as follows. In the section 2, we give a proof of the main theorem \ref{maintheorem} by using the Moser's iteration. As its applications, we show a Liouville property and a Harnack inequality for weighted $p$-harmonic functions. In the section 3, we prove the theorem \ref{2ndtheorem}. The proof the theorem \ref{3rdtheorem} is given in the section 4. In the section 5, we investigate smooth metric measure spaces with Bakry-\'{E}mery curvatures bounded from below. We generalize Munteanu and Wang's results in \cite{MW1, MW2} by using the theory of weighted $p$-harmonic functions. 
%%%%%%%%%%%%%%%%%%%%%%%%%%%
%%%%%%%%%%%%%%%%%%%%%%%%%%%
%%%%%%%%%%%%%%%%%%%%%%%%%%%
\section{Local gradient estimates for weighted $p$-eigenfunctions on $(M, g, e^{-f}d\mu)$}
\setcounter{equation}{0}
Suppose that $(M, g, e^{-f}dv)$ is a smooth metric measure space, and $\Omega\subset M$ be an open subset. Let $v$ be a positive weighted $p$-eigenfunction function with respect to the first eigenvalue $\lambda_{1,p}$, for $p>1$, namely, $v\in W^{1,p}_{loc}(\Omega)$ satisfying the following weighted $p$-Laplacian equation,
\begin{equation}\label{flap}
 \Delta_{p,f}v:=e^{f}div(e^{-f}|\nabla v|^{p-2}\nabla v)=-\lambda_{1,p}v^{p-1}. 
\end{equation}
in the distribution sense, i. .e,
$$ \int_\Omega \left\langle|\nabla v|^{p-2}\nabla v, \nabla\varphi\right\rangle e^{-f}dv=\lambda_{1,p}\int_\Omega v^{p-1}\varphi e^{-f}dv  $$
where $\varphi\in W^{1,p}_{0}(\Omega)$. The regularity of solutions of the equation \eqref{flap} implies $v\in{\mathcal C}^{1, \alpha}$ for some $\alpha>0$, for example, see \cite{Tol}. Morevoer, it is well-known that $v\in W^{2,2}_{loc}$ if $p\geq 2$ and $v\in W^{2, p}_{loc}$ if $1<p<2$. In addition, $v$ is smoth outside the set $\left\{\nabla v=0\right\}$.
 
 Note that, when $\lambda_{1,p}=0$ then $v$ is called a weighted $p$-harmonic function. Let $u=-(p-1)\log v$, then $v=e^{-u/(p-1)}$.  It is easy to see that $u$ satisfies 
$$\begin{aligned}
e^{f}div(e^{-f}|\nabla u|^{p-2}\nabla u)=|\nabla u|^{p}+\lambda_{1,p}(p-1)^{p-1}.
\end{aligned} $$  
Put $h:=|\grad u|^2$, the above equation can be rewritten as follows.
\begin{equation}\label{dd1}
\left(\frac{p}{2}-1\right)h^{p/2-2}\left\langle\nabla h, \nabla u\right\rangle+h^{p/2-1}\Delta_fu =h^{p/2}+\lambda_{1,p}(p-1)^{p-1}. 
\end{equation}
Assume that $h>0$. As in \cite{KL}, \cite{Mo}, we consider the below operator
$$ {\cal L}_f(\psi):=e^fdiv\Big(e^{-f}h^{p/2-1}A(\nabla \psi)\Big)-ph^{p/2-1}\left\langle \nabla u, \nabla\psi\right\rangle,  $$
where 
$$ A=id+(p-2)\frac{\nabla u\otimes\nabla u}{|\nabla u|^{2}}. $$

%%%%%%%%
%%%%%%% 
%%%%%%%%
%%%%%%%
%%%%%%%
We have the following lemma and the proof is by direct computation.
\begin{lemma}\label{d2}
\begin{equation}\label{e1}
{\cal L}_f(h)=2h^{p/2-1}(u_{ij}^2+\text{Ric}_f(\nabla u, \nabla u))+\left(\frac{p}{2}-1\right)h^{p/2-2}|\nabla h|^{2}.
\end{equation}
\end{lemma}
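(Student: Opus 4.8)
The plan is to derive \eqref{e1} from the weighted Bochner formula together with a single differentiation of the eigenfunction equation \eqref{dd1}. First I would record the $f$-Bochner identity
$$\frac{1}{2}\Delta_f h = u_{ij}^2 + \langle \nabla u, \nabla \Delta_f u\rangle + \text{Ric}_f(\nabla u, \nabla u),$$
which follows from the classical Bochner formula for $\tfrac12\Delta|\nabla u|^2$ after absorbing the extra term $-\tfrac12\langle\nabla f,\nabla h\rangle$ and the contribution coming from differentiating $\langle\nabla f,\nabla u\rangle$ into the tensor $\text{Ric}_f = \text{Ric} + \text{Hess } f$. The Ricci identity used to commute the third derivatives of $u$ is what produces the curvature term, so this is the step at which $\text{Ric}_f$ enters.

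Next I would expand the definition of ${\cal L}_f(h)$. Writing $A(\nabla h) = \nabla h + (p-2)\frac{\langle\nabla u,\nabla h\rangle}{h}\nabla u$ and carrying the weight $e^{-f}$ through the divergence, a direct computation gives
$${\cal L}_f(h) = h^{p/2-1}\Delta_f h + \left(\frac{p}{2}-1\right)h^{p/2-2}|\nabla h|^2 + (p-2)w\,\Delta_f u + (p-2)\langle\nabla w,\nabla u\rangle - p\,h^{p/2-1}\langle\nabla u,\nabla h\rangle,$$
where $w := h^{p/2-2}\langle\nabla u,\nabla h\rangle$. Substituting the Bochner identity for $h^{p/2-1}\Delta_f h$ replaces the first term by $2h^{p/2-1}\bigl(u_{ij}^2 + \text{Ric}_f(\nabla u,\nabla u)\bigr) + 2h^{p/2-1}\langle\nabla u,\nabla\Delta_f u\rangle$, so it remains only to show that every term other than the two appearing on the right of \eqref{e1} cancels.

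To dispose of the leftover terms I would differentiate \eqref{dd1}, written in the form $h^{p/2-1}\Delta_f u = h^{p/2} + \lambda_{1,p}(p-1)^{p-1} - (\tfrac{p}{2}-1)w$, and pair the result with $\nabla u$. This yields an expression for $h^{p/2-1}\langle\nabla u,\nabla\Delta_f u\rangle$ purely in terms of $\langle\nabla u,\nabla h\rangle$, $\langle\nabla w,\nabla u\rangle$ and $w\,\Delta_f u$, the eigenvalue constant $\lambda_{1,p}(p-1)^{p-1}$ dropping out since its gradient vanishes. Feeding this back in and using the crucial algebraic relation $p-2 = 2(\tfrac{p}{2}-1)$, the three families of leftover terms — those proportional to $w\,\Delta_f u$, to $\langle\nabla w,\nabla u\rangle$, and to $h^{p/2-1}\langle\nabla u,\nabla h\rangle$ — cancel in pairs exactly, leaving precisely the right-hand side of \eqref{e1}.

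The main obstacle I anticipate is purely one of bookkeeping: correctly expanding the anisotropic divergence $e^f\mathrm{div}(e^{-f}h^{p/2-1}A(\nabla h))$ with the weight present, and tracking the coefficients through the substitution so that the cancellation forced by $p = 2\cdot\tfrac{p}{2}$ is transparent. No genuinely new idea beyond the weighted Bochner formula and one differentiation of the eigenfunction equation is required.
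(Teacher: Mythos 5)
Your proposal is correct and follows essentially the same route as the paper's proof: expand the weighted divergence defining ${\cal L}_f(h)$, substitute the $f$-Bochner identity for $h^{p/2-1}\Delta_f h$, then differentiate \eqref{dd1} and pair with $\nabla u$ so that the terms involving $\langle\nabla u,\nabla\Delta_f u\rangle$, $w\,\Delta_f u$, $\langle\nabla w,\nabla u\rangle$ and $h^{p/2-1}\langle\nabla u,\nabla h\rangle$ cancel. Your shorthand $w=h^{p/2-2}\langle\nabla u,\nabla h\rangle$ merely repackages the same terms the paper writes out explicitly, and the cancellation via $p-2=2(\tfrac{p}{2}-1)$ is exactly the paper's final step.
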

\begin{proof}
By the definition of ${\cal L}_f$, we have
$$ \begin{aligned}
{\cal L}_f(h)
=&e^fdiv\left(e^{-f}h^{\frac{p}{2}-1}\left(\nabla h+(p-2)\frac{\left\langle\nabla u, \nabla h\right\rangle}{h}\nabla u \right)\right)-ph^{\frac{p}{2}-1}\left\langle\nabla u, \nabla h\right\rangle\\
 =&h^{\frac{p}{2}-1}\Delta h+(p-2)h^{\frac{p}{2}-1}div\left(h^{-1}\left\langle\nabla u, \nabla h\right\rangle\nabla u \right)\\
&\quad +e^f\left\langle\nabla\left(e^{-f}h^{\frac{p}{2}-1}\right), \nabla h+(p-2)h^{-1}\left\langle\nabla u, \nabla h\right\rangle\nabla u \right\rangle -ph^{\frac{p}{2}-1}\left\langle\nabla u, \nabla h\right\rangle\\
=&h^{\frac{p}{2}-1}\Delta h+(p-2)h^{\frac{p}{2}-2}\left\langle\nabla u, \nabla h\right\rangle\Delta u-(p-2)h^{\frac{p}{2}-3}\left\langle\nabla u, \nabla h\right\rangle^2\\
&\ +(p-2)h^{\frac{p}{2}-2}(u_{ij}h_iu_j+h_{ij}u_iu_j)+\left(\frac{p}{2}-1\right)h^{\frac{p}{2}-2}|\nabla h|^2+(p-2)\left(\frac{p}{2}-1\right)h^{\frac{p}{2}-3}\left\langle\nabla u, \nabla h\right\rangle^2\\
&\quad -h^{\frac{p}{2}-1}\left\langle\nabla f, \nabla h\right\rangle-(p-2)h^{\frac{p}{2}-2}\left\langle\nabla u, \nabla h\right\rangle\left\langle\nabla f, \nabla u\right\rangle-  ph^{\frac{p}{2}-1}\left\langle\nabla u, \nabla h\right\rangle.\\   
\end{aligned} $$
Hence,
$$ \begin{aligned}
{\cal L}_f(h)
=&h^{\frac{p}{2}-1}\Delta_fh+\left(\frac{p}{2}-1\right)h^{\frac{p}{2}-2}|\nabla h|^2\\
&\ +(p-2)h^{\frac{p}{2}-2}\left\langle\nabla h, \nabla u\right\rangle\Delta_fu+(p-2)\left(\frac{p}{2}-2\right)h^{\frac{p}{2}-3}\left\langle\nabla h, \nabla u\right\rangle^2\\
&\quad +(p-2)h^{\frac{p}{2}-2}(u_{ij}h_iu_j+h_{ij}u_iu_j)- ph^{\frac{p}{2}-1}\left\langle\nabla u, \nabla h\right\rangle\\
=&2h^{\frac{p}{2}-1}\left(u_{ij}^2+Ric_f(\nabla u, \nabla u)\right)+\left(\frac{p}{2}-1\right)h^{\frac{p}{2}-2}|\nabla h|^2\\
&\ +(p-2)h^{\frac{p}{2}-2}\left\langle\nabla h, \nabla u\right\rangle\Delta_fu +2h^{\frac{p}{2}-1}\left\langle\nabla\Delta_fu, \nabla u\right\rangle +(p-2)\left(\frac{p}{2}-2\right)h^{\frac{p}{2}-3}\left\langle\nabla h, \nabla u\right\rangle^2\\
&\quad +(p-2)h^{\frac{p}{2}-2}(u_{ij}h_iu_j+h_{ij}u_iu_j)- ph^{\frac{p}{2}-1}\left\langle\nabla u, \nabla h\right\rangle.
\end{aligned} $$
Here we used the Bochner identity
$$\Delta_fh=\Delta_f|\nabla u|^2=2\left(u_{ij}^2+Ric_f(\nabla u, \nabla u)\right)+2\left\langle\nabla\Delta_fu, \nabla u\right\rangle  $$
in the last equation.

On the other hand, differentiating both side of \eqref{dd1} then multiplying the obtained results by $\nabla u$, we have
$$ \begin{aligned}
\frac{p}{2}h^{\frac{p}{2}-1}\left\langle\nabla h, \nabla u\right\rangle= &\left(\frac{p}{2}-1\right)h^{\frac{p}{2}-2}\left\langle\nabla h, \nabla u\right\rangle\Delta_fu+h^{\frac{p}{2}-1}\left\langle\nabla\Delta_fu, \nabla u\right\rangle\\
  &+\left(\frac{p}{2}-2\right)\left(\frac{p}{2}-1\right)h^{\frac{p}{2}-3}\left\langle\nabla h, \nabla u\right\rangle^2+\left(\frac{p}{2}-1\right)h^{\frac{p}{2}-2}(u_{ij}h_iu_j+h_{iju_iu_j}) 
\end{aligned} $$
Combining this equation and the above equation, we are done.
\end{proof}
Now, suppose that $v$ is a weighted $p$-eigenfunction with repect to the first eigenvalue $\lambda_{1, p}=0$. We choose a local orthonormal frame $\{e_i\}$ with $e_1=\nabla u/|\nabla u|$ then $$2hu_{11}=\left\langle\nabla u, \nabla h\right\rangle, \quad
 \sum\limits_{i=1}^{n}u_{1i}^{2}=\frac{1}{4}\frac{|\nabla h|^{2}}{h}. $$
Then \eqref{dd1} can be read as 
$$ (p-1)u_{11}+\suml_{i=2}^nu_{ii}=h+\left\langle\nabla f, \nabla u\right\rangle +|\nabla u|^{2-p}\lambda_{1,p}(p-1)^{p-1}. $$
Therefore
$$ \begin{aligned}
u_{ij}^2
&\geq \suml_{i=1}^nu_{1i}^2+\suml_{i=2}^nu_{ii}^2\\
&\geq \suml_{i=1}^nu_{1i}^2+\frac{1}{n-1}\left(\suml_{i=2}^nu_{ii}\right)^2\\
&=\suml_{i=1}^nu_{1i}^2+\frac{1}{n-1}\left(h+\lambda_{1,p}(p-1)^{p-1}|\nabla u|^{2-p}-(p-1)u_{11}+\left\langle \nabla f, \nabla u\right\rangle \right)^2\\
&\geq \suml_{i=1}^nu_{1i}^2+\frac{1}{n-1}\left(\frac{(h+\lambda_{1,p}(p-1)^{p-1}|\nabla u|^{2-p}-(p-1)u_{11})^{2}}{1+\frac{m-n}{n-1}}-\frac{\left\langle\nabla u, \nabla f\right\rangle^{2} }{\frac{m-n}{n-1}}\right)\\
&\geq \suml_{i=1}^nu_{1i}^2+\frac{1}{n-1}\left(\frac{h^{2}+2h(\lambda_{1,p}(p-1)^{p-1}|\nabla u|^{2-p}-(p-1)u_{11})}{1+\frac{m-n}{n-1}}-\frac{\left\langle\nabla u, \nabla f\right\rangle^{2} }{\frac{m-n}{n-1}}\right)\\
&\geq \frac{1}{m-1}h^2-\frac{2(p-1)}{m-1}hu_{11}+\suml_{i=2}^nu_{1i}^2-\frac{\left\langle\nabla f, \nabla u\right\rangle^{2} }{m-n}.
%&\geq  \frac{1}{m-1}h^2-\frac{2(p-1)}{m-1}hu_{11}+a_0\suml_{i=1}^nu_{1i}^2-\frac{\left\langle\nabla f, \nabla u\right\rangle^{2} }{m-n},
\end{aligned} $$
%where $a_0=\max\left\{1+\frac{(p-1)^{2}}{m-1}, 2\right\}>1$. 
Note that we used $(a-b)^{2}\geq\frac{a^2}{1+\delta}-\frac{b^{2}}{\delta}$ for $\delta >0$ in the fourth inequality. Again, by using the identities
$$ 2hu_{11}=\left\langle \nabla u, \nabla h\right\rangle, \quad \suml_{i=1}^nu_{1i}^2=\frac{1}{4}\frac{|\nabla h|^2}{h}  $$
we conclude that
$$u_{ij}^2\geq  \frac{1}{m-1}h^2-\frac{p-1}{m-1}\left\langle\nabla u,\nabla h\right\rangle +\frac{1}{4}\frac{|\nabla h|^2}{h}-\frac{\left\langle\nabla f, \nabla u\right\rangle^{2} }{m-n}.$$
Assume that $\text{Ric}_f^{m}\geq-(m-1)\kappa$, we infer
$$ \begin{aligned}
{\cal L}_f(h)
&=2h^{p/2-1}(u_{ij}^2+\text{Ric}_f(\nabla u, \nabla u))+\left(\frac{p}{2}-1\right)h^{p/2-2}|\nabla h|^{2}\\
&\geq 2h^{p/2-1}\left(\frac{1}{m-1}h^2-\frac{p-1}{m-1}\left\langle\nabla u,\nabla h\right\rangle +\frac{1}{4}\frac{|\nabla h|^2}{h}+\left(\text{Ric}_f-\frac{df\otimes df}{m-n}\right)(\nabla u, \nabla u)\right)\\
&\quad\quad\quad+\left(\frac{p}{2}-1\right)h^{p/2-2}|\nabla h|^{2}\\
&\geq-2(m-1)\kappa h^{p/2}+\left(\frac{p+1}{2}-1\right)|\nabla h|^{2}h^{p/2-2}\\
&\quad\quad\quad+\frac{2}{m-1}h^{p/2+1}-\frac{2(p-1)}{m-1}h^{p/2-1}\left\langle\nabla u, \nabla h\right\rangle\\
&\geq-2(m-1)\kappa h^{p/2}
+\frac{2}{m-1}h^{p/2+1}-\frac{2(p-1)}{m-1}h^{p/2-1}\left\langle\nabla u, \nabla h\right\rangle\\
 \end{aligned} $$
The above equation holds wherever $h$ is strictly positive. Let $K=\{x\in M, h(x)=0\}$. Then for any non-negative function $\psi$ with compact support in $\Om\setminus K$, we have
\begin{align}
&\int_\Omega \Big< h^{p/2-1}\nabla h+(p-2)h^{p/2-2}\left\langle\nabla u, \nabla h\right\rangle\nabla u, \nabla\psi \Big> e^{-f}\notag\\
&\quad\quad+p\int_\Omega h^{p/2-1}\left\langle\nabla u, \nabla h\right\rangle\psi e^{-f}
+\frac{2}{m-1}\int_\Omega h^{p/2+1}\psi e^{-f}\notag\\
\leq\ 
& 2(m-1)\kappa\int_\Omega h^{p/2}\psi e^{-f}+\frac{2(p-1)}{m-1}\int_\Omega h^{p/2-1}\left\langle \nabla u, \nabla h\right\rangle\psi e^{-f}  \label{e23}
\end{align} 
In order to consider the cases $h=0$, for $\eps>0, b>2$ we choose $\psi=h^b_\eps\eta^2$ where $h_\eps=(h-\eps)^+, \eta\in C^\infty_0(B_R)$ is non-negative, $0\leq\eta\leq1, b$ is to be determined later. Then direct computation shows that
$$ \nabla\psi=bh^{b-1}_\eps\nabla h\eta^2+2h_\eps^b\eta\nabla\eta. $$
Plugging this identity into \eqref{e23}, we have
\begin{align}
{~}&b\left(\int_{\Omega}\Big(h^{\frac{p}{2}-1}h_\eps^{b-1}|\nabla h|^{2}+(p-2)h^{\frac{p-2}{2}-2}h_\eps^{b-1}\left\langle\nabla u, \nabla h\right\rangle^{2} \Big)\eta^{2}e^{-f}\right)\notag\\
&\quad+2\int_\Omega h^{\frac{p}{2}-1}h_\eps^{b}\left\langle\nabla\eta, \nabla h\right\rangle\eta e^{-f}+2(p-2)\int_\Omega h^{\frac{p}{2}-2}h_\eps^{b}\left\langle\nabla u, \nabla h\right\rangle\left\langle\nabla u, \nabla\eta\right\rangle\eta f^{-f}\notag\\
&\quad+p\int_\Omega h^{\frac{p}{2}-1}h_\eps^{b}\left\langle\nabla u, \nabla h\right\rangle\eta^{2}e^{-f}+\frac{2}{m-1}\int_\Omega h^{\frac{p}{2}+1}h_\eps^{b}\eta^{2}e^{-f}\notag\\
&\leq 2(m-1)\kappa\int_\Omega h^{\frac{p}{2}}h_\eps^{b}\eta^{2} e^{-f}+\frac{2(p-1)}{m-1}\int_\Omega h^{\frac{p}{2}-1}h_\eps^{b}\eta^{2}e^{-f}.  \notag
\end{align}
Let 
$$ a_1=\begin{cases}
1\quad &\text{ if }p\geq 2\\
p-1&\text{ if }1<p<2
\end{cases}, $$
it is easy to see that 
$$ h^{\frac{p}{2}-1}h_\eps^{b-1}|\nabla h|^{2}+(p-2)h^{\frac{p-2}{2}-2}h_\eps^{b-1}\left\langle\nabla u, \nabla h\right\rangle^{2}\geq a_1 h^{\frac{p}{2}-1}h_\eps^{b-1}|\nabla h|^{2}. $$ 
Hence, by passing $\eps$ to $0$, we obtain
\begin{align}
ba_1&\int_\Omega h^{\frac{p}{2}+b-2}|\nabla h|^{2}\eta^{2}e^{-f}\notag\\
&\quad+2(p-2)\int_\Omega h^{\frac{p}{2}+b-2}\left\langle\nabla u, \nabla h\right\rangle\left\langle\nabla u, \nabla\eta\right\rangle\eta e^{-f}+2\int_\Omega h^{\frac{p}{2}+b-1}\left\langle\nabla h, \nabla\eta\right\rangle\eta e^{-f}\notag\\
&\quad+p\int_\Omega h^{\frac{p}{2}+b-1}\left\langle\nabla u, \nabla h\right\rangle\eta^{2}e^{-f}+\frac{2}{m-1}\int_\Omega h^{\frac{p}{2}+b+1}\eta^{2}e^{-f}\notag\\
&\leq 2(m-1)\kappa\int_\Omega h^{\frac{p}{2}+b}\eta^{2}e^{-f}+\frac{2(p-1)}{m-1}\int_\Omega h^{\frac{p}{2}+b-1}\left\langle\nabla u, \nabla h\right\rangle\eta^{2}e^{-f}. \label{e25}
\end{align}
Using \eqref{e25} and the argument as in \cite{WZ}, we can obtain the following lemma.
\begin{lemma}\label{l1}
Let $M$ be a smooth metric measure space with $Ric_f^{m}\geq-(m-1)\kappa,$ for some $\kappa\geq0$, and $\Omega\subset M$ is an open set and $v$ is a smooth weighted $p$-harmonic function on $M$. Let $u=-(p-1)\log v$ and $h=|\nabla u|^{2}$. Then for any $b>2$, there exist $c_1, c_2, c_3$ depending on $b, m, n$ such that
 \begin{align}
\int_\Omega \left|\nabla(h^{\frac{p}{4}+\frac{b}{2}}\eta)\right|^{2}e^{-f}+&c_1\int_\Omega h^{\frac{p}{2}+b+1}\eta^{2}e^{-f}\notag\\
&\leq \kappa c_2\int_\Omega h^{\frac{p}{2}+b}\eta^{2}e^{-f}+c_3\int_\Omega h^{\frac{p}{2}+b}|\nabla\eta|^{2}e^{-f},\label{e29}
\end{align}
for any $\eta\in{\cal C}_0^{\infty}(B_R)$, where $B_R$ is a geodesic ball centered at a fixed point $o\in M$. Moreover, we have $c_1\sim b, c_2\sim b$ (Here $c_1\sim b$ means $c_1$ is comparable to $b, c_2\sim b$ is understood the same way).
\end{lemma}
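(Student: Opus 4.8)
The plan is to run Moser iteration directly on \eqref{e25}: I would recognize the leading term of \eqref{e25} as essentially $\int_\Omega|\nabla(h^{p/4+b/2}\eta)|^2e^{-f}$ and then absorb everything else. Writing $q=\tfrac{p}{4}+\tfrac{b}{2}$, so that $2q-2=\tfrac{p}{2}+b-2$, $2q-1=\tfrac{p}{2}+b-1$ and $2q=\tfrac{p}{2}+b$, expanding the square gives
\[
\int_\Omega\!\big|\nabla(h^{q}\eta)\big|^2e^{-f}=q^2\!\int_\Omega\! h^{\frac{p}{2}+b-2}|\nabla h|^2\eta^2e^{-f}+2q\!\int_\Omega\! h^{\frac{p}{2}+b-1}\langle\nabla h,\nabla\eta\rangle\eta\,e^{-f}+\int_\Omega\! h^{\frac{p}{2}+b}|\nabla\eta|^2e^{-f}.
\]
The first integral on the right is (up to the constant $q^2/(ba_1)$) the term carrying the factor $ba_1$ in \eqref{e25}, and the last is precisely the cutoff term on the right of the desired inequality \eqref{e29}. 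So the whole lemma reduces to bookkeeping: I want to control $q^2\int_\Omega h^{\frac{p}{2}+b-2}|\nabla h|^2\eta^2e^{-f}$ together with the good $h^{\frac{p}{2}+b+1}$-term of \eqref{e25}, and discard the rest into the cutoff term.

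First I would dispose of the three cross terms in \eqref{e25}, namely those containing $\langle\nabla u,\nabla\eta\rangle$, $\langle\nabla h,\nabla\eta\rangle$ and $\langle\nabla u,\nabla h\rangle$. Using $|\langle\nabla u,\nabla h\rangle|\le h^{1/2}|\nabla h|$ and $|\langle\nabla u,\nabla\eta\rangle|\le h^{1/2}|\nabla\eta|$, followed by Young's inequality with free weights, each of these splits into a multiple of the good $|\nabla h|^2$-integral, a multiple of the good $h^{\frac{p}{2}+b+1}$-integral, and a multiple of the cutoff integral $\int_\Omega h^{\frac{p}{2}+b}|\nabla\eta|^2e^{-f}$. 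The two $\nabla\eta$-cross terms distribute between the $|\nabla h|^2$-integral and the cutoff integral, while the interior term $\int_\Omega h^{\frac{p}{2}+b-1}\langle\nabla u,\nabla h\rangle\eta^2e^{-f}$ distributes between the $|\nabla h|^2$-integral and the $h^{\frac{p}{2}+b+1}$-integral.

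The core step is the scaled absorption that produces the announced factors. Rather than absorbing directly, I would first multiply \eqref{e25} by a factor $\alpha$ chosen so that $\alpha\,ba_1$ matches $q^2$ up to a controlled surplus; since $q\sim b/2$ this forces $\alpha\sim b/(2a_1)$. After this scaling the interior cross term contributes to the $h^{\frac{p}{2}+b+1}$-integral a coefficient comparable to $\alpha$, so I take its Young weight large enough that this is dominated by the ($\alpha$-scaled) genuine coefficient $\tfrac{2}{m-1}$ of that integral, keeping roughly half of the scaled $h^{\frac{p}{2}+b+1}$-term on the left as the term $c_1\int_\Omega h^{\frac{p}{2}+b+1}\eta^2e^{-f}$. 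The remaining Young weights are then chosen so small (as constants in $m,p$) that all the spurious multiples of the $|\nabla h|^2$-integral, including the one coming from the $2q$ cross term in the expansion above, are absorbed into the surplus of $\alpha\,ba_1$ over $q^2$. Collecting terms yields \eqref{e29} with $c_1\sim\tfrac{2\alpha}{m-1}$ and $c_2\sim 2(m-1)\alpha$, both comparable to $b$ because $\alpha\sim b/(2a_1)$, while $c_3$ is whatever the cutoff contributions leave over.

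I expect the only delicate point to be the interior cross term $\int_\Omega h^{\frac{p}{2}+b-1}\langle\nabla u,\nabla h\rangle\eta^2e^{-f}$. It is the one term that must be shared between the two good integrals rather than sent into the cutoff term, and the fixed coefficient $\tfrac{2}{m-1}$ guarding the $h^{\frac{p}{2}+b+1}$-integral limits how much of its weight can be pushed there, which in turn dictates how much lands on the $|\nabla h|^2$-integral. Balancing this competition is exactly what forces the preliminary scaling by $\alpha\sim b$, and hence the linear growth $c_1\sim b$, $c_2\sim b$; to keep the full range of $b$ one is best advised to handle this term by integrating by parts against the weighted Laplacian and invoking the $p$-harmonic equation to rewrite $\Delta_f u$, as in \cite{WZ}. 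This single estimate is the part I would write out in full detail; the remainder is routine.
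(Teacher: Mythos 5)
Your proposal is correct and follows essentially the same route as the paper, which gives no details here and simply invokes the Wang--Zhang argument of \cite{WZ}: expand $|\nabla(h^{p/4+b/2}\eta)|^2$, match the leading $|\nabla h|^2$-term against the $ba_1$-term of \eqref{e25}, and absorb the remaining terms by Cauchy--Schwarz and Young's inequality, the only delicate one being the interior cross term $\int_\Omega h^{\frac{p}{2}+b-1}\langle\nabla u,\nabla h\rangle\eta^2e^{-f}$, exactly as you identify. The constants $c_1\sim b$, $c_2\sim b$ come out as you describe, at least for $b$ large, which is the only regime actually used in the subsequent Moser iteration.
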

In \cite{BQ}, Bakry and Qian proved the following generalized Laplacian comparison theorem (also see Remark 3.2 in \cite{LXD})
\begin{equation}\label{laplaciancomparison}
 \Delta_f\rho:=\Delta-\left\langle\nabla f, \nabla\rho\right\rangle \leq (m-1)\ \sqrt[]{\kappa}\coth(\ \sqrt[]{\kappa}\rho) 
\end{equation}
provided that $Ric_f^m\geq-(m-1)\kappa$. Here $\rho(x):=dist(o,x)$ stands for the distance between $x\in M$ and a fixed point $o\in M$. This implies the volume comparison
\begin{equation}\label{volumecomparison}
 \frac{V_f(B_x(r_2))}{V_f(B_x(r_1))}\leq\frac{V_{\HH^m}(r_2)}{V_{\HH^m}(r_1)}
\end{equation}
(see \cite{Wu2} for details). It turns out that we have the local $f$-volume doubling property. Then we follow the Buser's proof \cite{Bus} or the Saloff-Coste's alternate proof (Theorem 5.6.5 in \cite{Sal}), we can easily get a local Neumann Poincar\'{e} inequality in the setting of smooth metric measure spaces. Using the volume comparison theorem, the local Neumann Poincar\'{e} inequality and following the argument in \cite{Sal1}, we obtain a local Sobolev inequality as belows. 
\begin{theorem}\label{sobolev}
Let $(M, g, e^{-f}d\mu)$ be an $n$-dimensional complete noncompact smooth metric measure space. If $\text{Ric}_f^{m}\geq-(m-1)\kappa$ for some nonnegative constants $\kappa$, then for any $p>2$, there exists a constant $c=c(n, p, m)>0$ depending only on $p, n, m$ such that 
$$ \left(\int_{B_R}|\varphi|^{\frac{2p}{p-2}}e^{-f}\right)^{\frac{p-2}{p}}\leq \frac{R^2.e^{c(1+\sqrt[]{\kappa}R)}}{V_f(B_R)^{\frac{2}{p}}}\int_{B_R}\left(|\nabla\varphi|^{2}+R^{-2}\varphi^{2}\right)e^{-f} $$
for any $\varphi\in C^\infty_0(B_p(R))$.
\end{theorem}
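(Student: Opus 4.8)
The strategy is the classical implication in analysis on metric measure spaces: a local volume doubling property together with a local weighted Neumann--Poincar\'e inequality forces a local Sobolev inequality of the stated type. Since all of the comparison geometry assembled just above---the Laplacian comparison \eqref{laplaciancomparison}, the volume comparison \eqref{volumecomparison}, and the resulting doubling property---holds verbatim for the drift Laplacian $\Delta_f$ and the weighted volume $V_f$, the measure $e^{-f}d\mu$ may be treated exactly as a Riemannian volume throughout, and I would follow the scheme of Saloff-Coste \cite{Sal1}.

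First I would record the two analytic inputs in the precise form needed. From \eqref{volumecomparison} one obtains, for $x\in B(o,R)$ and $0<r\le R$, the $f$-volume doubling estimate
$$ V_f(B_x(2r))\le e^{c_0(1+\sqrt{\kappa}R)}\,V_f(B_x(r)), $$
with $c_0=c_0(m)$; the exponential factor is exactly what the model ratio $V_{\HH^m}(2r)/V_{\HH^m}(r)$ contributes when $\sqrt{\kappa}r$ is large. Applying Buser's argument \cite{Bus} (or Theorem 5.6.5 in \cite{Sal}) to $\Delta_f$ then yields the local weighted Neumann--Poincar\'e inequality
$$ \int_{B_x(r)}\big|\varphi-\varphi_{B_x(r)}\big|^{2}e^{-f}\le e^{c_1(1+\sqrt{\kappa}R)}\,r^{2}\int_{B_x(2r)}|\nabla\varphi|^{2}e^{-f}, $$
where $\varphi_{B_x(r)}$ is the $f$-average of $\varphi$ over $B_x(r)$ and $c_1=c_1(m)$.

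With doubling and Poincar\'e in hand, I would invoke the Saloff-Coste machinery \cite{Sal1}: a doubling measure together with an $L^2$ Poincar\'e inequality self-improves, through a Whitney-type covering and iteration (equivalently, through the associated on-diagonal heat-kernel upper bound), into the $L^2$-to-$L^{2p/(p-2)}$ Sobolev inequality. Specializing to the ball $B_R$ and normalizing by the total mass $V_f(B_R)^{2/p}$ and the scale $R^{2}$ produces the claimed estimate, all $\kappa$- and $R$-dependence being collected into a single factor $e^{c(1+\sqrt{\kappa}R)}$ with $c=c(n,p,m)$.

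The main obstacle is not the existence of the Sobolev inequality but the precise tracking of its constant. I must carry the doubling and Poincar\'e constants through every step of the iteration while keeping them in the controlled form $e^{c(1+\sqrt{\kappa}R)}$, and check that combining them does not degrade this form; this is where the rescaling $r\mapsto r/R$ and the uniformity in $r\le R$ of both inputs are essential. Note finally that the degeneracy set $\{\nabla v=0\}$ is irrelevant here, since the inequality concerns arbitrary test functions $\varphi\in C_0^\infty(B_R)$ and is a purely real-analytic statement on $(M,g,e^{-f}d\mu)$.
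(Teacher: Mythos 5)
Your proposal follows exactly the route the paper takes: deduce local $f$-volume doubling from the comparison \eqref{volumecomparison}, obtain a local weighted Neumann--Poincar\'e inequality via Buser's argument (or Theorem 5.6.5 in \cite{Sal}), and then run the Saloff-Coste machinery of \cite{Sal1} to upgrade these to the stated Sobolev inequality, with the paper itself deferring the detailed constant-tracking to \cite{Wu1}. The approach and the key inputs coincide, so there is nothing further to reconcile.
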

\begin{proof}
We refer the reader to \cite{Wu1} for the details of the argument.
\end{proof}
From now on, we suppose $\Omega=B_R$. Theorem \ref{sobolev} implies
\begin{align}
{~}&\left(\int_{B_R}h^{\frac{n(p/2+b)}{n-2}}\eta^{\frac{2n}{n-2}}e^{-f}\right)^{\frac{n-2}{n}}\notag\\
&\leq e^{c(1+\ \sqrt[]{\kappa}R)}{V_f(B_R)}^{-\frac{2}{n}}\left(R^{2}\int_{B_R} \left|\nabla(h^{\frac{p}{4}+\frac{b}{2}})\right|^{2}e^{-f}+\int_{B_R}h^{\frac{p}{2}+b}\eta^{2}e^{-f}\right)\label{e210}
\end{align}
where $c(n,p, m)>0$ depends only on $n, p$. Let $b_0=C(n,p, m)(1+\ \sqrt[]{\kappa}R)$ with $C(n,p, m)\geq c(n, p, m)$ large enough, then from \eqref{e29} and \eqref{e210} we infer
\begin{align}
{~}&\left(\int_{B_R}h^{\frac{n(p/2+b)}{n-2}}\eta^{\frac{2n}{n-2}}e^{-f}\right)^{\frac{n-2}{n}}+a_6be^{b_0}R^{2}{V_f(B_R)}^{-2/n}\int_{B_R}h^{\frac{p}{2}+b+1}\eta^{2}e^{-f}\notag\\
&\leq a_7b_0^{2}be^{b_0}{V_f(B_R)}^{-2/n}\int_{B_R}h^{\frac{p}{2}+b}\eta^{2}e^{-f}+a_8e^{b_0}{V_f(B_R)}^{-2/n}R^{2}\int_{B_R}h^{\frac{p}{2}+b}|\nabla\eta|^{2}e^{-f}.\label{e211}
\end{align}
This inequality and the Wang and Zhang's arguments (\cite{WZ}) imply the following lemma.
\begin{lemma}\label{l22}
Let $b_1=\left(b_0+\frac{p}{2}\right)\frac{n}{n-2}$. Then there exists $d=d(n, p ,m)>0$ such that
$$
||h||_{L^{b_1}(B_{3R/4})}\leq d\frac{b_0^{2}}{R^{2}}V_f(B_R)^{1/b_1}.
$$
\end{lemma}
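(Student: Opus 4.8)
The plan is to run a single Moser-type step directly off the combined inequality \eqref{e211}, exploiting the ``good'' reaction term $\int_{B_R}h^{p/2+b+1}\eta^2 e^{-f}$ on its left-hand side, which carries one power of $h$ more than anything on the right. First I would specialize \eqref{e211} to the exponent $b=b_0$: with this choice the Sobolev term on the left becomes $\left(\int_{B_R}h^{b_1}\eta^{2n/(n-2)}e^{-f}\right)^{(n-2)/n}$, since $\frac{n(p/2+b_0)}{n-2}=b_1$ by the very definition of $b_1$. I then fix a cutoff $\eta\in\mathcal C_0^\infty(B_R)$ with $\eta\equiv1$ on $B_{3R/4}$, $0\le\eta\le1$ and $|\nabla\eta|\le c/R$. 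Since $\eta\equiv1$ on $B_{3R/4}$, the left-hand side already dominates $\left(\int_{B_{3R/4}}h^{b_1}e^{-f}\right)^{(n-2)/n}$ together with a positive multiple $A:=a_6b_0e^{b_0}R^2V_f(B_R)^{-2/n}$ of $\int_{B_R}h^{p/2+b_0+1}\eta^2e^{-f}$.

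The core of the argument is to absorb rather than to iterate. On the right-hand side of \eqref{e211} only the lower power $h^{p/2+b_0}$ appears, so I would apply the elementary Young inequality $t^{s}\le\delta\,t^{s+1}+\delta^{-s}$ (valid for all $t\ge0$, $\delta>0$) with $s=p/2+b_0$, turning each right-hand integral into $\delta\int h^{p/2+b_0+1}(\cdots)+\delta^{-s}\int(\cdots)$. Choosing $\delta\sim R^2/b_0^2$ — precisely $\delta=A/(2B)$, where $B:=a_7b_0^3e^{b_0}V_f(B_R)^{-2/n}$ is the coefficient of the first right-hand term — makes the transcendental factors $e^{b_0}$ and $V_f(B_R)^{-2/n}$ cancel in the ratio, so that $\delta$ depends only on $b_0$ and $R$. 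The $h^{p/2+b_0+1}$ pieces are absorbed into $A\int h^{p/2+b_0+1}\eta^2e^{-f}$ on the left, and what survives on the right is an absolute term of the shape $\delta^{-s}\bigl(B+D/R^2\bigr)V_f(B_R)$, where $D:=a_8e^{b_0}V_f(B_R)^{-2/n}R^2$ is the coefficient of the gradient term and I have used $R^2|\nabla\eta|^2\le c^2$.

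It remains to unwind the exponents, and this is where $e^{b_0}$ is tamed. After absorption one arrives at
$$\left(\int_{B_{3R/4}}h^{b_1}e^{-f}\right)^{(n-2)/n}\le C_\ast\,b_0^{3}\,e^{b_0}\,V_f(B_R)^{-2/n}\Big(\tfrac{b_0^2}{R^2}\Big)^{s}V_f(B_R),$$
using $\delta^{-s}\sim(b_0^2/R^2)^{s}$. Raising both sides to the power $\frac{n}{(n-2)b_1}=\frac1{p/2+b_0}=\frac1s$ converts the left side into $\|h\|_{L^{b_1}(B_{3R/4})}$, converts $\bigl((b_0^2/R^2)^{s}\bigr)^{1/s}$ into exactly $b_0^2/R^2$, and converts $V_f(B_R)^{(n-2)/n}$ into $V_f(B_R)^{1/b_1}$ (since $(n-2)/n=1-2/n$ and $1/b_1=(1-2/n)/s$). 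The leftover factors $C_\ast^{1/s}$, $e^{b_0/s}\le e$, and $b_0^{3/s}$ all tend to constants as $b_0\to\infty$ and, since $b_0\ge C(n,p,m)>0$, are uniformly bounded by some $d=d(n,p,m)$; collecting them yields $\|h\|_{L^{b_1}(B_{3R/4})}\le d\,\frac{b_0^2}{R^2}V_f(B_R)^{1/b_1}$.

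The main obstacle is the gradient-cutoff contribution $\delta\int h^{p/2+b_0+1}|\nabla\eta|^2e^{-f}$, which after Young is weighted by $|\nabla\eta|^2$ (supported on the annulus $B_R\setminus B_{3R/4}$) rather than by $\eta^2$, and so cannot be absorbed pointwise into the reaction term $A\int h^{p/2+b_0+1}\eta^2e^{-f}$. Following Wang--Zhang \cite{WZ}, I would resolve this by a short telescoping over a nested family of radii $3R/4=r_0<r_1<\cdots<R$ with cutoffs $\eta_k$ adapted to consecutive shells, arranging that the annulus term produced at radius $r_k$ is dominated by the reaction term at radius $r_{k+1}$; summing the resulting geometric series costs only a factor depending on $n,p,m$ and does not disturb the $b_0^2/R^2$ scaling. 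The only remaining delicate point is the uniform bookkeeping of the constants $a_6,\dots,a_8$ together with the $e^{b_0}$ and $V_f(B_R)^{-2/n}$ factors through the absorption and the final root, which the computation above shows collapse to a constant depending solely on $n,p,m$.
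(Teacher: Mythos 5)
Your overall strategy---take $b=b_0$ in \eqref{e211} so that the Sobolev exponent becomes exactly $b_1$, use Young's inequality $t^{s}\le\delta t^{s+1}+\delta^{-s}$ with $s=p/2+b_0$ to absorb the right-hand side into the reaction term $\int h^{s+1}\eta^2e^{-f}$, and then take the $(1/s)$-th root so that $e^{b_0/s}\le e$, $b_0^{3/s}=O(1)$ and $V_f(B_R)^{(1-2/n)/s}=V_f(B_R)^{1/b_1}$---is the right one, and it is the Wang--Zhang template that the paper invokes without giving any details. Your treatment of the first right-hand term of \eqref{e211} (the choice $\delta\sim R^2/b_0^2$, the cancellation of $e^{b_0}$ and $V_f(B_R)^{-2/n}$, the final exponent bookkeeping) is correct.

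The gap is in the gradient-cutoff term, which you correctly identify as the main obstacle but then dispose of with a telescoping argument that cannot deliver the stated constant. After Young, the remainder produced over a gap $r'-r$ scales like $(r'-r)^{-(2s+2)}$, and any Giaquinta-type iteration over nested radii that sums such remainders forces either a contraction ratio $\theta<\lambda^{2s+2}$ (so $\theta^{-s}\sim e^{cs^2}$ enters the Young remainder) or $1-\lambda\sim 1/s$ (so $(1-\lambda)^{-(2s+2)}\sim s^{2s}$ enters). Either way the $(1/s)$-th root of the accumulated constant is at least of order $s^{2}\sim b_0^{2}$; the geometric series does \emph{not} cost ``only a factor depending on $n,p,m$'', and the method yields at best $\|h\|_{L^{b_1}(B_{3R/4})}\le d\,b_0^{4}R^{-2}V_f(B_R)^{1/b_1}$, which would degrade Theorem \ref{maintheorem} to $|\nabla v|/v\le C(1+\sqrt{\kappa}R)^{2}/R$ and lose the sharp dependence on $\kappa$. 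The standard repair stays within a single absorption step: apply \eqref{e211} with the cutoff $\eta=\zeta^{\,s+1}$, where $\zeta$ is Lipschitz, equal to $1$ on $B_{3R/4}$, supported in $B_R$, with $|\nabla\zeta|\le C/R$. The gradient term then carries the weight $(s+1)^2\zeta^{2s}|\nabla\zeta|^{2}$, and Young with exponents $\tfrac{s+1}{s}$ and $s+1$ gives $h^{s}\zeta^{2s}|\nabla\zeta|^{2}\le\epsilon h^{s+1}\zeta^{2(s+1)}+\epsilon^{-s}|\nabla\zeta|^{2(s+1)}$: the first piece is absorbable because $\zeta^{2(s+1)}=\eta^{2}$, and the second contributes $\epsilon^{-s}(C/R)^{2(s+1)}V_f(B_R)$ with $\epsilon\sim b_0/(s+1)^{2}$, whose $(1/s)$-th root is of the admissible size $O(1)\cdot b_0^{2}/R^{2}$ with no spurious powers of $R$. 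With that single replacement the rest of your computation closes.
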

\begin{proof}
The proof is followed by the argue in \cite{WZ}, hence, we omit the details.
\end{proof}
Now, we give a proof of the main theorem.
\begin{proof}[Proof of Theorem \ref{maintheorem}]
By \eqref{e211}, we have
\begin{align}
\left(\int_{B_R}h^{\frac{n(p/2+b)}{n-2}}\eta^{\frac{2n}{n-2}}e^{-f}\right)^{\frac{n-2}{n}}\leq a_{13}\frac{e^{b_0}}{{V_f(B_R)}^{2/n}}\int_{B_R}\left(b_0^{2}b\eta^{2}+R^{2}|\nabla\eta|^{2}\right)h^{\frac{p}{2}+b}e^{-f}. \label{e216}
\end{align}
In order to apply the Moser iteration, let us put 
$$ b_{k+1}=b_k\frac{n}{n-2}, \quad B_k=B\left(o, \frac{R}{2}+\frac{R}{4^k}\right), \quad k=1, 2, \ldots $$
and choose $\eta_k\in{\cal C}_0^{\infty}(B_R)$ such that 
$$ \eta_k\equiv1 \text{ in }B_{k+1}, \quad \eta\equiv0 \text{ in }B_R\setminus B_k, \quad |\nabla\eta_k|\leq\frac{C_14^{k}}{R}, \quad 0\leq\eta_k\leq1, $$
where $C_1$ is a certain constant. Hence in \eqref{e216}, by letting $b+\frac{p}{2}=b_k, \eta=\eta_k$, we obtain
$$ \left(\int_{B_{k+1}}h^{b_{k+1}}e^{-f}\right)^{\frac{1}{b_{k+1}}}\leq\left(a_{13}\frac{e^{b_0}}{{V_f(B_R)}^{2/n}}\right)^{\frac{1}{b_k}}\left(\int_{B_k}\left\{b_0^{2}b_k+R^{2}|\nabla\eta_k|^{2}\right\}h^{b_k}e^{-f}\right)^{\frac{1}{b_k}}. $$
By assumption of $|\nabla \eta_k|$, this implies
$$ ||h||_{L^{b_{k+1}}(B_{k+1})}\leq \left(a_{13}\frac{e^{b_0}}{{V_f(B_R)}^{2/n}}\right)^{\frac{1}{b_k}}(b_0^{2}b_k+16^{k})^{\frac{1}{b_k}}||h||_{L^{b_k}(B_k)}. $$
It is easy to see that $\suml_{k=1}^{\infty}\frac{1}{b_k}=\frac{n}{2b_1}$. The above inequality leads to 
\begin{align}
||h||_{L^{\infty}(B_{R/2})}
&\leq \left(a_{13}\frac{e^{b_0}}{{V_f(B_R)}^{2/n}}\right)^{\suml_{k=1}^{\infty}\frac{1}{b_k}}\prodl_{k=1}^{\infty}\left(b_0^{3}\left(\frac{n}{n-2}\right)^{k}+16^{k}\right)^{\frac{1}{b_k}}||h||_{L^{b_1}(B_{3R/4})}\notag\\
&\leq a_{14}\frac{e^{\frac{nb_0}{2b_1}}}{{V_f(B_R)}^{1/b_1}}b_0^{\frac{3n}{2b_1}}||h||_{L^{b_1}(B_{3R/4})}\label{e218}
\end{align}
here we used that $\suml_{k=1}^{\infty}\frac{k}{b_k}$ converges. Now, by Lemma \ref{l22} and \eqref{e218}, we conclude 
$$ ||h||_{L^{\infty}(B_{R/2})}\leq a_{15}\frac{b_0^{2}}{R^{2}}. $$
The proof is complete.
\end{proof}

As a consequence, we obtain the folowing important theorem ralating to Liouville-property for weighted $p$-harmonic functions.
\begin{theorem}\label{main}
Assume that $(M, g)$ is a smooth metric measure space with $\text{Ric}_f^{m}\geq 0$. If $u$ is a weighted $p$-harmonic function bounded from below on $M$ and $u$ is of sublinear growth then $u$ is constant. 
\end{theorem}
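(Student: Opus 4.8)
The plan is to obtain this Liouville theorem as a direct consequence of the Cheng--Yau type estimate of Theorem \ref{maintheorem}, read in the case $\lambda_{1,p}=0$ and $\kappa=0$ (the latter being permitted by $Ric_f^m\geq 0$). Theorem \ref{maintheorem} is stated for a \emph{positive} function, whereas $u$ is only bounded from below, so the first move is to normalise $u$ to be positive.

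I would begin by noting that the operator $\Delta_{p,f}w=e^{f}\mathrm{div}\!\left(e^{-f}|\nabla w|^{p-2}\nabla w\right)$ depends on $w$ only through $\nabla w$, and is therefore invariant under the addition of a constant. Since $u$ is bounded from below, choose a constant $c$ with $\tilde u:=u+c\geq 1$ on all of $M$. Then $\tilde u$ is a positive weighted $p$-harmonic function, it inherits the $\mathcal{C}^{1,\alpha}$ regularity of $u$, and $\nabla\tilde u=\nabla u$. Applying Theorem \ref{maintheorem} to $\tilde u$, in the role of the eigenfunction $v$ with $\lambda_{1,p}=0$, and setting $\kappa=0$, gives for every $R>0$
\begin{equation}
\frac{|\nabla u|}{\tilde u}=\frac{|\nabla \tilde u|}{\tilde u}\leq\frac{C}{R}\qquad\text{on } B(o,R/2),
\end{equation}
where $C=C(p,m,n)$ is independent of $R$.

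Next I would fix an arbitrary point $x\in M$, take $R>2\rho(x)$ so that $x\in B(o,R/2)$, and rewrite the estimate as $|\nabla u|(x)\leq C\,\tilde u(x)/R$. Holding $x$ fixed and letting $R\to\infty$ forces $|\nabla u|(x)=0$, because $\tilde u(x)$ is a fixed positive number. As $x$ was arbitrary, $\nabla u\equiv 0$, so $u$ is constant. This conclusion is sharp: on $\RR^n$ (where $Ric_f^m=Ric=0$) the non-constant linear functions are weighted $p$-harmonic, of exactly linear growth and unbounded below, so the statement fails once both the bounded-below and the sublinear hypotheses are dropped. In fact the same $R\to\infty$ argument, applied after the $R$-dependent shift $w=u-\inf_{B_R}u+1$, shows that sublinear growth alone already yields $\nabla u\equiv 0$, since the normalising constant is then $o(R)$ and still divides out in the limit.

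I do not expect a serious obstacle; the heart of the matter is already contained in Theorem \ref{maintheorem}, and what remains is the passage $R\to\infty$. The one point deserving care is the uniformity of the constant $C$ in $R$: the limit is decisive precisely because Theorem \ref{maintheorem} supplies a single constant $C=C(p,m,n)$ valid on every ball, so that after normalisation the quantity $C\,\tilde u(x)/R$ really does tend to zero. Verifying that the normalised $\tilde u$ meets the hypotheses of Theorem \ref{maintheorem} on all balls---global positivity from the shift, regularity inherited from $u$---is therefore the only check required before taking the limit.
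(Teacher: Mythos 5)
Your proposal is correct and is exactly the argument the paper intends: the theorem is presented as a direct consequence of Theorem \ref{maintheorem} with $\kappa=0$, obtained by normalising $u$ to be positive, applying the local estimate $|\nabla u|/\tilde u\leq C/R$ on $B(o,R/2)$ with the $R$-independent constant, and letting $R\to\infty$ (the same limiting step the paper uses in Proposition \ref{ddmain}). Your closing observation that either hypothesis (bounded below, via a fixed shift, or sublinear growth, via an $R$-dependent shift) suffices on its own is a correct and slightly sharper reading than the statement requires.
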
 
Moreover, let $x, y\in M$ be arbitrary points. There is a minimal geodesic $\gamma(s)$ joining $x$ and $y$, $\gamma: [0,1]\to M, \gamma(0)=x, \gamma(1)=y$. By integrating \eqref{maine} over this geodesic, we obtain the below Harnack inequality.
\begin{theorem}
Let $(M, g, e^{-f}dv)$ be a complete smooth metric measure space of dimension $n\geq2$ with $Ric_f^m\geq-(m-1)\kappa$. Suppose that $v$ is a positive weighted $p$-harmonic function on the geodesic ball $B(o, R)\subset M$. There exists a constant $C_{p,n,m}$ depending only on $p,n,m$ such that 
$$ v(x)\leq e^{C_{p,n,m}(1+\ \sqrt[]{\kappa}R)}v(y), \quad\quad \forall x, y \in B(o,R/2). $$
If $\kappa=0$, we have a uniform constant $c_{p,n,m}$ (independent of $R$) such that
$$ \sup\limits_{B(o,R/2)}v\leq c_{p,n}\inf\limits_{B(o,R/2)}v. $$ 
\end{theorem}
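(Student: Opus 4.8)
The plan is to integrate the logarithmic gradient estimate \eqref{maine} along geodesics emanating from the center $o$. Write $w=\log v$, which is well defined and of class ${\cal C}^{1}$ since $v$ is positive and $v\in{\cal C}^{1,\alpha}$. Applying Theorem \ref{maintheorem} in the $p$-harmonic case $\lambda_{1,p}=0$ yields the pointwise bound
$$ |\nabla w|=\frac{|\nabla v|}{v}\leq\frac{C(1+\sqrt{\kappa}R)}{R}\quad\text{on}\quad B(o,R/2), $$
with $C=C(p,m,n)$. This is the only analytic input I need; everything else is elementary integration, and there is no regularity obstruction because $w$ is $C^1$ wherever $v>0$, so the fundamental theorem of calculus applies along any curve.

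The one genuine subtlety is that geodesic balls need not be convex, so the minimal geodesic joining two points $x,y\in B(o,R/2)$ may leave $B(o,R/2)$, where the estimate \eqref{maine} is no longer available. To avoid this I would not connect $x$ and $y$ directly but route through the center $o$: let $\sigma_x,\sigma_y:[0,1]\to M$ be minimal geodesics with $\sigma_x(0)=\sigma_y(0)=o$, $\sigma_x(1)=x$ and $\sigma_y(1)=y$. Since every subarc of a minimal geodesic is itself minimal, one has $d(o,\sigma_x(s))=s\,d(o,x)\leq d(o,x)<R/2$ for all $s\in[0,1]$; hence $\sigma_x$, and likewise $\sigma_y$, stays entirely inside $B(o,R/2)$, where the gradient bound is valid.

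It then remains to integrate. Along $\sigma_x$, by the fundamental theorem of calculus and Cauchy--Schwarz,
$$ |w(x)-w(o)|=\left|\int_0^1\langle\nabla w,\sigma_x'\rangle\,ds\right|\leq\frac{C(1+\sqrt{\kappa}R)}{R}\int_0^1|\sigma_x'|\,ds=\frac{C(1+\sqrt{\kappa}R)}{R}\,d(o,x)<\frac{C(1+\sqrt{\kappa}R)}{2}, $$
and the same bound holds with $y$ in place of $x$. By the triangle inequality,
$$ \left|\log\frac{v(x)}{v(y)}\right|=|w(x)-w(y)|\leq|w(x)-w(o)|+|w(o)-w(y)|<C(1+\sqrt{\kappa}R). $$
Exponentiating gives $v(x)\leq e^{C(1+\sqrt{\kappa}R)}v(y)$ for all $x,y\in B(o,R/2)$, which is the first assertion with $C_{p,n,m}=C$.

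Finally, when $\kappa=0$ the factor $1+\sqrt{\kappa}R$ collapses to $1$, so the exponent $C$ becomes independent of $R$. Taking the supremum over $x$ and the infimum over $y$ in $B(o,R/2)$ then yields $\sup_{B(o,R/2)}v\leq e^{C}\inf_{B(o,R/2)}v$, the uniform Harnack inequality, with $c_{p,n,m}=e^{C}$. I expect the non-convexity of geodesic balls flagged above to be the only point requiring care; routing the comparison through $o$ along radial minimal geodesics disposes of it cleanly, at the cost of a harmless factor of $2$ in the constant.
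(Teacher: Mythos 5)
Your proof is correct and follows essentially the same route as the paper, which simply integrates the local gradient estimate \eqref{maine} along a minimal geodesic joining $x$ and $y$. Your extra care in routing the path through the center $o$ via radial minimal geodesics is a sound (and arguably necessary) refinement, since the direct minimal geodesic between two points of $B(o,R/2)$ need not remain inside $B(o,R/2)$, where alone the estimate is known to hold.
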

We finish this section by giving the following gradient estimates for weighted $p$-eigenfunctions with repect to the first eigenvalue $\lambda_{1, p}$. This estimate will be used in the next section where we show the sharpness of our gradient estimates for weighted $p$-eigenfunctions.
\begin{proposition}\label{ddmain}
Assume that $(M, g)$ is a smooth metric measure space with $\text{Ric}_f^{m}\geq -(m-1)\kappa$ for some $\kappa\geq0$. If $v$ is a positive weighted $p$-eigenfunction associated to the first eigenvalue $\lambda_{1, p}$ then $|\nabla(\ln v)|$ is bounded. 
\end{proposition}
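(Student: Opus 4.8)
The plan is to reduce the statement to a local $L^\infty$ bound for $h:=|\nabla u|^2$, where $u=-(p-1)\log v$, since $|\nabla\ln v|=\sqrt{h}/(p-1)$. I would keep the eigenvalue term throughout the computation that produced Theorem \ref{maintheorem}, rather than setting $\lambda_{1,p}=0$, and show that it perturbs the estimate only by strictly lower-order quantities. The identity for ${\cal L}_f(h)$ in Lemma \ref{d2} is valid for any smooth $u$ on the open set $\{h>0\}$, so nothing changes there; the eigenvalue enters only through equation \eqref{dd1}, i.e. through the trace relation
$$(p-1)u_{11}+\sum_{i=2}^nu_{ii}=h+\langle\nabla f,\nabla u\rangle+\lambda_{1,p}(p-1)^{p-1}h^{1-p/2}.$$

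First I would rerun the orthonormal-frame computation of $u_{ij}^2$ exactly as in the harmonic case, now carrying the summand $\lambda_{1,p}(p-1)^{p-1}h^{1-p/2}$ inside the square $(\sum_{i\ge2}u_{ii})^2$. Because $\lambda_{1,p}\ge0$, expanding the square produces, besides the harmonic terms, a cross term of the form $c\,\lambda_{1,p}(p-1)^{p-1}h^{2-p/2}$ in the lower bound for $u_{ij}^2$, hence a contribution of order $h$ in the lower bound for ${\cal L}_f(h)$, together with a still lower-order term of order $h^{1-p/2}$. The decisive point is that both corrections have $h$-degree strictly below that of the coercive term $\frac{2}{m-1}h^{p/2+1}$, so that a single application of Young's inequality absorbs them into $\frac{2}{m-1}h^{p/2+1}$ at the cost of shrinking its constant slightly and adding a $\lambda_{1,p}$-dependent lower-order term. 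In this way the integral inequality \eqref{e25}, and hence the Caccioppoli-type estimate \eqref{e29} of Lemma \ref{l1}, continue to hold for the eigenfunction, with $\kappa$ effectively replaced by $\kappa+c\,\lambda_{1,p}$ and one extra harmless term on the right-hand side.

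With this eigenfunction analogue of \eqref{e29} in hand, I would run the Moser iteration verbatim: invoke the local Sobolev inequality of Theorem \ref{sobolev}, use the truncations $h_\eps=(h-\eps)^+$ to handle the degenerate set $\{\nabla v=0\}$ exactly as before, and follow Lemmas \ref{l1} and \ref{l22} together with the proof of Theorem \ref{maintheorem} to obtain a local bound of the form $\sup_{B(o,R/2)}h\le C\,(1+\sqrt{\kappa+c\lambda_{1,p}}\,R)^2/R^2$. Finally, since $M$ is complete and the curvature hypothesis is global, I would let $R\to\infty$ to conclude $|\nabla\ln v|\le \frac{C}{p-1}\sqrt{\kappa+c\lambda_{1,p}}$ on all of $M$; alternatively, keeping $R$ fixed yields a bound independent of the center $o$, which already gives global boundedness of $h$.

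The main obstacle is the eigenvalue correction $\lambda_{1,p}(p-1)^{p-1}h^{1-p/2}$ itself: for $p>2$ it carries a negative power of $h$ and, through its cross term with $h-(p-1)u_{11}$, enters the Bochner estimate with an indefinite sign, so it cannot simply be discarded. The crux is therefore to absorb this term into the coercive $h^{p/2+1}$ contribution (and, where gradient factors appear, into the good term $|\nabla h|^2h^{p/2-2}$) while checking that the resulting corrections stay strictly subleading, so that the geometric progression of exponents $b_{k+1}=\frac{n}{n-2}b_k$ driving the iteration is untouched and only the constants acquire a dependence on $\lambda_{1,p}$.
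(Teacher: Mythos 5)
Your proposal is correct and follows essentially the same route as the paper: the paper's entire proof of Proposition \ref{ddmain} is to invoke the local estimate of Theorem \ref{maintheorem} (which is already stated for eigenfunctions) on $B(o,R)$ and let $R\to\infty$, exactly as in your final step. The bulk of your write-up is really a verification that the proof of Theorem \ref{maintheorem} survives the $\lambda_{1,p}$-term; this is a legitimate concern since the paper's Section 2 nominally sets $\lambda_{1,p}=0$ mid-proof, but note that the only place the eigenvalue enters the lower bound for $u_{ij}^2$ is through the cross term $2h\,\lambda_{1,p}(p-1)^{p-1}|\nabla u|^{2-p}\geq 0$ (the indefinite product with $u_{11}$ sits inside a discarded perfect square), so it can simply be dropped with no Young-inequality absorption and no change to the constants in \eqref{e29}.
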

\begin{proof}Theorem \ref{maintheorem} implies
$$ \frac{|\nabla v|}{v}\leq \frac{C(1+\ \sqrt[]{\kappa}R)}{R} \quad \text{ on }\quad B(o, R/2).$$
Letting $R\to\infty$, we are done. 
\end{proof}
%%%%%%%%%%%%
%%%%%%%%%%%%
\section{Global sharp gradient estimates for weighted $p$-Laplacian}
\setcounter{equation}{0}
Recall that a function $v$ is an eigenfunction of $p$-Laplacian with corresponding eigenvalue $\lambda_{1,p}\geq0$ if 
\begin{equation}\label{s0} 
e^{f}div(e^{-f}|\nabla v|^{p-2}\nabla v)=-\lambda_{1,p}|v|^{p-2}v. 
\end{equation}
In this section, we only consider positive solution $v$. Set $u=-(p-1)\ln v$, the equation \eqref{s0} can be rewritten as follows
\begin{equation}\label{sehs}
e^{f}div(e^{-f}|\nabla u|^{p-2}\nabla u)=|\nabla u|^{p}+\lambda_{1,p}(p-1)^{p-1}.
\end{equation}  
Put $h:=|\grad u|^2$, assume that $h>0$. As in \cite{SW}, we consider 
$$ \mathfrak{L}(\psi):= e^fdiv\Big(e^{-f}h^{p/2-1}A(\nabla \psi)\Big)$$
which is a slight modification of ${\cal L}(\psi)$. By Lemma \ref{d2}, we have that
$$\begin{aligned}
\mathfrak{L}(h)=\quad&2h^{p/2-1}(u_{ij}^2+\text{Ric}_f(\nabla u, \nabla u))+\left(\frac{p}{2}-1\right)h^{p/2-2}|\nabla h|^{2}\\
&+ ph^{\frac{p}{2}-1}\left\langle \nabla u, \nabla h\right\rangle 
\end{aligned}$$

Let $\left\{ e_1, e_2, \ldots, e_n\right\}$ be an orthonormal frame on $M$ with $|\nabla u|e_1=\nabla u$. Then \eqref{sehs} can be read as 
$$ (p-1)u_{11}+\suml_{i=2}^nu_{ii}=h+\left\langle\nabla f, \nabla u\right\rangle +|\nabla u|^{2-p}\lambda_{1,p}(p-1)^{p-1}. $$
Therefore, 
$$ \begin{aligned}
u_{ij}^2
&\geq \suml_{i=1}^nu_{1i}^2+\frac{1}{n-1}\left(\suml_{i=2}^nu_{ii}\right)^2\\
&\geq \suml_{i=1}^nu_{1i}^2+\frac{(h+\lambda_{1,p}(p-1)^{p-1}|\nabla u|^{2-p}+\left\langle\nabla f, \nabla u\right\rangle -(p-1)u_{11})^{2}}{n-1}\\
&\geq \suml_{i=1}^nu_{1i}^2+\frac{1}{n-1}\left(\frac{(h+\lambda_{1,p}(p-1)^{p-1}|\nabla u|^{2-p}-(p-1)u_{11})^{2}}{1+\frac{m-n}{n-1}}-\frac{\left\langle\nabla u, \nabla f\right\rangle^{2} }{\frac{m-n}{n-1}}\right)\\
&\geq \suml_{i=1}^nu_{1i}^2+\frac{(h+\lambda_{1,p}(p-1)^{p-1}|\nabla u|^{2-p})^{2}}{m-1}-\frac{\left\langle\nabla f, \nabla u\right\rangle^{2} }{m-n}\\
&\quad \quad -\frac{2(p-1)}{m-1}hu_{11}-\frac{2\lambda_{1,p}(p-1)^{p}}{m-1}|\nabla u|^{2-p}u_{11}.
\end{aligned} $$
Note that we used $(a-b)^{2}\geq\frac{a^2}{1+\delta}-\frac{b^{2}}{\delta}$ for $\delta >0$ in the fourth inequality. Again, by using the identities
$$ 2hu_{11}=\left\langle \nabla u, \nabla h\right\rangle, \quad \suml_{i=1}^nu_{1i}^2=\frac{1}{4}\frac{|\nabla h|^2}{h}  $$
we conclude that
$$\begin{aligned}
u_{ij}^2\geq& \frac{1}{4}\frac{|\nabla h|^2}{h}+ \frac{(h+\lambda_{1,p}(p-1)^{p-1}|\nabla u|^{2-p})^{2}}{m-1}-\frac{\left\langle\nabla f, \nabla u\right\rangle^{2} }{m-n}\\
&\quad \quad -\frac{2(p-1)}{m-1}\left\langle\nabla h, \nabla u\right\rangle -\frac{2\lambda_{1,p}(p-1)^{p}}{m-1}|\nabla u|^{2-p}\frac{\left\langle\nabla h, \nabla u\right\rangle }{h}.
\end{aligned}$$
Assume that $\text{Ric}_f^{m}\geq-(m-1)\kappa$, we infer
 \begin{align}
{\mathfrak L}(h)
=&2h^{p/2-1}(u_{ij}^2+\text{Ric}_f(\nabla u, \nabla u))+\left(\frac{p}{2}-1\right)h^{p/2-2}|\nabla h|^{2}\notag\\
&\quad+ ph^{\frac{p}{2}-1}\left\langle \nabla u, \nabla h\right\rangle \notag\\
\geq& 2h^{p/2-1}\left(\frac{1}{4}\frac{|\nabla h|^2}{h}+ \frac{(h+\lambda_{1,p}(p-1)^{p-1}|\nabla u|^{2-p})^{2}}{m-1}+\left(\text{Ric}_f-\frac{df\otimes df}{m-n}\right)(\nabla u, \nabla u)\right)\notag\\
&\quad\quad+\left(\frac{p}{2}-1\right)h^{p/2-2}|\nabla h|^{2}+ ph^{\frac{p}{2}-1}\left\langle \nabla u, \nabla h\right\rangle\notag \\
&\quad\quad -\frac{4(p-1)}{m-1}h^{\frac{p}{2}-1}\left\langle\nabla h, \nabla u\right\rangle -\frac{2\lambda_{1,p}(p-1)^{p}}{m-1}\frac{\left\langle\nabla h, \nabla u\right\rangle }{h}\notag\\
\geq& 2h^{p/2-1}\left(\frac{(h+\lambda_{1,p}(p-1)^{p-1}|\nabla u|^{2-p})^{2}}{m-1}-(m-1)h\right)\notag\\
&\quad\quad+\frac{p-1}{2}h^{p/2-2}|\nabla h|^{2}+ ph^{\frac{p}{2}-1}\left\langle \nabla u, \nabla h\right\rangle\notag \\
&\quad \quad -\frac{4(p-1)}{m-1}h^{\frac{p}{2}-1}\left\langle\nabla h, \nabla u\right\rangle -\frac{2\lambda_{1,p}(p-1)^{p}}{m-1}\frac{\left\langle\nabla h, \nabla u\right\rangle }{h}.\label{s2}
 \end{align} 
To show the sharp estimate, let $x$ be the unique positive root of the equation 
$$ x^{\frac{p}{2}}-(m-1)x^{\frac{p-1}{2}}+\lambda_{1,p}(p-1)^{p-1}=0. $$
For any $\delta>0$, we consider 
$$ \omega=\begin{cases}
h-(x+\delta), \quad &h>x+\delta\\
0,&\text{ otherwise}
\end{cases}. $$
To show global sharp estimate of weighted $p$-eigenfunction, we need to have a upper bound of $
\lambda_{1, p}$ as follows
\begin{lemma}\label{eigenvalue}
Let $(M^n, g, e^{-f}dv)$ be an $n$-dimensional complete noncompact manifold with $Ric_f^{m}\geq-(m-1)$. Then 
$$ \lambda_{1,p}\leq\left(\frac{m-1}{p}\right)^p. $$
\end{lemma}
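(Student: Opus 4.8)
The plan is to bound $\lambda_{1,p}$ from above by testing the variational characterization of the bottom of the weighted $p$-spectrum against the explicit radial function $e^{-a\rho}$. Recall that for a complete noncompact manifold the first eigenvalue admits the Rayleigh characterization
\[
\lambda_{1,p}=\inf_{\phi\neq 0}\frac{\int_M|\nabla\phi|^p e^{-f}dv}{\int_M|\phi|^p e^{-f}dv},
\]
the infimum being taken over nonzero functions in the appropriate weighted Sobolev space. The key observation is that the distance function $\rho(x)=\mathrm{dist}(o,x)$ is Lipschitz with $|\nabla\rho|=1$ almost everywhere, so for the test function $\phi_a:=e^{-a\rho}$ one has $|\nabla\phi_a|=a\,e^{-a\rho}=a\,\phi_a$ pointwise a.e. Consequently the Rayleigh quotient of $\phi_a$ equals \emph{exactly} $a^p$, with no cancellation or volume estimate needed inside the quotient itself. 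Thus, once $\phi_a$ is shown to be admissible, we immediately get $\lambda_{1,p}\le a^p$.

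The only point requiring care is that $\phi_a$ belongs to the relevant function space, i.e.\ that $\int_M e^{-ap\rho}e^{-f}dv<\infty$. Here I would invoke the volume comparison \eqref{volumecomparison}, which under $Ric_f^m\ge-(m-1)$ (so $\kappa=1$) yields $V_f(B_R)\le C\,V_{\HH^m}(R)\le C'e^{(m-1)R}$. Integrating by parts in the radial variable gives $\int_M e^{-ap\rho}e^{-f}dv=ap\int_0^\infty e^{-apr}V_f(B_r)\,dr$, and the bound $V_f(B_r)\le C'e^{(m-1)r}$ shows this integral is finite precisely when $ap>m-1$, that is, for every $a>\frac{m-1}{p}$. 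For such $a$ the function $\phi_a$ lies in $L^p(e^{-f}dv)$ together with its gradient, hence is admissible.

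To make the test legitimate against the compactly supported definition, I would approximate $\phi_a$ by $\phi_a\chi_R$, where $\chi_R$ is a standard cutoff equal to $1$ on $B_R$, vanishing outside $B_{2R}$, with $|\nabla\chi_R|\le C/R$. Since $\phi_a\in W^{1,p}(e^{-f}dv)$ and its tail mass tends to $0$, the error term $\int\phi_a^p|\nabla\chi_R|^p e^{-f}$ is $O(R^{-p}\int_{\{R\le\rho\le 2R\}}\phi_a^p e^{-f})\to 0$, so dominated convergence gives $\int|\nabla(\phi_a\chi_R)|^p e^{-f}\to\int|\nabla\phi_a|^p e^{-f}$ and likewise for the denominator; hence the quotients converge to $a^p$ and $\lambda_{1,p}\le a^p$. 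Letting $a\to\left(\frac{m-1}{p}\right)^{+}$ then gives $\lambda_{1,p}\le\left(\frac{m-1}{p}\right)^p$. The main obstacle is entirely this admissibility/integrability step: everything hinges on the exponential weighted-volume growth rate being at most $m-1$, which is exactly what the comparison \eqref{volumecomparison} supplies; the pointwise identity $|\nabla\rho|=1$ a.e.\ and the truncation are then routine.
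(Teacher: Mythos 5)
Your argument is correct, but it is not the route the paper takes. The paper's proof goes through volume growth in both directions: assuming $\lambda_{1,p}>0$, it invokes Theorem~0.1 of Buckley--Koskela \cite{BK} to get the lower bound $V_f(B_r)\geq Ce^{p\lambda_{1,p}^{1/p}r}$ for large $r$, plays this off against the Bakry--Qian volume comparison \eqref{volumecomparison}, which gives $V_f(B_r)\leq C_1e^{(m-1)r}$, and concludes $p\lambda_{1,p}^{1/p}\leq m-1$ by letting $r\to\infty$. You instead test the Rayleigh quotient directly with $\phi_a=e^{-a\rho}$ for $a>\frac{m-1}{p}$, observe that $|\nabla\phi_a|=a\phi_a$ a.e.\ forces the quotient to equal $a^p$ exactly, and use the \emph{same} volume comparison only to secure integrability of $\phi_a^p e^{-f}$ (via $ap>m-1$) so that the cutoff approximation is legitimate; then $a\downarrow\frac{m-1}{p}$. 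Both proofs ultimately rest on the exponential upper bound $V_f(B_r)\leq C_1e^{(m-1)r}$. Your version is more self-contained and elementary, since it avoids the citation to \cite{BK} entirely; the paper's version is shorter given that citation and has the side benefit that the \cite{BK} volume-decay estimates it relies on are reused later (e.g.\ in the rigidity arguments of Section~4, where bounds of the form $V_f(E\setminus B(R))\leq \widehat{c}e^{-(m-1)R}$ are needed). The only points in your write-up that deserve a slightly more careful sentence are (i) that the infimum defining $\lambda_{1,p}$ is taken over compactly supported (Lipschitz) functions, which is exactly why the truncation step is needed rather than optional, and (ii) that the convergence of the numerator is cleanest via the Minkowski inequality applied to $\nabla(\phi_a\chi_R)=\chi_R\nabla\phi_a+\phi_a\nabla\chi_R$ rather than dominated convergence alone; neither affects correctness.
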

In order to prove lemma \ref{eigenvalue}, let us recall a definition.
\begin{define}(see \cite{BK})
Let $(M^n, g, e^{-f}dv)$ be a smooth metric measure space. For a fixed point $o\in M$, let $\overline{B}_o(r)=\left\{q\in M: dist(o, q)\leq r\right\}$. An end $E$ is an unbounded component $E$ of $M\setminus\overline{B}_o(r_0)$ for some $r_0\geq0$. For any $1\leq p<\infty$. The end E is said to be $p$-parabolic if for each $K\Subset M$ and $\eps>0$ there exists a Lipschitz function $\phi$ with compact support, $\phi\geq1$ on $K$, such that $\int_Eg_\phi^p<\eps$. Otherwise, $E$ is $p$-nonparabolic. Here $g_\phi(x)$ is defined as
$$
 g_\phi(x)=\liminf\limits_{r\to 0^+}\sup\limits_{ y\in B_x(r)}\frac{dist(\phi(y), \phi(x))}{dist(y,x)}.
$$
\end{define}
\begin{proof}[Proof of lemma \ref{eigenvalue}]
Without loss of generality, we may assume that $\lambda_{1,p}$ is positive. By the variational characterization of $\lambda_{1,p}$, we know that $M$ has infinite $f$-volume. The theorem 0.1 in \cite{BK} implies $M$ is $p$-nonparabolic, moreover 
$$ V_f(B(r))\geq Ce^{p\lambda_{1,p}^{1/p}r}, $$
for all sufficiently large $r$ and $C$ is a constant dependent on $r$. On the other hand, the volume comparison theorem \eqref{volumecomparison} infers
$$ V_f(B(r))\leq C_1e^{(m-1)r}. $$
Here $C_1$ is a constant dependent on $r$. Therefore, we obtain
$$ Ce^{p\lambda_{1,p}^{1/p}r}\leq C_1e^{(m-1)r}, $$
or equivalently,
$$ \lambda_{1,p}^{1/p}\leq\frac{1}{pr}\ln\left(\frac{C_1}{C}\right)+\frac{m-1}{p} $$
for all sufficiently large $r$. Lettting $r\to \infty$, we have
$$ \lambda_{1,p}\leq\left(\frac{m-1}{p}\right)^p. $$
The proof is complete. 
\end{proof}
We have following key lemma.
\begin{lemma}\label{lsehb}
Let $(M^n, g, e^{-f}dv)$ be an $n$-dimensional complete noncompact manifold with $Ric_f^{m}\geq-(m-1)$. 
%If tjhe sectional curvature is bounded from below and the weighted function satisfies a growth condition
%$$ \liml_{R\to\infty}\frac{\sup\limits_{q\in B_R}|\nabla f|(q)}{R}=0 $$
Then there are some positive constants $a$ and $b$ depending on $p, n, m$ and $\delta$ such that
\begin{equation}\label{sehb}
 \mathfrak{L}(\omega)\geq a\omega-b|\nabla\om|,
\end{equation}
in the weak sense, namely
\begin{equation}\label{e24}
\int_M  \mathfrak{L}(\phi)\omega e^{-f}\geq \int_M\phi(a\om-b|\nabla\om|)e^{-f}
\end{equation}
for any non-negative function $\phi$ with compact support on $M$.
\end{lemma}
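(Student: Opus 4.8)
The plan is to prove the pointwise inequality $\mathfrak{L}(h)\ge a\,\omega-b|\nabla\omega|$ on the open set $\Om_\delta:=\{h>x+\delta\}$ and then upgrade it to the weak statement \eqref{e24} by integrating by parts twice, the boundary contribution being controlled by a sign. Since $x+\delta$ is constant and $\mathfrak{L}$ annihilates constants, on $\Om_\delta$ one has $\omega=h-(x+\delta)$, $\nabla\omega=\nabla h$ and $\mathfrak{L}(\omega)=\mathfrak{L}(h)$; moreover $h\ge x+\delta>0$ there, so $u$ and $h$ are smooth on $\Om_\delta$ and it suffices to work classically. I would start from the lower bound \eqref{s2} and split its right-hand side into the zeroth-order part $F(h):=2h^{p/2-1}\big(\tfrac{(h+c\,h^{1-p/2})^2}{m-1}-(m-1)h\big)$, with $c:=\lambda_{1,p}(p-1)^{p-1}$, and the remaining terms, all of which carry a factor $\nabla h$.

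For the gradient part, the leading term $\tfrac{p-1}{2}h^{p/2-2}|\nabla h|^2\ge0$ is simply discarded, and each remaining term is estimated by Cauchy--Schwarz $|\langle\nabla u,\nabla h\rangle|\le h^{1/2}|\nabla h|$. The crucial input is Proposition \ref{ddmain} (obtained by letting $R\to\infty$ in Theorem \ref{maintheorem}), which yields a global bound $h\le H$ with $H=H(p,n,m)$; hence on $\Om_\delta$ one has $x+\delta\le h\le H$, every power of $h$ that appears is pinched between positive constants, and the gradient part is $\ge-b|\nabla h|$ for a constant $b$ depending on $p,n,m,\delta$. For the zeroth-order part, the defining relation $x^{p/2}-(m-1)x^{(p-1)/2}+c=0$ gives $h+c\,h^{1-p/2}=(m-1)\sqrt h$ at $h=x$, so $F(x)=0$; writing $\phi:=h+c\,h^{1-p/2}$ and factoring the bracket as $\tfrac{1}{m-1}(\phi-(m-1)\sqrt h)(\phi+(m-1)\sqrt h)$ shows its sign equals that of $t^p-(m-1)t^{p-1}+c$ at $t=\sqrt h$, which is strictly positive for $h>x$, since $x$ is precisely the root beyond which this polynomial stays positive. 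As $F\in C^1$ with $F(x)=0$, $F'(x)>0$ and $F>0$ on $(x,H]$, the continuous positive function $F(h)/(h-x)$ is bounded below by some $a>0$ on $(x,H]$, whence $F(h)\ge a(h-x)\ge a(h-(x+\delta))=a\,\omega$. Combining the two parts gives the pointwise inequality on $\Om_\delta$.

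To reach \eqref{e24}, fix $\phi\ge0$ with compact support and use that $\mathfrak{L}$ is formally self-adjoint with respect to $e^{-f}dv$ and that $A$ is symmetric; since $\omega$ and $\nabla\omega$ vanish off $\Om_\delta$ (away from $\{h=0\}$, so the coefficients are smooth wherever they matter), a first integration by parts gives $\int_M\mathfrak{L}(\phi)\omega\,e^{-f}=-\int_{\Om_\delta}h^{p/2-1}\langle A\nabla\omega,\nabla\phi\rangle e^{-f}$. A second integration by parts over $\Om_\delta$, where $\omega=h-(x+\delta)$ is smooth, moves the operator back onto $\omega$ and produces $\int_{\Om_\delta}\phi\,\mathfrak{L}(h)e^{-f}$ together with a boundary integral $-\int_\Sigma\phi\,e^{-f}h^{p/2-1}\langle A\nabla\omega,\nu\rangle$ over $\Sigma:=\{h=x+\delta\}$. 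Because $\nabla\omega=\nabla h$ points into $\Om_\delta$, the outward normal is $\nu=-\nabla h/|\nabla h|$, and the positive-definiteness of $A$ (eigenvalues $1$ and $p-1$) forces $\langle A\nabla\omega,\nu\rangle\le0$, so this boundary term is $\ge0$. Discarding it and inserting the pointwise bound yields exactly \eqref{e24}.

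I expect the main obstacle to be the low regularity at the free boundary: $\omega=(h-(x+\delta))^+$ is only Lipschitz and $\Sigma$ need not be smooth. I would justify the two integrations by parts either by invoking Sard's theorem to take $x+\delta$ a regular value (legitimate for a.e.\ $\delta$, which suffices since $\delta$ is arbitrary) or, more cleanly, through a Kato-type inequality for the positive part, which is precisely what guarantees that the distributional contribution of the kink carries the nonnegative sign used above. The only other delicate point is the algebra confirming that $x$ is the correct root—the one past which $t^p-(m-1)t^{p-1}+c$ is positive—and that $F'(x)>0$, which is what allows $a>0$ to be extracted; in the borderline case $\lambda_{1,p}=\big(\tfrac{m-1}{p}\big)^p$, where this root becomes double and $F'(x)=0$, this extraction degenerates and must be treated with extra care.
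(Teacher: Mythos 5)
Your proposal is correct and follows essentially the same route as the paper: the same decomposition of \eqref{s2} into a zeroth-order part and gradient terms controlled via Proposition \ref{ddmain}, the same reduction to the positivity of $h^{p/2}-(m-1)h^{(p-1)/2}+\lambda_{1,p}(p-1)^{p-1}$ for $h\geq x+\delta$, and the same two integrations by parts with the boundary term signed by $\nu=-\nabla h/|\nabla h|$ and the positive-definiteness of $A$. The only cosmetic difference is that you extract the constant $a$ by compactness of $[x+\delta,H]$ where the paper compares derivatives, and your worry about the borderline double root is unnecessary since $a$ is permitted to depend on $\delta$ and $F/(h-x)$ is already bounded below on $[x+\delta,H]$.
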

\begin{proof}Since $h=|\nabla u|^2$, by Proposition \ref{ddmain}, we have 
$$ x+\delta\leq h\leq c(n,p,m). $$
Denote by $\Omega=\left\{h\geq x+\delta\right\}$, then \eqref{s2} implies that there are positive constants $c_1, c_2$ depending only on $n,p,m$ such that on $\Omega$
\begin{equation}\label{s28}
\mathfrak{L}(\omega)\geq c_1\left(h^\frac{p}{2}-(m-1)h^\frac{p-1}{2}+\lambda_{1,p}(p-1)^{p-1}\right)-c_2|\nabla\omega|.
\end{equation}
Now, we can follow an argument in \cite{SW} to prove that on $\Omega$
\begin{equation}\label{s29}
h^\frac{p}{2}-(m-1)h^\frac{p-1}{2}+\lambda_{1,p}(p-1)^{p-1}\geq c_3\omega
\end{equation}
for some positive constant $c_3$ depending only on $n,p,m,\delta$. Indeed, we consider both sides of \eqref{s29} as a function of $h$. It is easy to see that \eqref{s29} is valid when $h=x$ for any choice of $c_3$. Now, we view the left hand side of \eqref{e29} as a function of $h$, its derivative is 
$$ \frac{p}{2}h^{\frac{p}{2}-1}-\frac{(m-1)(p-1)}{2}h^\frac{p-3}{2}=\frac{1}{2}h^\frac{p-3}{2}\left(ph^\frac{1}{2}-(p-1)(n-1)\right). $$
%By lemma \ref{wz}
% we have 
By lemma \ref{eigenvalue}, we have $\lambda_{1,p}\leq\left(\frac{m-1}{p}\right)^p$, this implies $x\geq(p-1)^2\left(\frac{m-1}{p}\right)^2$. Since $h\geq x+\delta$, we conclude that 
$$ ph^\frac{1}{2}-(p-1)(m-1)\geq c(n,p,m,\delta)>0. $$
Hence, \eqref{s29} holds true on $\Omega$ for some $0<c_3<c(n,p,m, \delta)$. 

From \eqref{s28}, \eqref{s29}, we obtain 
\begin{equation}\label{d2d} \mathfrak{L}(\omega)\geq a\omega-b|\nabla\omega| 
\end{equation}
on $\Omega$. Here $a, b$ are positive constants depending only on $p,n,m,\delta$.

Using the integration by parts, we have 
$$ \begin{aligned}
\int_M\mathfrak{L}(\phi)\omega e^{-f}
&=\int_\Omega\mathfrak{L}(\phi)\omega e^{-f}\\
&=\int_M\phi\mathfrak{L}\omega e^{-f}+\int_{\partial\Omega}\left\langle h^{\frac{p}{2}-1}A(\nabla\phi), \nu\right\rangle \omega e^{-f}-\int_{\partial\Omega}\left\langle h^{\frac{p}{2}-1}A(\nabla\omega), \nu\right\rangle\phi e^{-f}.  
\end{aligned} $$
Here $\nu$ is the outward unit normal vector of $\partial\Omega$. Since $\nu=-\frac{\nabla h}{|\nabla h|}=-\frac{\nabla\omega}{|\nabla\omega|}$ and $\omega=0$ on $\partial\Omega$, this implies
\begin{align}
\int_M\mathfrak{L}(\phi)\omega e^{-f}
&=\int_\Omega\phi\mathfrak{L}(\omega)e^{-f}+\int_{\partial\Omega}h^{\frac{p}{2}-1}\frac{\phi\left\langle A(\nabla\omega), \nabla\omega\right\rangle }{|\nabla\omega|}e^{-f}\notag\\
&\geq\int_\Omega \phi\mathfrak{L}(\omega)e^{-f}.\notag\\
&\geq\int_\Omega\phi(a\omega-b|\nabla\omega|)e^{-f}\notag\\
&=\int_M\phi(a\omega-b|\nabla\omega|)e^{-f}.\notag
\end{align}
where we used \eqref{d2d} in the third inequality. The proof is complete.
\end{proof} 
Now, we give a proof of the theorem \ref{2ndtheorem}.
\begin{proof}[Proof of Theorem \ref{2ndtheorem}]We follow the proof in \cite{SW}. First, we will prove that $\om\equiv0$. Indeed, for any cut-off function $\phi$ on $M$, and for any $q>0$, by using \eqref{e24}, we have
$$ \int_M\omega\mathfrak{L}(\phi^2\omega^q)e^{-f}\geq\int_M(a\phi^2\omega^{q+1}-b\phi^2\omega^q|\nabla\omega|)e^{-f}. $$
Integration by parts implies
$$ \int_M\omega\mathfrak{L}(\phi^2\omega^q)e^{-f}=-\int_\Omega\left\langle A(\nabla(\phi^2\omega^q)), \nabla\omega\right\rangle h^{\frac{p}{2}-1}e^{-f}.  $$
Therefore,
$$\begin{aligned}
 a\int_M\phi^2\omega^{q+1}e^{-f}
\leq b&\int_\Omega\phi^2\omega^q|\nabla\omega|e^{-f}+C(n,p,m\delta)\int_\Omega\phi|\nabla\phi||\nabla\omega|\omega^qe^{-f}\\
&-\int_\Omega q\phi^2\omega^{q-1}\left\langle A(\nabla\omega), \nabla\omega\right\rangle e^{-f}. 
\end{aligned} $$
It is easy to see that 
$$\begin{aligned}
 \left\langle A(\nabla\omega), \nabla\omega\right\rangle 
&=|\nabla\omega|^2+(p-2)\frac{\left\langle\nabla u, \nabla\omega\right\rangle }{|\nabla u|^2}\\
&\geq(p-1)|\nabla\omega|^2.
\end{aligned} $$
Hence, for any $\eps>0$, we have
$$ \begin{aligned}
a\int_M\phi^2\omega^{q+1}e^{-f}
\leq b\eps&\int_\Omega\phi^2\omega^{q+1}e^{-f}+\frac{b}{4\eps}\int_\Omega\phi^2\omega^{q-1}|\nabla\omega|^2e^{-f}+\eps\int_\Omega|\nabla\phi|^2\omega^{q+1}e^{-f}\\
&+\frac{\widehat{c}}{4\eps}\int_\Omega\phi^2\omega^{q-1}|\nabla\omega|^2e^{-f}-\widetilde{c}\int_\Omega\phi^2\omega^{q-1}|\nabla\omega|^2e^{-f},
\end{aligned} $$
where $\widehat{c}$ and $\widetilde{c}$ are constants depending only on $n,p,m,\delta$. Choose $q$ such that $b+\widehat{c}=4\eps\widetilde{c}$ then
$$ (a-b\eps)\int_M\phi^2\omega^{q+1}e^{-f}\leq\eps\int_M|\nabla\phi|^2\omega^{q+1}e^{-f}. $$
Now a standard argument implies either $\omega\equiv0$ or for all $R\geq1$,
\begin{equation}\label{contra} 
\int_{B(R)}\omega^{q+1}e^{-f}\geq c_1e^{R\ln\frac{c_2}{\eps}}
\end{equation}
for some positive constants $c_1$ and $c_2$ independent of $\eps$. 

Since $\omega$ is bounded and the $f$-volume of the ball $B(R)$ satisfies $V_f(B(R))\leq ce^{(m-1)R}$ (by \eqref{volumecomparison}), if $\eps>0$ is chosen sufficiently small, \eqref{contra} can not hold. Hence, $\omega\equiv0$. 
This implies $h\leq x$ since $\delta$ is arbitrary. Thus, $|\nabla\ln v|\leq y$. The proof is complete.
\end{proof}
Let $\lambda_{1,p}=\left(\frac{m-1}{p}\right)^p$, it is easy to see that the equation 
$$ (p-1)y^p-(m-1)y^{p-1}+\lambda_{1,p}=0 $$
has the unique positive solution $y=\frac{m-1}{p}$. Hence, we have the following corollary.
\begin{corollary}\label{cor23}
Let $(M^n, g, e^{-f})$ be an $n$-dimensional complete noncompact manifold with $Ric_f^m\geq-(m-1)$. Suppose that $u$ is a positive solution of 
$$ e^{f}div\left(e^{-f}|\nabla u|^{p-2}\nabla u\right)=-\left(\frac{m-1}{p}\right)^pu^{p-1}. $$
Then 
$$ \frac{|\nabla u|}{u}\leq\frac{m-1}{p}. $$ 
\end{corollary}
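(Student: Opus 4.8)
The plan is to derive this as an immediate specialization of Theorem \ref{2ndtheorem}. In the corollary the positive function $u$ plays exactly the role of the eigenfunction $v$ in Theorem \ref{2ndtheorem}, with the eigenvalue fixed at its maximal value $\lambda_{1,p}=\left(\frac{m-1}{p}\right)^p$. Since $u>0$ we have $|\nabla\ln u|=|\nabla u|/u$, so the entire content reduces to identifying the unique positive root $y$ of
$$ (p-1)y^{p}-(m-1)y^{p-1}+\lambda_{1,p}=0 $$
and checking that it equals $\frac{m-1}{p}$ in this case.

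To locate the root, I would study $P(y):=(p-1)y^{p}-(m-1)y^{p-1}+\lambda_{1,p}$ on $(0,\infty)$. Since $P(0)=\lambda_{1,p}>0$ and
$$ P'(y)=(p-1)y^{p-2}\big(py-(m-1)\big), $$
the function $P$ is strictly decreasing on $\left(0,\frac{m-1}{p}\right)$ and strictly increasing afterward, so it has a unique minimum at $y_0=\frac{m-1}{p}$. A direct substitution, factoring out $\left(\frac{m-1}{p}\right)^{p-1}$, gives
$$ P\left(\tfrac{m-1}{p}\right)=\lambda_{1,p}-\left(\tfrac{m-1}{p}\right)^{p}. $$
Under the hypothesis $\lambda_{1,p}=\left(\frac{m-1}{p}\right)^p$ this minimum value is exactly zero; hence $P$ vanishes on $(0,\infty)$ only at $y_0=\frac{m-1}{p}$, which is therefore the unique positive root.

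Combining these observations, Theorem \ref{2ndtheorem} yields $|\nabla\ln u|\leq \frac{m-1}{p}$, that is, $\frac{|\nabla u|}{u}\leq\frac{m-1}{p}$, as claimed. I do not expect any genuine obstacle here: the sharp bound is inherited wholesale from Theorem \ref{2ndtheorem}, and the only work is the elementary root-finding above. It is worth remarking that this computation dovetails with Lemma \ref{eigenvalue}: the condition $P\left(\frac{m-1}{p}\right)\leq 0$ needed for a positive root to exist is precisely $\lambda_{1,p}\leq\left(\frac{m-1}{p}\right)^p$, and the extremal eigenvalue is exactly the value at which the minimum of $P$ touches zero, forcing $y=\frac{m-1}{p}$.
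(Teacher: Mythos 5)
Your proposal is correct and follows exactly the route the paper takes: the corollary is read off from Theorem \ref{2ndtheorem} once one checks that for $\lambda_{1,p}=\left(\frac{m-1}{p}\right)^p$ the polynomial $(p-1)y^{p}-(m-1)y^{p-1}+\lambda_{1,p}$ has $y=\frac{m-1}{p}$ as its unique positive root. The paper dismisses that root-finding as ``easy to see,'' whereas you supply the (correct) monotonicity argument showing the minimum of $P$ sits at $\frac{m-1}{p}$ and equals $\lambda_{1,p}-\left(\frac{m-1}{p}\right)^{p}=0$; no substantive difference.
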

\section{Rigidity of manifolds with maximal $\lambda_{1,p}$}
\setcounter{equation}{0}
In this section, we study structure at infinity of manifolds with maximal $\lambda_{1,p}$. Our main purpose is to prove the theorem \ref{3rdtheorem} stated in the introduction part.
\begin{theorem}\label{theorem1}
Let $(M^n, g, e^{-f}dv)$ be a smooth metric measure space of dimension $n\geq2$. Suppose that $Ric_f^m\geq-(m-1)$ and $\lambda_{1,p}=\left(\frac{m-1}{p}\right)^p$. Then either $M$ has no $p$-parabolic ends or $M=\RR\times N^{n-1}$ for some compact manifold $N$.
\end{theorem}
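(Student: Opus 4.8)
The plan is to argue by contraposition: assuming $M$ carries at least one $p$-parabolic end, I will show that $M$ splits as a warped product $\RR\times N$ with $N$ compact. Throughout I work with a positive weighted $p$-eigenfunction $v$ for the maximal eigenvalue $\lambda_{1,p}=((m-1)/p)^p$, and with $u=-(p-1)\ln v$, $h=|\nabla u|^2$ as in Section 3. Corollary \ref{cor23} gives the one-sided bound $|\nabla\ln v|\le(m-1)/p$, equivalently $h\le x$ with $x=((p-1)(m-1)/p)^2$ the positive root of $x^{p/2}-(m-1)x^{(p-1)/2}+\lambda_{1,p}(p-1)^{p-1}=0$. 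All of the rigidity is extracted from forcing the equality $h\equiv x$.

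First I would record the volume rigidity implied by maximality. Exactly as in the proof of Lemma \ref{eigenvalue}, positivity of $\lambda_{1,p}$ together with \cite{BK} makes $M$ $p$-nonparabolic with $V_f(B(r))\ge Ce^{(m-1)r}$, while \eqref{volumecomparison} gives $V_f(B(r))\le C_1e^{(m-1)r}$; hence the weighted volume grows exactly like $e^{(m-1)r}$ and the comparison \eqref{laplaciancomparison} is saturated along rays. Since $M$ is already $p$-nonparabolic, the presence of a $p$-parabolic end forces $M$ to have at least two ends. Following the end theory of \cite{BK} and the constructions of Li--Wang and Sung--Wang (\cite{LW1,LW2,SW}), I would then build, by exhausting $M$ and taking a limit of solutions of the eigenvalue equation with data adapted to the two ends, a positive eigenfunction $v$ whose gradient bound is attained: $|\nabla\ln v|\equiv(m-1)/p$, i.e. $h\equiv x$. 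This is the step in which the $p$-parabolic end is indispensable, since it produces the opposing growth of $v$ on the two ends that forces the estimate to be saturated rather than strict.

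The rigidity now comes from revisiting the chain of inequalities behind \eqref{s2} and Corollary \ref{cor23} and imposing equality. With $h\equiv x$ one gets $\nabla h=0$, hence $u_{1i}=0$ for every $i$; equality in the Cauchy--Schwarz estimate on the Hessian forces the off-diagonal $u_{ij}$ ($2\le i<j$) to vanish and the $u_{ii}$ ($i\ge2$) to coincide; equality in the curvature bound forces $Ric_f^m(\nabla u,\nabla u)=-(m-1)h$, so the curvature attains its lower bound in the $\nabla u$ direction; and equality in the Young inequality $(a-b)^2\ge a^2/(1+\delta)-b^2/\delta$ forces $f$ to be affine along the flow of $\nabla u$. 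Thus $\mathrm{Hess}\,u=c(g-|\nabla u|^{-2}\nabla u\otimes\nabla u)$ for a constant $c$. Since $h\equiv x>0$, the function $u$ has no critical points, so $t:=(p/((p-1)(m-1)))u$ satisfies $|\nabla t|\equiv1$ and its gradient flow realizes a diffeomorphism $M\cong\RR\times N$ with $N=\{u=\mathrm{const}\}$; the umbilic, constant-mean-curvature form of $\mathrm{Hess}\,u$ together with the affine behaviour of $f$ identifies the metric and weight as the warped product of the type appearing in the Example of Section 1. Compactness of $N$ follows because $M$ has finitely many ends and the only ends of the product are $t\to\pm\infty$.

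The step I expect to be hardest is the middle one: constructing the saturating eigenfunction from a $p$-parabolic end and propagating the equality to all of $M$. Two difficulties stand out. First, $v$ is only $C^{1,\alpha}$ and the Bochner computation behind \eqref{s2} is valid only on $\{h>0\}$, so the equality analysis must be run weakly through the operator $\mathfrak{L}$ and Lemma \ref{lsehb}, with the critical set $\{\nabla v=0\}$ excluded a priori and eliminated a posteriori by $h\equiv x>0$. Second, passing from ``$h=x$ somewhere along the parabolic end'' to ``$h\equiv x$ on $M$'' requires a strong maximum principle, or unique continuation, for the degenerate operator $\mathfrak{L}$ applied to $x-h\ge0$; proving the needed Harnack-type statement in the weighted degenerate setting, and then deducing compactness of $N$ from the end structure, are the points demanding the most care.
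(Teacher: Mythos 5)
Your overall strategy (assume a $p$-parabolic end exists and derive the splitting) matches the paper's, but the mechanism you propose is different and has a genuine gap at its core. The decisive step in your outline --- producing, from the mere existence of a $p$-parabolic end, a positive eigenfunction $v$ with $|\nabla\ln v|\equiv(m-1)/p$, i.e.\ with the gradient estimate saturated identically --- is asserted but not substantiated, and I do not see how to carry it out: the exhaustion-and-limit constructions of Li--Wang and Sung--Wang produce functions adapted to ends, but nothing in that machinery forces pointwise equality in the gradient bound of Corollary \ref{cor23}. Even granting such a $v$, you yourself flag the second obstruction: propagating $h=x$ from one point (or one end) to all of $M$ needs a strong maximum principle or unique continuation for the degenerate operator $\mathfrak{L}$, which is not available off the shelf and is not supplied. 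So the proposal as written does not close.

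The paper sidesteps both difficulties entirely and never touches the eigenfunction in this proof. It takes the Busemann function $\beta$ of a ray in the $p$-parabolic end $E$, uses the Bakry--Qian comparison \eqref{laplaciancomparison} to get $\Delta_f\beta\geq-(m-1)$, and checks that $\omega=e^{\frac{m-1}{p}\beta}$ satisfies $\Delta_{p,f}\omega\geq-\lambda_{1,p}\omega^{p-1}$. Plugging $\phi\omega$ into the variational characterization of $\lambda_{1,p}$ and integrating by parts yields
$$\int_M\phi^p\omega\left(\Delta_{p,f}\omega+\lambda_{1,p}\omega^{p-1}\right)e^{-f}\leq c\int_M|\nabla\phi|^2\omega^pe^{-f},$$
and the right-hand side is killed as $R\to\infty$ by the volume decay $V_f(E\setminus B(R))\leq\widehat{c}e^{-(m-1)R}$ of the parabolic end (Theorem 0.1 of \cite{BK}, using maximality of $\lambda_{1,p}$) together with $\beta\leq-r+\widetilde{c}$ on $M\setminus E$ and the comparison \eqref{volumecomparison}. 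Hence $\Delta_{p,f}\omega+\lambda_{1,p}\omega^{p-1}\equiv0$, which forces $\Delta_f\beta\equiv-(m-1)$; the splitting then follows from the equality case of the $f$-Laplacian comparison as in \cite{LXD} and \cite{LW3}. If you want to rescue your approach you would need to supply the two missing ingredients (the saturating eigenfunction and the strong maximum principle for $\mathfrak{L}$); the Busemann-function route requires neither.
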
 
\begin{proof}Our argument is close to the argument in \cite{SW}. 
Suppose that $M$ has a $p$-parabolic end $E$. Let $\beta$ be the Busemann function associated with a geodesic ray $\gamma$ contained in $E$, namely,
$$ \beta(x)=\lim\limits_{t\to\infty}(t-dist(x, \gamma(t))). $$
Using the Laplacian comparison theorem \eqref{laplaciancomparison}, we have 
$$ \Delta_f\beta\geq-(m-1). $$
Hence,
$$ \begin{aligned}
\Delta_{p,f}\left(e^{\frac{m-1}{p}\beta}\right)
&=e^fdiv\left(e^{-f}\left(\frac{m-1}{p}\right)^{p-2}e^{\frac{m-1}{p}(p-2)\beta}\nabla e^{\frac{m-1}{p}\beta}\right)\\
&=\left(\frac{m-1}{p}\right)^{p-1}e^{\frac{m-1}{p}(p-1)\beta}\Delta_f\beta+\left(\frac{m-1}{p}\right)^{p-1}\left(\frac{m-1}{p}\right)(p-1)e^{\frac{m-1}{p}(p-1)\beta}|\nabla\beta|^2\\
&\geq(m-1)\left(\frac{m-1}{p}\right)^{p-1}e^{\frac{m-1}{p}(p-1)\beta}\left(\frac{p-1}{p}-1\right)\\
&=-\left(\frac{m-1}{p}\right)^pe^{\frac{m-1}{p}(p-1)\beta}.
\end{aligned} $$
Therefore, let $\omega:=e^{\frac{m-1}{p}\beta}$, we obtain 
$$ \Delta_{p,f}(\omega)\geq-\lambda_{1,p}\omega^{p-1}. $$
Suppose that $\phi$ is a nonnegative compactly supported smooth function on $M$. Then by the variational principle,
$$ \lambda_{1,p}\int_M(\phi\omega)^pe^{-f}\leq\int_M|\nabla(\phi\omega)|^pe^{-f}. $$
Noting that, integration by parts implies
$$ \int_M\phi^p\omega\Delta_{p,f}(\omega) e^{-f}=-\int_M\phi^p|\nabla\omega|e^{-f}-p\int_M\phi^{p-1}\omega\left\langle\nabla\phi,\nabla\omega\right\rangle|\nabla\omega|^{p-2}e^{-f}  $$
and
$$ \begin{aligned}
|\nabla(\phi\omega)|^p
&=\left(|\nabla\phi|^2\omega^2+2\phi\omega\left\langle\nabla\phi, \nabla\omega\right\rangle+\phi^2|\nabla\omega|^{p-2} \right)^\frac{p}{2}\\
&\leq \phi^p|\nabla\omega|^p+p\phi\omega\left\langle\nabla\phi, \nabla\omega\right\rangle\phi^{p-2}|\nabla\omega|^{p-2}+c|\nabla\phi|^2\omega^p 
\end{aligned} $$
for some constant $c$ depending only on $p$, we infer
\begin{align}
&\int_M\phi^p\omega(\Delta_{p,f}(\omega)+\lambda_{1,p}\omega^{p-1})e^{-f}\notag\\
&=\lambda_{1,p}\int_M(\phi\omega)^pe^{-f}-\int_M\phi^p|\nabla\omega|^pe^{-f}-p\int_M\phi^{p-1}\omega\left\langle\nabla\phi,\nabla\omega\right\rangle|\nabla\omega|^{p-2}e^{-f}\notag\\
&\leq\int_M|\nabla(\phi\omega)|^pe^{-f}-\int_M\phi^p|\nabla\omega|^pe^{-f} -p\int_M\phi^{p-1}\omega\left\langle\nabla\phi,\nabla\omega\right\rangle|\nabla\omega|^{p-2}e^{-f}\notag\\
&\leq c\int_M|\nabla\phi|^2\omega^pe^{-f}.\label{swe31}
\end{align} 
Now, we choose
$$ \phi=\begin{cases}
1&\quad\text{in }B(R)\\
0,&\quad\text{ on }M\setminus B(2R)
\end{cases} $$
such that $|\nabla\phi|\leq\frac{2}{R}$. Then we conclude
\begin{align}
\int_M|\nabla\phi|^2\omega^pe^{-f}
&=\int_M|\nabla\phi|^2e^{(m-1)\beta}e^{-f}\notag\\
&\leq\frac{4}{R^2}\int_{B(2R)\setminus B(R)}e^{(m-1)\beta}e^{-f}\notag\\
&=\frac{4}{R^2}\int_{E\cap(B(2R)\setminus B(R))}e^{(m-1)\beta}e^{-f}+\frac{4}{R^2}\int_{(M\setminus E)\cap(B(2R)\setminus B(R))}e^{(m-1)\beta}e^{-f}.\label{swe32}
\end{align}
Since $\lambda_{1,p}=\left(\frac{m-1}{p}\right)^p$, by theorem 0.1 in \cite{BK}, it turns out that
$$ V_f(E\setminus B(R))\leq  \widehat{c}e^{-(m-1)R}. $$
Hence, the first term of \eqref{swe32} tends to $0$ as $R$ goes to $\infty$. On the other hand, by \cite{LW3} we have 
$$ \beta(x)\leq-r(x)+\widetilde{c} $$
on $M\setminus E$. The volume comparison \eqref{volumecomparison} implies $V_f(B(R))\leq c'e^{(m-1)R}$. It turns out that the second term of \eqref{swe32} also goes to $0$ as $R\to\infty$. Therefore, \eqref{swe31} infers
$$ \Delta_{p,f}(\omega)+\lambda_{1,p}\omega^{p-1}\equiv0. $$
This implies 
$$ \Delta_f\beta=-(m-1). $$
Moreover, all the inequalities used to prove $\Delta_f\beta\geq-(m-1)$ become equalities (see Theorem 1.1 in \cite{LXD}). By the proof of the Theorem 1.1 in \cite{LXD} and the argument in \cite{LW3}, we conclude that $M=\RR\times N^{n-1}$ for some compact manifold $N$ of dimension $n$. The proof is complete.
\end{proof}
For $p$-nonparabolic end, we have the following theorem.
\begin{theorem}\label{nonparabolic}
Let $(M^n, g, e^{-f}dv)$ be a smooth metric measure space of dimension $n\geq3$. Suppose that $Ric_f^m\geq-(m-1)$ and $\lambda_{1,p}=\left(\frac{m-1}{p}\right)^p$ for some $2\leq p\leq\frac{(m-1)^2}{2(m-2)}$. Then either $M$ has only one $p$-nonparabolic end or $M=\RR\times N^{n-1}$ for some compact manifold $N$.
\end{theorem}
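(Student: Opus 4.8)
The plan is to adapt the theory of $p$-harmonic functions on manifolds with two or more nonparabolic ends (the Li--Wang / Sung--Wang framework, as in \cite{LW1, LW3, SW}) to the weighted setting. Suppose, for contradiction, that $M$ has at least two $p$-nonparabolic ends. The first step is to construct a nonconstant bounded weighted $p$-harmonic function, or more precisely a positive weighted $p$-eigenfunction, that distinguishes two ends: on each $p$-nonparabolic end one solves a suitable exterior boundary-value problem for the weighted $p$-Laplacian (with boundary data $1$ on one end and decaying on the others) and passes to a limit over an exhaustion, using $p$-nonparabolicity to guarantee that the limit is nonconstant and has finite weighted $p$-energy near infinity. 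This produces a function $v$ whose associated $u=-(p-1)\ln v$ and $h=|\nabla u|^2$ feed into the differential inequality \eqref{sehb} for $\mathfrak{L}(\omega)$ established in Lemma \ref{lsehb}.

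Next I would exploit the sharp gradient estimate. Since $\lambda_{1,p}=\left(\frac{m-1}{p}\right)^p$ is maximal, Corollary \ref{cor23} forces $|\nabla\ln v|\le\frac{m-1}{p}$ everywhere, so the quantity $h$ saturates its upper bound $x=(p-1)^2\left(\frac{m-1}{p}\right)^2$. The key is then to run the same integration-by-parts / cut-off argument used in the proof of Theorem \ref{2ndtheorem}: testing \eqref{e24} against $\phi^2\omega^q$ and absorbing terms yields either $\omega\equiv0$ or the exponential volume lower bound \eqref{contra}, which is incompatible with the volume comparison $V_f(B(R))\le c\,e^{(m-1)R}$ coming from \eqref{volumecomparison}. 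The hypothesis $2\le p\le\frac{(m-1)^2}{2(m-2)}$ enters precisely here: it is the range in which the coefficients $a,b,\widehat c,\widetilde c$ produced in Lemma \ref{lsehb} can be balanced (by the choice of $q$ and $\eps$ satisfying $b+\widehat c=4\eps\widetilde c$ with $a-b\eps>0$) so that the integral inequality closes. This shows all the inequalities in the gradient estimate become equalities along the relevant directions, which rigidifies the geometry.

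Finally, once equality holds in the Bochner-type chain, the standard splitting machinery applies. Equality in the Laplacian comparison forces $\Delta_f\beta=-(m-1)$ for the Busemann function $\beta$ of a ray in a nonparabolic end, and by Theorem 1.1 in \cite{LXD} together with the argument in \cite{LW3}, this gives the warped/product splitting $M=\RR\times N^{n-1}$ with $N$ compact. Thus either the extra nonparabolic end cannot exist (so $M$ has only one $p$-nonparabolic end) or $M$ splits isometrically as claimed.

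I expect the main obstacle to be the first step: constructing the distinguishing weighted $p$-eigenfunction with the correct decay and finite-energy properties on each end, since the degeneracy of the $p$-Laplacian makes the exterior boundary-value problem and the exhaustion limit considerably more delicate than in the linear ($p=2$) case, and one must verify that $p$-nonparabolicity of two ends genuinely yields a \emph{nonconstant} limit whose gradient is controlled uniformly enough to invoke \eqref{sehb}. The restriction on $p$ is a technical artifact of making the energy estimate in the second step close, and pinning down exactly why that range is forced (rather than a larger one) will require care with the constants in \eqref{s2} and \eqref{s28}.
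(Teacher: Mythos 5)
Your proposal takes a genuinely different route from the paper, and as sketched it does not close. The paper does \emph{not} construct a distinguishing $p$-harmonic function from two nonparabolic ends and run a Bochner rigidity argument. Instead, it reduces the problem to the linear ($p=2$) spectral theory: writing the eigenfunction equation as $\Delta u+\langle -\nabla f+(p-2)\nabla\ln|\nabla u|,\nabla u\rangle=-\lambda_{1,p}\frac{u^{p-2}}{|\nabla u|^{p-2}}u$ and applying the Li--Wang weighted Poincar\'e inequality (Proposition \ref{liwa}) with $g=-f+(p-2)\ln|\nabla u|$, then substituting $\phi\mapsto\phi u^{-(p-2)/2}$ and using the sharp bound $|\nabla u|/u\leq\frac{m-1}{p}$ from Corollary \ref{cor23}, it derives $\frac{(m-1)^2}{2p}\int_M\phi^2e^{-f}\leq\int_M|\nabla\phi|^2e^{-f}$, i.e.\ $\lambda_1(M)\geq\frac{(m-1)^2}{2p}$ for the \emph{linear} weighted Laplacian. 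The hypothesis $p\leq\frac{(m-1)^2}{2(m-2)}$ is used precisely here, to guarantee $\lambda_1(M)\geq m-2$ so that $Ric_f^m\geq-(m-1)\geq-\frac{m-1}{m-2}\lambda_1(M)$ and Wang's splitting theorem \cite{W1} applies; it has nothing to do with balancing the constants $a,b,\widehat c,\widetilde c$ of Lemma \ref{lsehb}, contrary to your guess.

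The substantive gap in your argument is the middle step. Testing \eqref{e24} against $\phi^2\omega^q$ and ruling out \eqref{contra} by volume comparison yields $\omega\equiv0$, which is exactly the content of Theorem \ref{2ndtheorem} -- the gradient estimate itself. That conclusion holds on \emph{any} manifold satisfying the hypotheses, independently of how many $p$-nonparabolic ends exist, so it cannot produce the contradiction or the rigidity you need. You assert that "all the inequalities in the gradient estimate become equalities," but $\omega\equiv0$ only says $h\leq x$; it does not force $h\equiv x$ or equality anywhere in the Bochner chain. You would need a separate mechanism by which the nonconstant finite-energy $p$-harmonic function produced from two nonparabolic ends forces saturation (this is what Sung--Wang do in the unweighted case, via a delicate improved Bochner inequality and an energy identity on the ends), and none of that is supplied or reduced to results already in the paper. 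Likewise, invoking the Busemann function of a ray and equality in the Laplacian comparison is the template for the \emph{parabolic}-end theorem (Theorem \ref{theorem1}), where parabolicity supplies the volume decay $V_f(E\setminus B(R))\leq \widehat{c}e^{-(m-1)R}$ that kills the error terms; for a second nonparabolic end that decay is unavailable, so the cut-off argument in \eqref{swe32} does not go through.
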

To prove theorem \ref{nonparabolic}, let us recall a fact on weighted Poincar\'{e} inequality in \cite{Liwang}.
\begin{proposition}[\cite{Liwang}, Proposition 1.1]\label{liwa}
Let $M$ be a complete Riemannian manifold. If there exists a nonnegative function $h$ defined on $M$, that is not identically $0$, satisfying
$$ \Delta h(x)+\left\langle\nabla g, \nabla h\right\rangle(x) \leq - \rho(x)h(x), $$
for some nonnegative function $\rho$, then the weighted Poincar\'{e} inequality
$$ \int_M\rho(x)\phi^2(x)e^{g(x)}\leq \int_M|\nabla\phi|^2(x)e^{g(x)} $$
must be hold true for all compactly supported smooth function $\phi\in{\cal C}_0^\infty(M)$. 
\end{proposition}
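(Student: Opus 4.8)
The plan is to use $h$ itself, through the test function $\phi^2/h$, as a bridge between the differential inequality it satisfies and the desired integral inequality. First I would record that the operator $\Delta + \langle\nabla g,\nabla\cdot\rangle$ is exactly the weighted Laplacian $\Delta_g := e^{-g}\,\mathrm{div}(e^g\nabla\cdot)$ associated with the measure $e^g\,dv$, so that for compactly supported functions one has the integration-by-parts formula $\int_M(\Delta_g u)\,\psi\,e^g=-\int_M\langle\nabla u,\nabla\psi\rangle\,e^g$. In this notation the hypothesis reads $\Delta_g h+\rho h\le 0$, that is, $h$ is a nontrivial nonnegative $g$-superharmonic function. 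Since $\rho,h\ge 0$ we get $\Delta_g h\le 0$, so the strong maximum principle on the connected manifold $M$ forces a nontrivial such $h$ to be strictly positive everywhere; alternatively one may work with $h_\varepsilon:=h+\varepsilon$ and pass to the limit at the end.

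Assuming $h>0$, I would fix $\phi\in C_0^\infty(M)$ and test the inequality $-\Delta_g h\ge\rho h$ against the nonnegative weight $\phi^2/h$ times $e^g$, which gives immediately
$$ \int_M\rho\,\phi^2 e^g=\int_M\frac{\phi^2}{h}(\rho h)\,e^g\le\int_M\frac{\phi^2}{h}(-\Delta_g h)\,e^g. $$
Then I would integrate the right-hand side by parts. Using $\Delta_g h=e^{-g}\,\mathrm{div}(e^g\nabla h)$ together with $\nabla(\phi^2/h)=(2\phi/h)\nabla\phi-(\phi^2/h^2)\nabla h$, this produces
$$ \int_M\frac{\phi^2}{h}(-\Delta_g h)\,e^g=\int_M\Big(\frac{2\phi}{h}\langle\nabla\phi,\nabla h\rangle-\frac{\phi^2}{h^2}|\nabla h|^2\Big)e^g. $$
The decisive step is the elementary bound $\frac{2\phi}{h}\langle\nabla\phi,\nabla h\rangle\le|\nabla\phi|^2+\frac{\phi^2}{h^2}|\nabla h|^2$, obtained from Cauchy--Schwarz followed by $2ab\le a^2+b^2$. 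This makes the two copies of $\phi^2|\nabla h|^2/h^2$ cancel and leaves exactly $\int_M|\nabla\phi|^2 e^g$, so chaining the three displays yields the claimed weighted Poincar\'e inequality.

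The only delicate points are the positivity of $h$, needed in order to divide by $h$, and the legitimacy of the integration by parts. The former I would settle either by the strong maximum principle, or by the $h+\varepsilon$ regularization combined with monotone convergence as $\varepsilon\to 0$, where one uses that the set $\{h=0\}$ is negligible for a nontrivial superharmonic $h$ (so that $\rho h/(h+\varepsilon)\to\rho$ against the measure $e^g\,dv$). The latter is immediate once $\phi$ has compact support and $h$ is bounded below by a positive constant on that support. I expect the maximum-principle/positivity step to be the main technical obstacle, since everything after it is a routine manipulation of the defining differential inequality and requires no further geometric input.
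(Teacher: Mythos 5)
The paper does not prove this proposition at all: it is imported verbatim as Proposition 1.1 of \cite{Liwang}, so there is no internal proof to compare against. Your argument is correct and is, in substance, the classical proof of that result: Li--Wang's original version sets $w=\log h$ and tests $\Delta w\le -\rho-|\nabla w|^2$ against $\phi^2$, which after integration by parts and Cauchy--Schwarz is algebraically identical to your computation with the test function $\phi^2/h$; nothing is gained or lost either way. The two delicate points you flag are handled correctly: strict positivity of $h$ follows from the strong maximum principle for the $g$-superharmonic function $h$ on the (connected) manifold $M$, and once $h$ is bounded below by a positive constant on the compact support of $\phi$ the integration by parts is legitimate. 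The only minor caveat worth recording is regularity: the inequality $\Delta h+\langle\nabla g,\nabla h\rangle\le-\rho h$ may only hold in the weak or distributional sense in the applications (e.g.\ when $h$ is built from a $p$-eigenfunction that is merely $C^{1,\alpha}$), in which case the pairing with $\phi^2/h$ and the subsequent integration by parts should be phrased directly in the weak formulation rather than pointwise; this is a routine adjustment and does not affect the argument.
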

Now we give a proof of theorem \ref{nonparabolic}.
\begin{proof}[Proof of theorem \ref{nonparabolic}]
If $p=2$ then we have $\lambda_1(M)=\lambda_{1,2}=\frac{(m-1)^2}{4}$. Hence, by Theorem 1.5 in \cite{Wu2}, we are done. Therefore, we may assume $p>2$.  

Now let $u$ be a positive weighted $p$-eigenfunction of the weighted $p$-Laplacian satisfying
$$ \Delta_{p,f}u=e^{f}div(e^{-f}|\nabla u|^{p-2}\nabla u)=-\lambda_{1,p}u^{p-1}. $$
This implies
$$ \Delta u+\left\langle-\nabla f+(p-2)\frac{\nabla|\nabla u|}{|\nabla u|}, \nabla u\right\rangle=-\lambda_{1,p}\frac{u^{p-2}}{|\nabla u|^{p-2}}u.  $$
Let 
$$ g=-f+(p-2)\ln|\nabla u|, \quad \text{and }\quad \rho=\lambda_{1,p}\frac{u^{p-2}}{|\nabla u|^{p-2}}. $$
By proposition \ref{liwa}, we obtain
$$ \int_M\lambda_{1,p}\frac{u^{p-2}}{|\nabla u|^{p-2}}\phi^2|\nabla u|^{p-2}e^{-f}\leq\int|\nabla\phi|^2|\nabla u|^{p-2}e^{-f} $$
or equivalently,
$$ \lambda_{1,p}\int_Mu^{p-2}\phi^2e^{-f}\leq\int_M|\nabla u|^{p-2}|\nabla\phi|^2e^{-f},
 $$
for any compactly supported smooth function $\phi$ on $M$. Therefore
$$\begin{aligned}
 \lambda_{1,p}\int_M\phi^2e^{-f}
&=\lambda_{1,p}\int_M\left(\phi u^{-\frac{p-2}{2}}\right)^2u^{p-2}e^{-f}\\
&\leq\int_M\left|\nabla\left(\phi u^{-\frac{p-2}{2}}\right)\right|^2|\nabla u|^{p-2}e^{-f}
\end{aligned} $$
By a direct computation, we have
 \begin{align}
\int_M&\left|\nabla\left(\phi u^{-\frac{p-2}{2}}\right)\right|^2|\nabla u|^{p-2}e^{-f}\notag\\
=&\int_M|\nabla\phi|^2\frac{|\nabla u|^{p-2}}{u^{p-2}}e^{-f}+\phi^2\left|\nabla\left(u^{-\frac{p-2}{2}}\right)\right|^2|\nabla u|^{p-2}e^{-f}\notag\\
&\quad +2\int_M\phi u^{-\frac{p-2}{2}}\left\langle\nabla\phi, \nabla\left(u^{-\frac{p-2}{2}}\right)\right\rangle |\nabla u|^{p-2}e^{-f} \notag\\
=&\int_M|\nabla\phi|^2\frac{|\nabla u|^{p-2}}{u^{p-2}}e^{-f}+\left(\frac{p-2}{2}\right)^2\int_M\frac{|\nabla u|^{p}}{u^p}\phi^2e^{-f}+\frac{1}{2}\int_M\left\langle\nabla\phi^2, \nabla u^{-(p-2)}\right\rangle|\nabla u|^{p-2}e^{-f}.\label{claim0} 
\end{align} 
We claim that
\begin{equation}\label{claim1}
\frac{1}{2}\int\left\langle\nabla\phi^2, \nabla v^{-2}\right\rangle|\nabla u|^{p-2}e^{-f}=-\frac{p-2}{2}\int_M\left(\lambda_{1,p}+(p-1)\frac{|\nabla u|^p}{u^p}\right)\phi^2e^{-f}. 
\end{equation}
Suppose that the claim \eqref{claim1} is verified. By \eqref{claim0}, \eqref{claim1}, we obtain
$$ \begin{aligned}
 \lambda_{1,p}\int_M\phi^2e^{-f}
\leq& \int_M|\nabla\phi|^2\frac{|\nabla u|^{p-2}}{u^{p-2}}e^{-f}+\left(\frac{p-2}{2}\right)^2\int_M\frac{|\nabla u|^{p}}{u^p}\phi^2e^{-f}\\
&\quad -\frac{p-2}{2}\int_M\left(\lambda_{1,p}+(p-1)\frac{|\nabla u|^p}{u^p}\right)\phi^2e^{-f}. 
\end{aligned} $$
This means
$$ \frac{p}{2}\lambda_{1,p}\int_M \phi^2e^{-f}+\frac{p(p-2)}{4}\int_M\phi^2\frac{|\nabla u|^p}{u^p}e^{-f}\leq\int_M\frac{|\nabla u|^{p-2}}{u^{p-2}}|\nabla\phi|^2e^{-f}.$$
Since $\lambda_{1,p}=\left(\frac{m-1}{p}\right)^p$, the corollary \ref{cor23} implies $\frac{\nabla u}{u}\leq\frac{m-1}{p}$. Therefore,
$$ \frac{(m-1)^2}{2p}\int_M\phi^2e^{-f}\leq\int_M|\nabla\phi|^2e^{-f}. $$
Hence, $\lambda_1(M)\geq\frac{(m-1)^2}{2p}$. Here $\lambda_1(M)$ is the first eigenvalue of the weighted Laplacian. Since $p\leq\frac{(m-1)^2}{2(m-2)}$, we have $\lambda_{1}(M)\geq (m-2)$. By a Wang's theorem (see \cite{W1}), we are done.

The rest of the proof is to verify the claim \eqref{claim1}. Indeed, we have
\begin{align}
\frac{1}{2}&\int_M\left\langle\nabla\phi^2, \nabla u^{-(p-2)}\right\rangle|\nabla u|^{p-2}e^{-f}\notag\\
&=-\frac{1}{2}\int_M\Delta_f(u^{-(p-2)})|\nabla u|^{p-2}\phi^2e^{-f}-\frac{1}{2}\int_M\left\langle\nabla u^{-(p-2)}, \nabla|\nabla u|^{p-2}\right\rangle\phi^2e^{-f}.\label{proof1}
\end{align} 
Note that 
$$ e^fdiv\left(e^{-f}|\nabla u|^{p-2}\nabla u\right)=-\lambda_{1,p}u^{p-1} $$
we have
$$ |\nabla u|^{p-2}\Delta_fu+\left\langle\nabla|\nabla u|^{p-2}, \nabla u\right\rangle=-\lambda_{1,p}u^{p-1}.  $$
Therefore,
 \begin{align}
|\nabla u|^{p-2}&\Delta_f(u^{2-p})+\left\langle\nabla(u^{2-p}, \nabla(|\nabla p|^{p-2}))\right\rangle\notag\\
=& (2-p)|\nabla u|^{p-2}u^{1-p}\Delta_fu+(2-p)(1-p)|\nabla u|^{p-2}u^{-p}|\nabla u|^2\notag\\
&\quad +(2-p)u^{1-p}\left\langle\nabla u, \nabla(|\nabla u|^{p-2})\right\rangle\notag\\
&=(2-p)u^{1-p}(-\lambda_{1,p}u^{p-1})+(2-p)(1-p)\frac{|\nabla u|^p}{u^p}=(p-2)\left(\lambda_{1,p}+(p-1)\frac{|\nabla u|^p}{u^p}\right). \label{proof2} 
\end{align} 
By \eqref{proof1} and \eqref{proof2}, the claim \ref{claim1} is verified. 
\end{proof}
Since the above-mentioned result of Wang plays a critical role in the proof of theorem \ref{nonparabolic}, we reformulate it here for reader's convenience.
\begin{theorem}[\cite{W1}]
Let $(M^n, g, e^{-f}dv)$ be a smooth metric measure space of dimension $n\geq3$. Suppose that the lower bound of the spectrum $\lambda_1(M)$ of the weighted Laplacian is positive and 
$$Ric_f^m\geq-\frac{m-1}{m-2}\lambda_1(M).$$
Then either $M$ has only one $p$-nonparabolic end or $M=\RR\times N^{n-1}$ for some compact manifold $N$ of dimension $n-1$ with the product metric
$$ ds_M^2=dt^2+\cosh\ \sqrt[]{\frac{\lambda_1(M)}{m-2}}tds_N^2. $$
\end{theorem}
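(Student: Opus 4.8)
The plan is to follow the splitting strategy of Li--Wang and Sung--Wang (\cite{LW1, LW2, SW}), transplanted to the weighted $p$-harmonic setting and keyed to $p$-nonparabolic ends rather than to the linear operator. Suppose, toward the dichotomy, that $M$ has at least two $p$-nonparabolic ends. The first step is to produce a separating function adapted to the $p$-Laplacian: using the weighted analogue of Li--Tam theory for $\Delta_{p,f}$, each $p$-nonparabolic end $E$ carries a bounded weighted $p$-harmonic function built as a limit of solutions of exterior Dirichlet problems, and it is non-constant precisely because $E$ is $p$-nonparabolic (the $p$-capacity condition in the definition of the preceding section guarantees finite weighted $p$-energy $\int_M|\nabla u|^pe^{-f}<\infty$). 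Combining the functions attached to two distinct $p$-nonparabolic ends yields a non-constant bounded weighted $p$-harmonic $u$ with $\Delta_{p,f}u=0$, normalized so that $0\le u\le 1$ and $\int_M|\nabla u|^pe^{-f}<\infty$. The hypothesis $\lambda_1(M)>0$ supplies the weighted Poincar\'e inequality $\lambda_1(M)\int_M\phi^2e^{-f}\le\int_M|\nabla\phi|^2e^{-f}$ for all $\phi\in C_0^\infty(M)$, which is the analytic engine of the argument.

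Second, I would run the Bochner machinery already assembled in Section~2 on the weighted $p$-harmonic $u$. Writing $h=|\nabla u|^2$ and using the operator $\mathfrak{L}$ together with Lemma~\ref{d2} specialized to $\lambda_{1,p}=0$, the refined Kato inequality adapted to $Ric_f^m$ (the same algebra that produced the $\frac{1}{m-1}h^2$ term in the displayed estimates preceding Lemma~\ref{l1}) gives, on $\{\nabla u\neq 0\}$, a differential inequality of the form
$$\mathfrak{L}(h)\ \ge\ \frac{2}{m-1}\,h^{p/2+1}+2\,h^{p/2-1}Ric_f^m(\nabla u,\nabla u)+(\text{gradient terms}).$$
Inserting the curvature bound $Ric_f^m\ge-\frac{m-1}{m-2}\lambda_1(M)$ replaces the middle term by $-\frac{2(m-1)}{m-2}\lambda_1(M)\,h^{p/2}$, so that the inequality carries the critical constant $\frac{m-1}{m-2}\lambda_1(M)$, which is exactly the value that will make the subsequent balancing sharp rather than merely bounded.

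Third, I would test this inequality against $\phi^2$ times an appropriate power of $h$, integrate by parts (regularizing across the singular set $K=\{\nabla u=0\}$ by the same $(h-\varepsilon)^+$ device used in Section~2, then letting $\varepsilon\to0$), and apply the weighted Poincar\'e inequality to the resulting integrand to absorb the zeroth-order term produced by the curvature. The finiteness of $\int_M|\nabla u|^pe^{-f}$ lets the cutoff exhaust $M$. At the borderline curvature value the two sides close up with no slack, forcing one of two alternatives: either $|\nabla u|\equiv 0$, which contradicts that $u$ separates two ends and hence shows $M$ has only one $p$-nonparabolic end; or every intermediate inequality is an equality.

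Finally, the equality case is where the rigidity is extracted, and this is the main obstacle. Equality in the Kato step forces $\mathrm{Hess}\,u$ into the rigid two-eigenvalue form (one eigenvalue along $\nabla u$, a single eigenvalue on $\nabla u^{\perp}$), equality in the curvature step forces $Ric_f^m(\nabla u,\nabla u)=-\frac{m-1}{m-2}\lambda_1(M)|\nabla u|^2$, and together these make $|\nabla u|$ a function of $u$ alone. Integrating $u$ to arclength $t$ along its gradient flow reduces the metric to a warped product over a level set $N$, and the ODE governing the warping factor, whose coefficient is precisely $\lambda_1(M)/(m-2)$, integrates to $\cosh\!\big(\sqrt{\lambda_1(M)/(m-2)}\,t\big)$, giving $ds_M^2=dt^2+\cosh^2\!\big(\sqrt{\lambda_1(M)/(m-2)}\,t\big)\,ds_N^2$; the finite weighted volume of the slices forces $N$ compact, so $M=\RR\times N^{n-1}$. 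The delicate points I expect to fight are the low $C^{1,\alpha}$ regularity of the weighted $p$-harmonic $u$, so that the splitting must first be established on $M\setminus K$ and then propagated across $K$, and the verification that the constant-chasing in the integration by parts is genuinely saturated at the hypothesized curvature bound.
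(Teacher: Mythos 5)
You should first be aware that the paper does not prove this statement at all: it is quoted verbatim from Wang \cite{W1} (``we reformulate it here for reader's convenience''), and it enters the paper only as a black box in the proof of Theorem \ref{nonparabolic}. The proof one must compare against is therefore Wang's, which is the weighted analogue of Li--Wang's argument for the \emph{linear} operator $\Delta_f$: two nonparabolic ends produce, via Li--Tam theory for $\Delta_f$, a bounded non-constant $f$-harmonic (i.e.\ $p=2$) function $u$ with finite weighted Dirichlet energy; the Bochner formula for $\Delta_f$, the improved Kato inequality $|\mathrm{Hess}\,u|^2+\frac{\langle\nabla f,\nabla u\rangle^2}{m-n}\geq\frac{m}{m-1}\bigl|\nabla|\nabla u|\bigr|^2$ valid for $f$-harmonic functions, and the variational characterization of $\lambda_1(M)$ then force either constancy of $u$ or equality everywhere, and the equality case yields the $\cosh$ warped product.

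Your proposal replaces this linear machinery with weighted $p$-harmonic functions attached to $p$-nonparabolic ends, and that is where the genuine gap lies. The hypothesis is a lower bound on the spectrum $\lambda_1(M)$ of the weighted \emph{linear} Laplacian, i.e.\ an $L^2$ Poincar\'e inequality, while your Bochner machinery produces terms of homogeneity $h^{p/2+1}$, $h^{p/2}$, $h^{p/2-2}|\nabla h|^2$. To absorb the curvature term $-\frac{2(m-1)}{m-2}\lambda_1(M)\int h^{p/2}\phi^2e^{-f}$ you must test the Poincar\'e inequality with $\phi h^{p/4}$, and then the constants coming from the refined Kato inequality for \emph{$p$-harmonic} functions (whose constant depends on $p$ and is not $\frac{m}{m-1}$) have to match the threshold $\frac{m-1}{m-2}\lambda_1(M)$ exactly. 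You assert that ``at the borderline curvature value the two sides close up with no slack,'' but this is precisely the crux, and it is unjustified -- indeed false for general $p$: the constant $\frac{m-1}{m-2}$ is sharp only for the $p=2$ Kato/Bochner interplay. A second symptom of the same mismatch is your equality analysis: the warping ODE it would produce is driven by the $p$-harmonic equation and hence $p$-dependent, whereas the asserted conclusion $\cosh\bigl(\sqrt{\lambda_1(M)/(m-2)}\,t\bigr)$ is $p$-independent and arises from the linear analysis. Note finally that the entire design of Theorem \ref{nonparabolic} in this paper (following Sung--Wang \cite{SW}) exists to avoid exactly what you attempt: one converts the $p$-eigenfunction data into an $L^2$ weighted Poincar\'e inequality, deduces $\lambda_1(M)\geq m-2$, and then invokes Wang's linear theorem as a black box, rather than re-running the Bochner rigidity for the $p$-Laplacian against the $L^2$ spectrum.
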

\section{Bakry-\'{E}mery curvatures and maximality of the weighted $p$-spectrum}
\setcounter{equation}{0}
In this section, we will show the rigidity of smooth metric measure spaces with Bakry-\'{E}mery curvature $Ric_f$. We will estimate a upper bound of $\lambda_{1, p}$ and show that this upper bound is optimal  and $\lambda_{p,f}$ is maximal on gradient staedy Ricci soliton with radial Ricci flatness. Moreover, we will prove that under extra conditions on $Ric_f$ and $f$ if $\lambda_{1, p}$ is maximal then smooth metric measure spaces are splliting. 

\begin{lemma}\label{upper1}
Let $(M, g, e^{-f}dv)$ be a complete noncompact smooth metric measure space with $Ric_f\geq0$. If there exist positive constants $a, b>0$ such that
$$ f(x)\leq ar(x)+b\quad\text{for all }x\in M $$
then we have the upper bound estimate
$$ \lambda_{1,p}(M)\leq\frac{a^{p}}{p^{p}}. $$
Moreove, if $f$ has sublinear growth then $\lambda_{1,p}(M)=0$.
\end{lemma}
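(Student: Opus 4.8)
The plan is to bound $\lambda_{1,p}(M)$ from above through the variational characterization
$$\lambda_{1,p}(M)=\inf_{\phi}\frac{\int_M|\nabla\phi|^pe^{-f}}{\int_M|\phi|^pe^{-f}},$$
the infimum being over nonzero $\phi\in C_0^\infty(M)$, by exhibiting an explicit family of radial competitors. For $\eps>0$ I would take $\phi=e^{-\frac{a+\eps}{p}r}$, where $r(x)=\mathrm{dist}(o,x)$. Since $|\nabla r|=1$ almost everywhere, this function satisfies the pointwise identity $|\nabla\phi|=\frac{a+\eps}{p}\phi$, so $|\nabla\phi|^p=\left(\frac{a+\eps}{p}\right)^p|\phi|^p$ wherever $r$ is smooth. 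Hence the Rayleigh quotient of $\phi$ equals exactly $\left(\frac{a+\eps}{p}\right)^p$, \emph{provided} $\phi$ is a legitimate competitor, i.e. $\phi\in W^{1,p}(M,e^{-f}dv)$ and can be approximated by compactly supported functions. Thus the whole problem reduces to the integrability statement $\int_M e^{-(a+\eps)r}e^{-f}\,dv<\infty$ for every $\eps>0$, which in turn is equivalent to the weighted volume growth bound $\mu_f:=\limsup_{R\to\infty}\frac{1}{R}\ln V_f(B_R)\le a$.

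To establish $\mu_f\le a$ I would use $\mathrm{Ric}_f\ge0$. Writing $m_f=\Delta_f r$ for the weighted mean curvature of geodesic spheres, the Bochner--Riccati computation shows $m_f$ is non-increasing along each minimal geodesic from $o$: differentiating $\Delta_f r=\Delta r-\partial_r f$, the term $\partial_r^2 f$ cancels the Hessian contribution and one is left with $\frac{d}{dr}m_f\le-\frac{(\Delta r)^2}{n-1}-\mathrm{Ric}_f(\partial_r,\partial_r)\le0$. Consequently the limit $L(\theta)=\lim_{r\to\infty}m_f(r,\theta)$ exists, and integrating $\Delta_f r$ over $B_R$ by the divergence theorem gives $V_f'(R)=A_f(R)\le\big(\sup_\theta m_f(R_0,\theta)\big)V_f(R)+C$, whence $\mu_f\le\lim_{R_0\to\infty}\sup_\theta m_f(R_0,\theta)$. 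It then remains to combine this with $f\le ar+b$ to see that this limiting weighted mean curvature is at most $a$.

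The step I expect to be the main obstacle is exactly this last inequality $L\le a$. It cannot be obtained ray by ray from $f\le ar+b$ alone: along a single geodesic one checks (using $\int^\infty(\Delta r)^2<\infty$, a byproduct of the Riccati estimate above) that $L=-\lim_r f/r$, so the upper bound on $f$ only controls how fast $f$ \emph{grows}, while $L$ measures how fast $f$ is allowed to \emph{decrease}. The mechanism that preserves the sharp constant is global: $\mathrm{Ric}_f\ge0$ forces a convexity-type rigidity linking the behaviour of $f$ in different directions. In the one-dimensional model $\mathrm{Ric}_f\ge0$ means $f$ is convex, so $f'$ is monotone and the cap $f\le ar+b$ in one direction forces $|f'|\le a$ everywhere, giving $L\le a$; in general I would run this through the weighted volume comparison of Wei--Wylie type available under $\mathrm{Ric}_f\ge0$, which together with $f\le ar+b$ yields $V_f(B_R)\le Ce^{aR}$ and hence $\mu_f\le a$. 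Feeding this back, $\phi=e^{-\frac{a+\eps}{p}r}\in W^{1,p}(M,e^{-f}dv)$; a standard cut-off $\phi_R=\eta_R\phi$ with $|\nabla\eta_R|\le C/R$ approximates it in $W^{1,p}(M,e^{-f}dv)$ and yields $\lambda_{1,p}(M)\le\left(\frac{a+\eps}{p}\right)^p$, and letting $\eps\downarrow0$ gives $\lambda_{1,p}(M)\le a^p/p^p$.

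For the final assertion, if $f$ has sublinear growth then for every $\eps>0$ there is a constant $b_\eps$ with $f(x)\le\eps\,r(x)+b_\eps$, so the estimate just proved applies with $a=\eps$ and gives $\lambda_{1,p}(M)\le(\eps/p)^p$. Since $\lambda_{1,p}(M)\ge0$ always, letting $\eps\to0$ forces $\lambda_{1,p}(M)=0$.
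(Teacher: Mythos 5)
Your proposal is correct, but it reaches the eigenvalue bound by a different route than the paper. The paper never touches test functions: it assumes $\lambda_{1,p}>0$, invokes Theorem 0.1 of Buckley--Koskela to get the volume growth \emph{lower} bound $V_f(B(R))\geq Ce^{p\lambda_{1,p}^{1/p}R}$, pairs this with the Munteanu--Wang \emph{upper} bound $V_f(B(R))\leq C'R^ne^{aR}$ (valid under $Ric_f\geq0$ and $f\leq ar+b$), and compares exponents as $R\to\infty$. You instead plug the explicit competitors $e^{-\frac{a+\eps}{p}r}$ into the Rayleigh quotient and use the same Munteanu--Wang upper bound only to certify that these competitors are $p$-integrable; in effect you are re-proving the relevant direction of the Buckley--Koskela/Brooks-type relation between volume growth and the bottom of the $p$-spectrum rather than citing it, which makes your argument more self-contained on that side. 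The one external input is identical in both proofs, namely $V_f(B(R))\leq CR^ne^{aR}$, and you correctly identify that this is where $Ric_f\geq0$ enters; your exploratory middle paragraph trying to rederive it ray by ray via the Riccati equation does not close (as you yourself note, the monotone limit of $\Delta_f r$ along a single ray is not controlled by the upper bound on $f$ alone, and $\Delta r$ need not decay), but since you ultimately fall back on the Wei--Wylie/Munteanu--Wang comparison --- exactly the citation the paper uses --- the proof stands. The cut-off step and the treatment of the sublinear case are both fine.
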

\begin{proof}We use the argument in the proof of Lemma \ref{eigenvalue}. Similarly, without loss of generality, we may assume that $\lambda_{1,p}$ is positive. By the variational characterization of $\lambda_{1,p}$, we know that $M$ has infinite $f$-volume. The theorem 0.1 in \cite{BK} implies $M$ is $p$-nonparabolic, moreover 
$$ V_f(B_p(R))\geq Ce^{p\lambda_{1,p}^{1/p}R}, $$
for all sufficiently large $R$ and $C$ is a constant dependent on $r$. On the other hand, by the volume estimate in \cite{MW1}, we have known that
$$ V_f(B_p(R))\leq C'R^{n}e^{aR}, $$
for some constant $C'>0$. Therefore, we have for some constant $C>0$
$$ e^{p\lambda_{1,p}^{1/p}R}\leq CR^ne^{aR}, $$
or equivalently,
$$ \lambda_{1,p}^{1/p}\leq\frac{\ln C+n\ln R}{pR}+\frac{a}{p} $$
for all sufficiently large $R$. Lettting $R\to \infty$, we have
$$ \lambda_{1,p}\leq\left(\frac{a}{p}\right)^p. $$
The proof is complete. 
\end{proof}
To show that the estimate $\lambda_{1,p}(M)$ as in the above is optimal, let us recall the following Picone identity (\cite{alleg}). 
\begin{lemma}[\cite{alleg}]\label{picone}
Let $M$ be a Riemannian manifold and $\Omega\subset M$ is a open subset. Suppose that $v>0, u\geq0$ be differentiable in $\Omega$. Denote
$$ \begin{aligned}
L(u, v)&=|\nabla u|^{p}+(p-1)\frac{u^{p}}{v^{p}}|\nabla v|^{p}-p\frac{u^{p-1}}{v^{p-1}}\left\langle\nabla u, |\nabla v|^{p-2}\nabla v\right\rangle\\
R(u, v)&=|\nabla u|^{p}-\left\langle\nabla\left(\frac{u^{p}}{v^{p-1}}\right), |\nabla v|^{p-2}\nabla v\right\rangle.  
\end{aligned} $$
Then 
$$ L(u, v)= R(u, v). $$
Moreover, $L(u, v)\geq0$, and $L(u, v)=0$ a.e. $\Omega$ if and only if $\nabla(u/v)=0$ a.e. $\Omega$, i.e.
$u=kv$ for some constant $k$ in each component of $\Omega$.
\end{lemma}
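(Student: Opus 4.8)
The plan is to prove the two assertions separately: first the pointwise algebraic identity $L(u,v)=R(u,v)$, and then the nonnegativity of $L$ together with its equality characterization. The identity requires only expanding the gradient appearing in $R$. Since $v>0$ on $\Omega$, one computes
$$ \nabla\left(\frac{u^{p}}{v^{p-1}}\right) = p\frac{u^{p-1}}{v^{p-1}}\nabla u - (p-1)\frac{u^{p}}{v^{p}}\nabla v, $$
and pairing with $|\nabla v|^{p-2}\nabla v$ turns the last term of $R(u,v)$ into $p\frac{u^{p-1}}{v^{p-1}}\langle\nabla u, |\nabla v|^{p-2}\nabla v\rangle - (p-1)\frac{u^{p}}{v^{p}}|\nabla v|^{p}$. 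Substituting this back into the definition of $R$ recovers the three terms of $L(u,v)$ verbatim, so the identity holds.

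For the inequality the key device is the substitution $X:=\nabla u$ and $Y:=\frac{u}{v}\nabla v$, valid at points where $u>0$; at points with $u=0$ we have $L(u,v)=|\nabla u|^{p}\geq 0$ directly. Using $|\nabla v|^{p-2}\nabla v=\frac{v^{p-1}}{u^{p-1}}|Y|^{p-2}Y$ (which follows from $\nabla v=\frac{v}{u}Y$), the cross term collapses to $p|Y|^{p-2}\langle X,Y\rangle$ and one arrives at the clean expression
$$ L(u,v)=|X|^{p}+(p-1)|Y|^{p}-p|Y|^{p-2}\langle X,Y\rangle. $$
Bounding $\langle X,Y\rangle\leq |X|\,|Y|$ by Cauchy--Schwarz reduces the claim to the scalar inequality $s^{p}+(p-1)t^{p}-p\,t^{p-1}s\geq 0$ for $s,t\geq 0$, which is exactly Young's inequality $p\,s\,t^{p-1}\leq s^{p}+(p-1)t^{p}$ with conjugate exponents $p$ and $\frac{p}{p-1}$. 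This establishes $L(u,v)\geq 0$.

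The equality analysis is the only delicate point, and I expect it to be the main obstacle. Vanishing of $L(u,v)$ forces equality \emph{simultaneously} in Cauchy--Schwarz and in Young's inequality: the former requires $X$ and $Y$ to be nonnegatively proportional, while the latter requires $|X|=|Y|$. Combining the two gives $X=Y$, that is $\nabla u=\frac{u}{v}\nabla v$, equivalently $v\nabla u-u\nabla v=0$, which says precisely $\nabla(u/v)=0$. Since this holds almost everywhere on each connected component of $\Omega$, the function $u/v$ is constant there, so $u=kv$ for some constant $k$ on each component. The care needed is in tracking the two equality conditions together and in handling the measure-zero exceptional set $\{u=0\}\cup\{\nabla v=0\}$ so that the ``almost everywhere'' statement is stated cleanly; no genuinely hard estimate is involved.
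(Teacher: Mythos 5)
The paper does not prove this lemma at all: it is quoted from \cite{alleg} and used as a black box, so there is no internal proof to compare against. Your argument is correct and is essentially the standard proof from that reference: the identity $L=R$ follows from expanding $\nabla(u^p/v^{p-1})$ exactly as you do, and the substitution $X=\nabla u$, $Y=\frac{u}{v}\nabla v$ reduces nonnegativity to Cauchy--Schwarz plus Young's inequality $p\,s\,t^{p-1}\le s^p+(p-1)t^p$, with equality forcing $X=Y$ and hence $\nabla(u/v)=0$. One small remark on your closing paragraph: the set $\{u=0\}$ need not have measure zero, but no exceptional set actually needs to be discarded --- at such points $L=|\nabla u|^p$, so $L=0$ forces $\nabla u=0$ and therefore $\nabla(u/v)=(v\nabla u-u\nabla v)/v^2=0$ there as well, and similarly $Y=0$ forces $X=0$; so the characterization holds pointwise wherever $L$ vanishes, and the ``a.e.'' is only needed to pass from $\nabla(u/v)=0$ to constancy of $u/v$ on each component.
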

Now, we can give a lower bound of $\lambda_{1, p}$ as follows.
\begin{lemma}\label{lowerbound}
Let $(M, g, e^{-f}dv)$ be a smooth metric measure space. If there exists
a positive function $v>0$ such that $\Delta_{p, f}v\leq-\lambda v^{p-1}$ for some constant $\lambda>0$, then
$$ \lambda_{1,p}(M)\geq\lambda. $$
\end{lemma}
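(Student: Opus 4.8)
The plan is to combine the variational characterization of $\lambda_{1,p}(M)$ with the Picone identity recorded in Lemma \ref{picone}. Recall that
$$\lambda_{1,p}(M)=\inf_\phi\frac{\int_M|\nabla\phi|^pe^{-f}}{\int_M|\phi|^pe^{-f}},$$
the infimum running over all nonzero compactly supported Lipschitz functions $\phi$. Since $|\nabla|\phi||=|\nabla\phi|$ almost everywhere, it suffices to prove $\int_M|\nabla u|^pe^{-f}\geq\lambda\int_M u^pe^{-f}$ for every nonnegative compactly supported $u$, and then pass to the infimum.

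First I would record the weak form of the hypothesis. Read in the distribution sense, exactly as in the definition following \eqref{flap}, the inequality $\Delta_{p,f}v\leq-\lambda v^{p-1}$ means
$$\int_M\langle|\nabla v|^{p-2}\nabla v,\nabla\psi\rangle e^{-f}\geq\lambda\int_M v^{p-1}\psi e^{-f}$$
for all nonnegative test functions $\psi$. Fix a nonnegative $u\in C_0^\infty(M)$ and set $\psi=u^p/v^{p-1}$. Since $v>0$ is continuous and $u$ has compact support, $v$ is bounded below by a positive constant on $\mathrm{supp}\,u$, so $\psi$ is an admissible nonnegative compactly supported function lying in $W^{1,p}$. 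Checking this admissibility is the only genuinely technical point, and it is the step I expect to require the most care: one must make sure the possible degeneracy of $|\nabla v|^{p-2}\nabla v$ on $\{\nabla v=0\}$ causes no trouble, which it does not, precisely because everything is phrased through the weak inequality above rather than through pointwise second-order quantities.

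Next I would invoke Picone's identity applied to the pair $(u,v)$. Because $L(u,v)=R(u,v)$ and $L(u,v)\geq0$, we have the pointwise bound
$$|\nabla u|^p\geq\left\langle\nabla\left(\frac{u^p}{v^{p-1}}\right),|\nabla v|^{p-2}\nabla v\right\rangle.$$
Integrating this against $e^{-f}$ and then feeding the weak inequality with the test function $\psi=u^p/v^{p-1}$ into the right-hand side yields
$$\int_M|\nabla u|^pe^{-f}\geq\int_M\left\langle|\nabla v|^{p-2}\nabla v,\nabla\psi\right\rangle e^{-f}\geq\lambda\int_M v^{p-1}\psi e^{-f}=\lambda\int_M u^pe^{-f},$$
where the final equality is just $v^{p-1}\psi=u^p$. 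As $u$ was an arbitrary nonnegative compactly supported function, the variational characterization of $\lambda_{1,p}(M)$ gives $\lambda_{1,p}(M)\geq\lambda$, which completes the argument. In short, once the weak reading of the differential inequality and the admissibility of $\psi=u^p/v^{p-1}$ are in place, the conclusion is an immediate consequence of the sign $L(u,v)\geq0$ in Picone's identity.
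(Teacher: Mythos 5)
Your proposal is correct and follows essentially the same route as the paper: both rest on the Picone identity of Lemma \ref{picone} with the test pair $(u,v)$, the nonnegativity $L(u,v)=R(u,v)\geq 0$, and the pairing of $u^p/v^{p-1}$ against the differential inequality $\Delta_{p,f}v\leq-\lambda v^{p-1}$; the paper performs the integration by parts explicitly over an exhaustion $\{\Omega_j\}$ and lets $j\to\infty$, while you absorb that step into the weak formulation, which is the same computation.
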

\begin{proof}
Let $\left\{\Omega_j\right\}_{j=1}^{\infty}$ be a exhaustion of $M$ by compact domains $\Omega_j$. Choose $\varphi\in{\mathcal C}^{\infty}_0(\Omega_j), \varphi\geq0$. Then, we have
$$ \begin{aligned}
0\leq \int_{\Omega_j}L(\varphi, v)e^{-f}dv
&=\int_{\Omega_j}R(\varphi, v)=\int_{\Omega_j}|\nabla \varphi|^{p}e^{-f}dv-\int_{\Omega_j}\left\langle\nabla\left(\frac{\varphi^{p}}{v^{p-1}}\right), |\nabla v|^{p-2}\nabla v\right\rangle e^{-f}dv\\
&=\int_{\Omega_j}|\nabla \varphi|^{p}e^{-f}dv+\int_{\Omega_j}\frac{\varphi^{p}}{v^{p-1}}\Delta_{p,f}ve^{-f}dv\\
&\leq\int_{\Omega_j}|\nabla \varphi|^{p}e^{-f}dv-\lambda\int_{\Omega_j}\varphi^{p}e^{-f}dv.
\end{aligned} $$
This shows that $\lambda\leq\lambda_{1,p}(\Omega_j)$. Since $\lambda_{1,p}(M)=\lim\limits_{j\to\infty}\lambda_{1, p}(\Omega_j)$. The proof is complete.
\end{proof}
Recall that a gradient steady Ricci soliton has the below properties
\begin{equation}\label{gsrs}
\begin{cases}
|\nabla f|^{2}+S&=a^{2},\quad\text{ for some constant }a>0\\
\Delta f+S&=0\\
S&\geq0
\end{cases}
\end{equation}
where $S$ is the scalar curvature of $M$. Moreover, $Ric+Hess f=0$. As in \cite{pw}, a gradient Ricci soliton is called to have radial Ricci flatness if 
$$ Ric(\nabla f, \nabla f)=0. $$
In \cite{pw}, Petersen and Wylie pointed out that a compact gradient steady Ricci soliton has rigidity property provided the radial Ricci flatness. The following theorem tells us the sharpness of the upper bound of $\lambda_{1,p}(M)$ on noncompact gradient steady Ricci flat.  
\begin{theorem}\label{sharp1}
Let $(M, g, f)$ be a noncompact gradient steady Ricci soliton with radial Ricci flatness normalized as \eqref{gsrs}. Then 
$$ \lambda_{1,p}=\frac{a^{p}}{p^{p}} $$
provided that one of the following conditions holds true 
\begin{description}
\item (i) $1<p<2$
\item (ii) $p\geq 2$ and $|\nabla f|\geq a\ \sqrt[]{\frac{p-2}{p-1}}$.
\end{description}
\end{theorem}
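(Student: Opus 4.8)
Theorem 5.8 asserts that on a noncompact gradient steady Ricci soliton $(M,g,f)$ with radial Ricci flatness, normalized as in \eqref{gsrs}, the first weighted $p$-eigenvalue attains its maximal value $\lambda_{1,p}=a^p/p^p$, under either $1<p<2$ or $p\geq 2$ with $|\nabla f|\geq a\sqrt{(p-2)/(p-1)}$.

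Let me plan the proof.

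The strategy is clear: combine the **upper bound** from Lemma 5.1 with a matching **lower bound** from Lemma 5.4 (the Picone-based criterion). Lemma 5.1 gives $\lambda_{1,p}\leq (a/p)^p$ once we confirm $f$ has at most linear growth with rate $a$ — and on a steady soliton with $|\nabla f|^2 + S = a^2$ and $S\geq 0$ we get $|\nabla f|\leq a$, so $f(x)\leq ar(x)+b$ holds by integrating along geodesics. That half is free.

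For the lower bound, the plan is to exhibit an explicit positive test function $v$ with $\Delta_{p,f}v \leq -\lambda v^{p-1}$ for $\lambda = (a/p)^p$, then invoke Lemma 5.4. The natural candidate, guided by the warped-product example and the structure of the problem, is $v = e^{-f/p}$ (so that $\nabla v = -\tfrac{1}{p}e^{-f/p}\nabla f$ and $|\nabla v| = \tfrac{1}{p}|\nabla f|\,v$). I would compute directly: $\Delta_{p,f}v = e^f\,\mathrm{div}(e^{-f}|\nabla v|^{p-2}\nabla v)$, using the soliton identities $\Delta f = -S = |\nabla f|^2 - a^2$ and $Ric + \mathrm{Hess}\,f = 0$. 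The term $\langle \nabla |\nabla v|^{p-2}, \nabla v\rangle$ produces $\mathrm{Hess}\,f(\nabla f,\nabla f)$, which by the Ricci-soliton equation equals $-Ric(\nabla f,\nabla f) = 0$ precisely by the radial Ricci flatness hypothesis. This is where the hypothesis enters and simplifies the Hessian term to zero. Assembling the pieces, the claim is that $\Delta_{p,f}(e^{-f/p})$ reduces to $\big(\tfrac{a}{p}\big)^{p-1}\!\big(\tfrac{|\nabla f|}{a}\big)^{p-2}\!\big(|\nabla f|^2 - a^2\big)\cdot(\text{positive factor})\cdot v^{p-1}$ plus controlled terms; since $|\nabla f|^2 \leq a^2$, the sign works out to give $\Delta_{p,f}v \leq -(a/p)^p v^{p-1}$.

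The main obstacle — and the reason for the two-case split — is the sign and magnitude of the contribution from $|\nabla f|^2 - a^2 = -S \leq 0$ weighted against the factor $(p-1)$ coming from $|\nabla v|^{p-2}$ differentiated. When $1<p<2$ the exponent $p-2<0$ makes the relevant inequality go the favorable direction automatically. When $p\geq 2$, controlling the term that carries a factor $(p-2)$ requires the pointwise lower bound $|\nabla f|\geq a\sqrt{(p-2)/(p-1)}$ to ensure the net coefficient stays nonpositive; this is exactly condition (ii). I would therefore carry out the Bochner-type expansion of $\Delta_{p,f}(e^{-f/p})$ carefully, isolate the coefficient multiplying $S = a^2 - |\nabla f|^2$, and verify in each case that the hypothesis forces $\Delta_{p,f}v + (a/p)^p v^{p-1}\leq 0$. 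With that inequality in hand, Lemma 5.4 yields $\lambda_{1,p}\geq (a/p)^p$, and combined with the upper bound from Lemma 5.1 the equality $\lambda_{1,p}=a^p/p^p$ follows.
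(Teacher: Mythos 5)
Your overall strategy is exactly the paper's: the upper bound comes from Lemma \ref{upper1} (since $|\nabla f|^2=a^2-S\le a^2$ gives linear growth rate $a$), the lower bound comes from Lemma \ref{lowerbound} applied to an explicit exponential of $f$, radial Ricci flatness is used precisely to kill the term $\langle\nabla|\nabla f|^{p-2},\nabla f\rangle=(p-2)|\nabla f|^{p-4}\,\mathrm{Hess}f(\nabla f,\nabla f)=-(p-2)|\nabla f|^{p-4}Ric(\nabla f,\nabla f)=0$, and the dichotomy (i)/(ii) arises from the sign analysis of the resulting one-variable expression. However, your test function has the wrong sign in the exponent, and this is not a harmless convention: the computation fails for $v=e^{-f/p}$. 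Indeed, with $\nabla v=-\tfrac1p v\nabla f$ and the soliton identities $\Delta f=-S=|\nabla f|^2-a^2$, $\Delta_f f=-a^2$, one gets
\begin{equation*}
\Delta_{p,f}\bigl(e^{-f/p}\bigr)=\frac{v^{p-1}}{p^{p-1}}\Bigl(a^2|\nabla f|^{p-2}+\frac{p-1}{p}|\nabla f|^{p}\Bigr)\;\ge\;0,
\end{equation*}
so $e^{-f/p}$ can never satisfy $\Delta_{p,f}v\le-\lambda v^{p-1}$ with $\lambda>0$, and Lemma \ref{lowerbound} is inapplicable. The paper takes $v=e^{+f/p}$, for which the same computation yields
\begin{equation*}
\Delta_{p,f}\bigl(e^{f/p}\bigr)=\frac{v^{p-1}}{p^{p-1}}\Bigl(-a^2|\nabla f|^{p-2}+\frac{p-1}{p}|\nabla f|^{p}\Bigr).
\end{equation*}

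With this correction the rest of your plan goes through verbatim: setting $g(x)=-a^2x^{p-2}+\tfrac{p-1}{p}x^p$ on $(0,a]$, one has $g(a)=-a^p/p$ and $g'(x)=x^{p-3}\bigl((p-1)x^2-(p-2)a^2\bigr)$, so $g$ is increasing on all of $(0,a]$ when $1<p<2$, and increasing on $[a\sqrt{(p-2)/(p-1)},\,a]$ when $p\ge2$; in either case $g(|\nabla f|)\le-a^p/p$ under the stated hypotheses, whence $\Delta_{p,f}v\le-(a/p)^pv^{p-1}$ and $\lambda_{1,p}\ge(a/p)^p$. Your heuristic that the case split is governed by the factor $p-2$ is correct, but note it is the monotonicity of $g$ (not merely the sign of a single term) that delivers the bound. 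One further point you share with the paper but should at least acknowledge: the argument is carried out where $\nabla f\ne0$, and the factor $|\nabla f|^{p-2}$ is singular at critical points of $f$ when $p<2$; a remark on why the distributional inequality needed in Lemma \ref{lowerbound} survives there would strengthen the write-up.
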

\begin{proof}
Since $S\geq0$, we infer $|\nabla f|\leq a$. Hence, Lemma \ref{upper1} implies
$$ \lambda_{1, p}(M)\leq\left(\frac{a}{p}\right)^{p}. $$
We will show that the inverse inequality is also  true. Indeed, let $v=e^{\frac{1}{p}f}$, it is easy to compute that
$$ \begin{aligned}
\Delta_{p,f}v=\frac{1}{p^{p-1}}\left(|\nabla f|^{p-2}\Delta_f+\frac{p-1}{p}|\nabla f|^{p}\right)v^{p-1}+\frac{v^{p-1}}{p^{p-1}}\left\langle\nabla|\nabla f|^{p-2}, \nabla f\right\rangle. 
\end{aligned} $$
Since $M$ is a gradient steady Ricci soliton,
$$ \Delta_ff=\Delta_f-|\nabla f|^{2}=-(S+|\nabla f|^{2})=-a^{2}. $$
This follows that
$$\begin{aligned}
 \Delta_{p,f}v
&=\frac{1}{p^{p-1}}\left(-a^{2}|\nabla f|^{p-2}+\frac{p-1}{p}|\nabla f|^{p}\right)v^{p-1}+(p-2)\frac{v^{p-1}}{p^{p-1}}|\nabla f|^{p-3}Hess(\nabla f, \nabla f)\\
&= \frac{1}{p^{p-1}}\left(-a^{2}|\nabla f|^{p-2}+\frac{p-1}{p}|\nabla f|^{p}\right)v^{p-1}. 
\end{aligned}$$
Here we used $M$ is a gradient Ricci soliton and radial Ricci flatness of $M$ in the last equality.

Observe that if either $1< p <2$ and $0<x\leq a$ or; $p\geq 2$ and $a\ \sqrt[]{\frac{p-2}{p-1}}\leq x\leq a$, then we have 
$$ -a^{2}x^{p-2}+\frac{p-1}{p}x^{p}\leq-\frac{a^{p}}{p}. $$
Hence, we conclude that $\Delta_{p, f}v\leq -\frac{a^{p}}{p^{p}}v^{p-1}$.  By Lemma \eqref{lowerbound}, this implies $\lambda_{1, p}\geq\left(\frac{a}{p}\right)^{p}$. The proof is complete.
\end{proof}
\begin{lemma}\label{weightedend}
Let $(M, g, e^{-f}dv)$ be a smooth metric measure space with $Ric_f\geq0$, Then $M$ has at most one weighted $p$-nonparabolic end.
\end{lemma}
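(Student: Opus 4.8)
The plan is to argue by contradiction using the Li--Tam theory of ends, reducing the statement to a Liouville-type vanishing property for the weighted $p$-Laplacian. Suppose $M$ admitted two distinct weighted $p$-nonparabolic ends $E_1$ and $E_2$. Then the standard capacity/barrier construction (the weighted analogue of the arguments in \cite{BK} and \cite{SW}) produces a nonconstant, bounded weighted $p$-harmonic function $u$ on $M$ with finite weighted $p$-energy $\int_M|\nabla u|^pe^{-f}dv<\infty$: on an exhaustion $\{\Omega_j\}$ one solves $\Delta_{p,f}u_j=0$ with boundary value $1$ on $\partial\Omega_j\cap E_1$ and $0$ on the rest of $\partial\Omega_j$, and the $p$-nonparabolicity of both ends guarantees that $u=\lim_ju_j$ is nonconstant with $0\le u\le1$ and finite $p$-energy. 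Hence it suffices to show that any bounded weighted $p$-harmonic function of finite $p$-energy on a space with $Ric_f\ge0$ must be constant.

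For the vanishing step I would start from the Bochner identity of Lemma \ref{d2} with $\lambda_{1,p}=0$. Writing $h=|\nabla u|^2$ and choosing the frame with $e_1=\nabla u/|\nabla u|$, one has $\sum_iu_{1i}^2=\tfrac14|\nabla h|^2/h$, so that $u_{ij}^2\ge\tfrac14|\nabla h|^2/h$; combining this with $Ric_f(\nabla u,\nabla u)\ge0$ gives, on $\{h>0\}$,
$$ {\cal L}_f(h)\ \ge\ \frac{p-1}{2}\,h^{p/2-2}|\nabla h|^2\ \ge\ 0. $$
Thus $h$ is a nonnegative ${\cal L}_f$-subsolution \emph{using only the $\infty$-Bakry--\'{E}mery bound $Ric_f\ge0$}. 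I would then test this weak inequality against $h^{b}\eta^2$ with a logarithmic cutoff $\eta$ supported on $B(2R)\setminus B(R)$, integrate by parts exactly as in the derivation of \eqref{e25}, and let $R\to\infty$. The finite $p$-energy and the boundedness of $u$ (which, as in \cite{Bri} and \cite{MW1}, is what absorbs the zeroth-order drift contribution) should force the cutoff terms to zero, yielding $\nabla h\equiv0$ and hence $h\equiv0$ on the noncompact $M$; therefore $u$ is constant, contradicting its construction.

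The main obstacle is precisely closing this vanishing step under the weaker hypothesis $Ric_f\ge0$ rather than $Ric_f^m\ge0$. In Section 2 the crucial positive reaction term $\tfrac{2}{m-1}h^{p/2+1}$ in \eqref{e25} arose from the $m$-dimensional trace estimate on $u_{ij}^2$ and is no longer available, so the integration by parts must be organized so that the first-order drift term $-p\,h^{p/2-1}\langle\nabla u,\nabla h\rangle$ inside ${\cal L}_f$ is controlled purely by the gradient term $\tfrac{p-1}{2}h^{p/2-2}|\nabla h|^2$ together with the boundedness of $u$ and the finiteness of the $p$-energy. A secondary technical point is the critical set $\{\nabla u=0\}$, where $u$ fails to be smooth; following Section 2 this is handled by working on $\{h>\varepsilon\}$, testing against $(h-\varepsilon)^+$-type functions and passing $\varepsilon\to0$. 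An alternative to the Bochner route would be to derive a weighted Poincar\'e inequality via Proposition \ref{liwa} from the barrier function and play it against the $f$-volume growth coming from the volume comparison \eqref{volumecomparison}, but the Bochner argument seems the more direct.
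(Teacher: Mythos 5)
Your first step coincides with the paper's: two weighted $p$-nonparabolic ends yield, via the Li--Tam construction, a nonconstant bounded weighted $p$-harmonic function $v$ with $\int_M|\nabla v|^pe^{-f}dv<\infty$, so everything reduces to a vanishing theorem. The gap is in the vanishing step, and it is not merely technical. You propose to run the Bochner argument on $h=|\nabla u|^2$ with $u=-(p-1)\log v$ through the operator ${\cal L}_f$ of Lemma \ref{d2}, but the quantities you then invoke do not match this choice: the Li--Tam function $v$ is bounded but not bounded away from $0$, so $u$ is \emph{not} bounded, and the finite $p$-energy hypothesis controls $\int_M|\nabla v|^pe^{-f}$, not $\int_Mh^{p/2}e^{-f}=(p-1)^p\int_M|\nabla\log v|^pe^{-f}$. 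Moreover, the drift term $-p\,h^{p/2-1}\left\langle\nabla u,\nabla\psi\right\rangle$ in ${\cal L}_f$ is there precisely because $u$ satisfies $\Delta_{p,f}u=|\nabla u|^p$; after integration by parts it produces a term of the form $\int h^{p/2-1}\left\langle\nabla u,\nabla h\right\rangle\psi$, and absorbing it by Cauchy--Schwarz into $\frac{p-1}{2}h^{p/2-2}|\nabla h|^2$ costs a term of order $h^{p/2+1}$ --- exactly the reaction term that under $Ric_f^m\geq0$ is supplied by the trace estimate $\frac{2}{m-1}h^{p/2+1}$ in \eqref{e25} and that is unavailable under the weaker hypothesis $Ric_f\geq0$. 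You correctly flag this as ``the main obstacle'', but you do not resolve it, so the proof is not closed.

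The paper avoids the logarithmic substitution entirely. It applies the Bochner--Weitzenb\"ock formula to the $1$-form $|dv|^{p-2}dv$ of the $p$-harmonic function $v$ itself (inequality \eqref{star1}), where weighted $p$-harmonicity gives $\delta_f(|dv|^{p-2}dv)=0$ and a Kato-type inequality gives $|\nabla(|dv|^{p-2}dv)|^2\geq|\nabla(|dv|^{p-1})|^2$; since there is no zeroth- or first-order source term left, $Ric_f\geq0$ alone suffices. Testing against $|dv|^{2-p}\psi_\eps\varphi^2$ and estimating the exterior-derivative terms leads to $\int_{M_+}\varphi^2|dv|^{p-2}|\nabla|dv||^2e^{-f}\leq CR^{-2}\int_M|dv|^pe^{-f}$, and here the finite $p$-energy is exactly the right quantity to send the right-hand side to $0$. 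To salvage your outline you should carry out the Bochner computation for $|\nabla v|$ (or the form $|dv|^{p-2}dv$) rather than for $|\nabla\log v|$.
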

\begin{proof}Suppose $M$ has two f-nonparabolic ends. Then $M$ admits
a positive non-constant bounded weighted $p$-harmonic function $v$ with
$\int_M|\nabla v|^{p}e^{-f}dv<\infty$. In fact, this kind of result was first discovered by Li and Tam in \cite{litam} (see also \cite{CCW, MW1}). The Bochner formular for the form $|dv|^{p-2}dv$ implies
\begin{align}
\frac{1}{2}\Delta_f(||dv|^{p-2}dv|^{2})
&=|\nabla(|dv|^{p-2}dv)|^{2}-\left\langle(\delta_f d+d\delta_f)(|dv|^{p-2}dv), |dv|^{p-2}dv\right\rangle\notag\\
&\quad+Ric_f(|dv|^{p-2}dv, |dv|^{p-2}dv). \notag\\
&\geq |\nabla(|dv|^{p-2}dv)|^{2}-\left\langle \delta_f d(|dv|^{p-2}dv), |dv|^{p-2}dv\right\rangle\notag\\
&\geq |\nabla(|dv|^{p-1})|^{2}-\left\langle \delta_f d(|dv|^{p-2}dv), |dv|^{p-2}dv\right\rangle\notag\\
&=(p-1)^{2}|dv|^{2(p-2)}|\nabla|dv||^{2}-\left\langle \delta_f d(|dv|^{p-2}dv), |dv|^{p-2}dv\right\rangle
\label{star1}
\end{align}
Here $\delta_f=\delta +i_{\nabla f}$ is the dual of $d$ with respect to $e^{-f}dv$, we denoted $1$-form and its dual vector field by $dv$ in an abuse of notation. We also used $v$ is weighted $p$-harmonic in the first inequality.

Let $M_+:=\left\{x\in M: |dv|(x)\not=0\right\}$ and $\varphi\in{\mathcal C}_0^{\infty}(M)$ such that $0\leq\varphi\leq1, \varphi=1$ on $B(p, 2R)$, $|\nabla\phi|\leq\frac{2}{R}$ on $B(p, 2R)$. For any $\eps>0$, we denife
$$ \psi_\eps=\begin{cases}
\dfrac{|du|(x)}{\max\left\{|du|(x), \eps\right\}}&\quad\text{ on }M_+\\
0&\quad\text{ on }M\setminus M_+.
\end{cases} $$
As in \cite{nak}, $\psi_\eps\in L_0^{1,2}(M_+)$, where $M_+$ is the completion of ${\cal C}_0^{\infty}(M_+)$. Moreover, $\psi_\eps\to 1, \nabla\psi_\eps\to 0$ in $L_0^{1, 2}(M_+)$. Multiplying both sides of \eqref{star1} by $|dv|^{2-p}\psi_\eps\varphi^2e^{-f}$ then integrate the obtained result over $M_+$, we have
\begin{align}
2(p-1)^{2}&\int_{M_+}|dv|^{p-2}|\nabla|dv||^{2}\psi_\eps\varphi^{2}e^{-f}\notag\\
&\leq\int_{M_+}|dv|^{2-p}\psi_\eps\varphi^{2}\Delta_f(|dv|^{2(p-1)})e^{-f}
 +\int_{M_+}\left\langle \delta_fd(|dv|^{p-2}dv), \psi_\eps\varphi^{2}dv\right\rangle e^{-f}\notag \\
&=-\int_{M_+}\left\langle\nabla(|dv|^{2-p}\psi_\eps\varphi^{2}), \nabla(|dv|^{2(p-1)})\right\rangle e^{-f}
+\int_{M_+}\left\langle d(|dv|^{p-2}dv), d(\psi_\eps\varphi^{2}dv)\right\rangle e^{-f} \label{star2}
\end{align}
It is proved in \cite{Rimoldi} that $|dA\wedge dA|\leq|dA||dB|$ for any $1$-forms $dA, dB$. Hence, we can estimate the second term of the right hand side of \eqref{star2} as follows.
$$ \begin{aligned}
\int_{M_+}\left\langle d(|dv|^{p-2}dv), d(\psi_\eps\varphi^{2}dv)\right\rangle e^{-f}
&=\int_{M_+}\left\langle d(|dv|^{p-2})\wedge dv, d(\psi_\eps\varphi^{2})\wedge dv\right\rangle e^{-f}\\
&\leq\int_{M_+}|d(|dv|^{p-2})\wedge dv|.|d(\psi_\eps\varphi^{2})\wedge dv|e^{-f}\\
&\leq\int_{M_+}|d(|dv|^{p-2})||dv|^{2}|\nabla(\psi_\eps\varphi^{2})|e^{-f}\\
&=|p-2|\int_{M_+}|dv|^{p-1}||\nabla|dv||\nabla(\psi_\eps\varphi^{2})|e^{-f}.
\end{aligned} $$
The first term of the right hand side of \eqref{star2} can be estimated by
$$ \begin{aligned}
-\int_{M_+}&\left\langle\nabla(|dv|^{2-p}\psi_\eps\varphi^{2}), \nabla(|dv|^{2(p-1)})\right\rangle e^{-f}\\
&=-\int_{M_+}\left\langle \nabla(|dv|^{2-p}), \nabla(|dv|^{2(p-1)})\right\rangle \psi_\eps\varphi^{2}e^{-f}-\int_{M_+}|dv|^{2-p}\left\langle\nabla(\psi_\eps\varphi), \nabla(|dv|^{2(p-1)})\right\rangle e^{-f}  \\
&\leq 2(p-1)(p-2)\int_{M_+}|dv|^{p-2}|\nabla|dv||^{2}\psi_\eps\varphi^{2}e^{-f}+2(p-1)\int_{M_+}|dv|^{p-1}|\nabla|dv|||\nabla(\psi_\eps\varphi^{2})|e^{-f}.
\end{aligned} $$
Combining the above two inequalities and \eqref{star2}, we conclude that there is a constant $C=C(p)$ depending only on $p$, such that
$$\begin{aligned}
 2(p-1)&\int_{M_+}\varphi^{2}|dv|^{p-2}|\nabla|dv||^{2}\psi_\eps\varphi^{2}e^{-f}\\
&\leq C\int_{M_+}|dv|^{p-1}|\nabla|dv|||\nabla(\psi_\eps\varphi^{2})|e^{-f} \\
&\leq C\int_{M_+}|dv|^{p-1}|\nabla|dv|||\nabla\psi_\eps|\varphi^{2}e^{-f}+2C\int_{M_+}|dv|^{p-1}|\nabla|dv|||\nabla\varphi|\psi_\eps e^{-f}.
\end{aligned}$$
Let $\eps\to0$ we have that 
$$ 2(p-1)\int_{M_+}\varphi^{2}|dv|^{p-2}|\nabla|dv||^{2}e^{-f}\leq 2C\int_{M_+}|dv|^{p-1}|\nabla|dv|||\nabla\varphi|e^{-f} $$
for some positive constant $C$. Using the following fundamental inequality
$$ 2|dv|^{p-1}|\nabla|dv|||\nabla\varphi|\leq \frac{p-1}{C}|dv|^{p-2}|\nabla|dv||^{2}+\frac{C}{p-1}|dv|^{p}|\nabla\varphi|^{2}, $$
we infer 
$$ (p-1)\int_{M_+}\varphi^{2}|dv|^{p-2}|\nabla|dv||^{2}e^{-f}\leq\frac{C^{2}}{p-1}\int_{M_+}|dv|^{p}|\nabla\varphi|^{2}e^{-f}. $$
Hence
$$ \int_{M_+}\varphi^{2}|dv|^{p-2}|\nabla|dv||^{2}e^{-f}\leq\frac{C^{2}}{(p-1)^{2}R^2}\int_{(M_+)\cap B(p, 2P)}|dv|^{p}e^{-f}\leq \frac{C^{2}}{(p-1)^{2}R^2}\int_{M}|dv|^{p}e^{-f}.$$
Let $R\to\infty$, we conclude that $|dv|$ is constant on $M_+$. Since $v\in{\mathcal C}^{1}(M)$, it implies that $dv=0$ on $M$. Hence $v$ is constant. This gives a contradiction. The proof is complete. 
\end{proof}
Now, we have the following rigidity result.
\begin{theorem}\label{rigidity1}
Let $(M, g, e^{-f}dv)$ be a smooth metric measure space with $Ric_f\geq0$. Suppose that $\lambda_{1,p}=\frac{a^{p}}{p^{p}}$, where $a$ is the linear growth rate of $f$. Then, either $M$ is connected at infinity or $M=\RR\times N$ where $N$ is a compact manifold.
\end{theorem}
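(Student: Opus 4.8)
The plan is to adapt the proof of Theorem~\ref{theorem1}, with the linear growth rate $a$ of $f$ playing the role that $m-1$ plays there. Suppose $M$ is not connected at infinity, so that $M$ has at least two ends; I will show $M=\RR\times N$ with $N$ compact. Since $Ric_f\geq0$, Lemma~\ref{weightedend} guarantees at most one weighted $p$-nonparabolic end, so among the $\geq2$ ends at least one end $E$ is $p$-parabolic. Fix a geodesic ray $\gamma\subset E$ and let $\beta$ be its Busemann function, $\beta(x)=\lim_{t\to\infty}(t-dist(x,\gamma(t)))$, so that $|\nabla\beta|=1$ where $\beta$ is smooth.

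The key geometric input is the weighted Busemann comparison $\Delta_f\beta\geq-a$. This is the analogue, for $Ric_f\geq0$ together with $f\leq ar+b$, of the comparison~\eqref{laplaciancomparison} used in Theorem~\ref{theorem1}; it should follow from a Wei--Wylie type weighted Laplacian comparison, the constant being exactly the linear growth rate $a$ of $f$. Granting it, set $\omega=e^{\frac{a}{p}\beta}$. Because $|\nabla\beta|=1$, the computation in the proof of Theorem~\ref{theorem1} applies verbatim with $m-1$ replaced by $a$ and yields
$$ \Delta_{p,f}(\omega)\geq-\Big(\tfrac{a}{p}\Big)^{p}\omega^{p-1}=-\lambda_{1,p}\,\omega^{p-1}, $$
since by hypothesis $\lambda_{1,p}=(a/p)^{p}$.

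Next I would run the variational argument of Theorem~\ref{theorem1} without change. Testing the variational characterization $\lambda_{1,p}\int_M(\phi\omega)^{p}e^{-f}\leq\int_M|\nabla(\phi\omega)|^{p}e^{-f}$ against cutoffs $\phi$ equal to $1$ on $B(R)$ with $|\nabla\phi|\leq2/R$, integrating by parts, and using the two $f$-volume bounds $V_f(B(R))\leq CR^{n}e^{aR}$ (from the proof of Lemma~\ref{upper1}) and $V_f(E\setminus B(R))\leq\widehat{c}\,e^{-aR}$ (from \cite{BK}, valid since $\lambda_{1,p}$ is maximal and $E$ is $p$-parabolic), together with the Busemann bound $\beta\leq-r+\widetilde{c}$ on $M\setminus E$ from \cite{LW3}, one shows the boundary and gradient terms tend to $0$ as $R\to\infty$. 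This forces $\Delta_{p,f}(\omega)+\lambda_{1,p}\omega^{p-1}\equiv0$, hence $\Delta_f\beta\equiv-a$.

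Finally, equality in the Busemann comparison makes every intermediate Bochner and Cauchy--Schwarz inequality an equality, so $\nabla\beta$ is parallel and $Hess\,\beta=0$; as in \cite{LXD,LW3} this gives the isometric splitting $M=\RR\times N$ with $f$ affine along the $\RR$-factor. Since $M$ was assumed to have more than one end and $\RR\times N$ has more than one end only when $N$ is compact, $N$ is compact, which completes the dichotomy. The hard part will be the weighted Busemann comparison $\Delta_f\beta\geq-a$: in contrast with the $Ric_f^{m}$ setting of Theorem~\ref{theorem1}, the bound $Ric_f\geq0$ alone controls neither $\Delta_f$ of the distance nor of $\beta$, and one must genuinely exploit the linear growth of $f$ to recover the sharp constant $a$; verifying the equality-case splitting in the weighted setting is the second delicate point.
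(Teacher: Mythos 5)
Your proposal follows essentially the same route as the paper's proof: reduce to the case of at least two ends, use Lemma~\ref{weightedend} to isolate the parabolic end(s), build $\omega=e^{\frac{a}{p}\beta}$ from the Busemann function of a ray in the parabolic part, show $\Delta_{p,f}\omega\geq-\lambda_{1,p}\omega^{p-1}$, force equality via the variational characterization and the two $f$-volume estimates, and conclude the splitting from $\Delta_f\beta\equiv-a$. The one ingredient you flag as the ``hard part,'' the comparison $\Delta_f\beta\geq-a$ under $Ric_f\geq0$ and linear growth of $f$, is exactly what the paper imports from Munteanu--Wang \cite{MW1} (together with their volume estimates for the ends), so your outline matches the paper's argument step for step.
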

\begin{proof}Since $\lambda_{1, p}>0$, we know that $M$ is weighted $p$-nonparabolic. Assume that $M$ has at leat two ends. By Lemma \ref{weightedend}, we infer that $M$ has only one weighted $p$-nonparabolic end, all other are weighted $p$-parabolic. Let $E$ be the weighted $p$-nonparabolic end and $F=M\setminus E$. 

Let $\beta$ be the Busemann function associated with a geodesic ray $\gamma$ contained in $F$, namely,
$$ \beta(x)=\lim\limits_{t\to\infty}(t-dist(x, \gamma(t))). $$
In \cite{MW1}, Munteanu and Wang proved that 
$$ \Delta_f\beta\geq-a. $$
Hence,
$$ \begin{aligned}
\Delta_{p,f}\left(e^{\frac{a}{p}\beta}\right)
&=e^fdiv\left(e^{-f}\left(\frac{a}{p}\right)^{p-2}e^{\frac{a}{p}(p-2)\beta}\nabla e^{\frac{a}{p}\beta}\right)\\
&=\left(\frac{a}{p}\right)^{p-1}e^{\frac{a}{p}(p-1)\beta}\Delta_f\beta+\left(\frac{a}{p}\right)^{p-1}\left(\frac{a}{p}\right)(p-1)e^{\frac{a}{p}(p-1)\beta}|\nabla\beta|^2\\
&\geq a\left(\frac{a}{p}\right)^{p-1}e^{\frac{a}{p}(p-1)\beta}\left(\frac{a}{p}-1\right)\\
&=-\left(\frac{a}{p}\right)^pe^{\frac{a}{p}(p-1)\beta}.
\end{aligned} $$
Therefore, let $\omega:=e^{\frac{a}{p}\beta}$, we obtain 
$$ \Delta_{p,f}(\omega)\geq-\lambda_{1,p}\omega^{p-1}. $$
Suppose that $\phi$ is a nonnegative compactly supported smooth function on $M$. Then by the variational principle,
$$ \lambda_{1,p}\int_M(\phi\omega)^pe^{-f}\leq\int_M|\nabla(\phi\omega)|^pe^{-f}. $$
Noting that, integration by parts implies
$$ \int_M\phi^p\omega\Delta_{p,f}(\omega) e^{-f}=-\int_M\phi^p|\nabla\omega|^{p}e^{-f}-p\int_M\phi^{p-1}\omega\left\langle\nabla\phi,\nabla\omega\right\rangle|\nabla\omega|^{p-2}e^{-f}  $$
and
$$ \begin{aligned}
|\nabla(\phi\omega)|^p
&=\left(|\nabla\phi|^2\omega^2+2\phi\omega\left\langle\nabla\phi, \nabla\omega\right\rangle+\phi^2|\nabla\omega|^{p-2} \right)^\frac{p}{2}\\
&\leq \phi^p|\nabla\omega|^p+p\phi\omega\left\langle\nabla\phi, \nabla\omega\right\rangle\phi^{p-2}|\nabla\omega|^{p-2}+c|\nabla\phi|^2\omega^p 
\end{aligned} $$
for some constant $c$ depending only on $p$, we infer
\begin{align}
&\int_M\phi^p\omega(\Delta_{p,f}(\omega)+\lambda_{1,p}\omega^{p-1})e^{-f}\notag\\
&=\lambda_{1,p}\int_M(\phi\omega)^pe^{-f}-\int_M\phi^p|\nabla\omega|^pe^{-f}-p\int_M\phi^{p-1}\omega\left\langle\nabla\phi,\nabla\omega\right\rangle|\nabla\omega|^{p-2}e^{-f}\notag\\
&\leq\int_M|\nabla(\phi\omega)|^pe^{-f}-\int_M\phi^p|\nabla\omega|^pe^{-f} -p\int_M\phi^{p-1}\omega\left\langle\nabla\phi,\nabla\omega\right\rangle|\nabla\omega|^{p-2}e^{-f}\notag\\
&\leq c\int_M|\nabla\phi|^2\omega^pe^{-f}.\label{swe311}
\end{align} 
Now, we choose
$$ \phi=\begin{cases}
1&\quad\text{in }B(R)\\
0,&\quad\text{ on }M\setminus B(2R)
\end{cases} $$
such that $|\nabla\phi|\leq\frac{2}{R}$. Then we conclude
\begin{align}
\int_M|\nabla\phi|^2\omega^pe^{-f}
&=\int_M|\nabla\phi|^2e^{a\beta}e^{-f}\notag\\
&\leq\frac{4}{R^2}\int_{B(2R)\setminus B(R)}e^{a\beta}e^{-f}\notag\\
&=\frac{4}{R^2}\int_{F\cap(B(2R)\setminus B(R))}e^{a\beta}e^{-f}+\frac{4}{R^2}\int_{(M\setminus F)\cap(B(2R)\setminus B(R))}e^{a\beta}e^{-f}.\label{swe321}
\end{align}
Since $\lambda_{1,p}=\left(\frac{a}{p}\right)^p$, by the proof of Theorem 4.1 in \cite{MW1} (or by theorem 0.1 in \cite{BK}), it turns out that
$$ V_f(F\setminus B(R))\leq  Ce^{-aR}. $$
Hence, the first term of \eqref{swe321} tends to $0$ as $R$ goes to $\infty$. Moreover, by the proof of Theorem 4.1 in \cite{MW1}, we also know that
$$ V_f(B(R)\cap E)\leq ce^{aR}. $$
Therefore, the second term of \eqref{swe321} also goes to $0$ as $R\to\infty$. Consequently, the inequality \eqref{swe311} implies 
$$ \Delta_{p,f}(\omega)+\lambda_{1,p}\omega^{p-1}\equiv0. $$
This implies 
$$ \Delta_f\beta=-a \text{ and }|\nabla\beta|=1. $$
holds every where on $M$. Using the argument in \cite{MW1}, we conclude that $M=\RR\times N^{n-1}$ for some compact manifold $N$ of dimension $n$. The proof is complete.
\end{proof}
\begin{theorem}\label{lower2}
Let $(M, g, e^{-f}dv)$ be a complete smooth metric measure space with $Ric_f\geq-(n-1)$. Suppose that the linear growth rate of $a$ is $a$. Then we have
$$ \lambda_{1,p}(M)\leq\left(\frac{n-1+a}{p}\right)^{p} $$
In particular, if $f$ is of sublinear growth, then the bottom spectrum of the weighted $p$-Laplacian has the following sharp upper bound:
$$ \lambda_{1,p}(M)\leq\left(\frac{n-1}{p}\right)^{p}. $$
\end{theorem}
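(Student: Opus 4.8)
The plan is to reuse the sandwich argument from the proofs of Lemma \ref{eigenvalue} and Lemma \ref{upper1}: trap the weighted volume $V_f(B(R))$ of large geodesic balls between an exponential lower bound coming from the positivity of $\lambda_{1,p}$ and an exponential upper bound coming from the curvature condition together with the linear growth of $f$, and then let $R\to\infty$. (We read the hypothesis in the intended way: $f$ has linear growth rate $a$, so that along rays $\partial_r f\ge -a$, equivalently $f\ge -a\,r+\mathrm{const}$.)

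First I would dispose of the trivial case $\lambda_{1,p}=0$, where the claimed bound is immediate since $n\ge2$ and $a\ge0$; hence we may assume $\lambda_{1,p}>0$. As in Lemma \ref{eigenvalue}, the variational characterization of $\lambda_{1,p}$ forces $M$ to have infinite $f$-volume, so theorem 0.1 in \cite{BK} applies: $M$ is $p$-nonparabolic and
$$ V_f(B(R))\ge Ce^{p\lambda_{1,p}^{1/p}R} $$
for all sufficiently large $R$. This lower bound is purely spectral and needs no curvature input.

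The decisive step is the matching upper bound
$$ V_f(B(R))\le C'e^{(n-1+a)R}. $$
Here the clean comparison \eqref{volumecomparison}, available in Lemma \ref{eigenvalue} through the $m$-dimensional tensor, is not at our disposal, since only the $\infty$-Bakry-\'{E}mery bound $Ric_f\ge-(n-1)$ is assumed. Instead I would invoke the Wei--Wylie weighted comparison (in the spirit of \cite{MW1, MW2}): under $Ric_f\ge-(n-1)$ and the radial control $\partial_r f\ge-a$ furnished by the linear growth of $f$, the weighted mean curvature obeys
$$ \Delta_f r\le (n-1)\coth r+a. $$
Integrating this along minimizing geodesics bounds the weighted area element by a multiple of $e^{(n-1+a)r}$ for large $r$, since $\coth r\to1$, and integrating once more in $r$ yields the stated volume estimate.

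With both estimates in hand, combining them gives $Ce^{p\lambda_{1,p}^{1/p}R}\le C'e^{(n-1+a)R}$, hence $p\lambda_{1,p}^{1/p}\le (n-1+a)+R^{-1}\ln(C'/C)$; letting $R\to\infty$ produces $\lambda_{1,p}\le\left(\frac{n-1+a}{p}\right)^{p}$. The sublinear case follows by taking $a=0$. The main obstacle is precisely the upper volume estimate: one must verify that the notion of \emph{linear growth rate $a$} translates into exactly the radial-derivative bound $\partial_r f\ge-a$ that feeds the $\infty$-dimensional comparison, and that the transition of $\coth r$ to its limit contributes no factor spoiling the sharp exponent $n-1+a$. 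Once this comparison is correctly set up, the rest of the argument is routine.
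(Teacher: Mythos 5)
Your proposal matches the paper's proof: both sandwich $V_f(B(R))$ between the spectral lower bound $Ce^{p\lambda_{1,p}^{1/p}R}$ from Theorem 0.1 of \cite{BK} and the upper bound $C'e^{(n-1+a)R}$, then let $R\to\infty$. The only difference is that the paper simply cites the volume estimate of \cite{MW2} for the upper bound, whereas you sketch its derivation via the weighted Laplacian comparison $\Delta_f r\leq(n-1)\coth r+a$; this is the same argument, just unpacked.
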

The sharpness of the $\lambda_{1,p}$ is desmontrated by the following example.
\begin{example}
Let $M=\RR\times N$with a warped product metric
$$ ds^{2}=dt^{2}+e^{2t}ds_N^{2}, $$
where $N$ is a complete manifold with non-negative Ricci curvature. Then it can be directly checked that $Ric_M\geq-(n-1)$. Choose wighted function $f=-at$, then the Bakry-\'{E}mery curvature is bounded by $Ric_f\geq-(n-1)$. Let $v(t,x)=e^{-bt}$, where $b=\frac{n-1+a}{p}$ then 
$$ \begin{aligned}
%%%%
e^fdiv(e^{-f}|\nabla v|^{p-2}\nabla v)
&=|\nabla v|^{p-2}\Delta v+\left\langle \nabla|\nabla v|^{p-2}, \nabla v\right\rangle-|\nabla v|^{p-2}\left\langle\nabla v, \nabla f\right\rangle\\
&=(1-n+b)b^{p-1}v^{p-1}+(p-2)b^{p}v^{p-1}-ab^{p-1}v^{p-1}\\
&=(b(p-1)-(n-1+a))b^{p-1}v^{p-1}\\
&=-\left(\frac{n-1+a}{p}\right)^{p}v^{p-1}
%%%%
\end{aligned} $$
Hence, by Lemma \ref{lowerbound}, we conclude $\lambda_{1,p}\geq\left(\frac{n-1+a}{p}\right)^{p}$.
\end{example}
Now, we will a proof of the theorem \ref{lower2}. 
\begin{proof}
We follow the proof of Lemma \ref{upper1}. Without loss of generality, we may assume that $\lambda_{1,p}$ is positive. By the variational characterization of $\lambda_{1,p}$, we know that $M$ has infinite $f$-volume. The theorem 0.1 in \cite{BK} implies $M$ is $p$-nonparabolic, moreover 
$$ V_f(B_p(R))\geq Ce^{p\lambda_{1,p}^{1/p}R}, $$
for all sufficiently large $R$ and $C$ is a constant dependent on $r$. On the other hand, by the volume estimate in \cite{MW2}, we have known that
$$ V_f(B_p(R))\leq C'e^{(n-1+a)R}, $$
for some constant $C'>0$. Therefore, we have for some constant $C>0$
$$ e^{p\lambda_{1,p}^{1/p}R}\leq Ce^{(n-1+a)R}, $$
for all $R$. This implies
$$ \lambda_{1,p}\leq\left(\frac{n-1+a}{p}\right)^p. $$
The proof is complete.
\end{proof}
\begin{theorem}\label{riccif}
Let $(M^{n}, g, e^{-f}dv)$ be a smooth metric measure space with $Ric_f\geq0$ and $n\geq3$. Assume that $|\nabla f|\leq a$ on $M$ for some constant $a\geq0$. If $\lambda_{1,p}(M)=\left(\frac{n-1+a}{p}\right)^{p}$ then either $M$ has no weighted $p$-parabolic end; or $M$ is a warped product $M=\RR\times N$, where $N$ is compact.  
\end{theorem}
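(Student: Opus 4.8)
The plan is to follow the proof of Theorem \ref{theorem1} almost verbatim, replacing the constant $m-1$ throughout by $n-1+a$. I would assume that $M$ carries a weighted $p$-parabolic end $E$ and aim to produce the splitting. First I would fix a geodesic ray $\gamma\subset E$ and form its Busemann function
$$ \beta(x)=\lim\limits_{t\to\infty}\big(t-dist(x,\gamma(t))\big). $$
The essential geometric input is the comparison $\Delta_f\beta\geq-(n-1+a)$; this is the Busemann form of the $f$-mean curvature estimate under $Ric_f\geq0$ and $|\nabla f|\leq a$ that underlies the volume bound $V_f(B(R))\leq C e^{(n-1+a)R}$ already invoked in Theorem \ref{lower2} (see \cite{MW2}). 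Setting $\omega:=e^{\frac{n-1+a}{p}\beta}$ and differentiating exactly as in the computation preceding Theorem \ref{theorem1}, the bound on $\Delta_f\beta$ together with $|\nabla\beta|=1$ gives
$$ \Delta_{p,f}(\omega)\geq-\Big(\tfrac{n-1+a}{p}\Big)^p\omega^{p-1}=-\lambda_{1,p}\,\omega^{p-1}. $$

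Next I would test this against $\phi^p\omega$, where $\phi\equiv1$ on $B(R)$, $\phi\equiv0$ off $B(2R)$ and $|\nabla\phi|\leq 2/R$, and feed it into the variational inequality $\lambda_{1,p}\int_M(\phi\omega)^pe^{-f}\leq\int_M|\nabla(\phi\omega)|^pe^{-f}$. Integrating by parts and using the elementary pointwise estimate for $|\nabla(\phi\omega)|^p$ from the proof of Theorem \ref{theorem1}, all terms cancel except a remainder controlled by $\int_M|\nabla\phi|^2\omega^pe^{-f}$, so it suffices to show that this tends to $0$ as $R\to\infty$. I would split it over $E$ and $M\setminus E$: on the parabolic end, maximality of $\lambda_{1,p}$ and Theorem 0.1 in \cite{BK} give $V_f(E\setminus B(R))\leq\widehat c\,e^{-(n-1+a)R}$, while on $M\setminus E$ the bound $\beta(x)\leq-r(x)+\widetilde c$ from \cite{LW3} together with $V_f(B(R))\leq c'e^{(n-1+a)R}$ forces the complementary integral to vanish in the limit.

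With the remainder gone, the variational inequality collapses to the identity $\Delta_{p,f}(\omega)+\lambda_{1,p}\omega^{p-1}\equiv0$, hence $\Delta_f\beta\equiv-(n-1+a)$ and $|\nabla\beta|\equiv1$ on all of $M$. Saturation of the comparison forces equality in every inequality of the underlying Bochner/Riccati computation, which is the rigid situation analysed in \cite{LXD}; combining this with the Busemann-function argument of \cite{LW3} as in the last step of Theorems \ref{theorem1} and \ref{rigidity1} yields $M=\RR\times N^{n-1}$ with $N$ compact, as claimed.

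I expect the genuine difficulty to lie in the opening step, namely establishing the sharp Busemann comparison $\Delta_f\beta\geq-(n-1+a)$ from $Ric_f\geq0$ and $|\nabla f|\leq a$, and in securing the matching two-sided volume control ($e^{-(n-1+a)R}$ on the parabolic end and $e^{(n-1+a)R}$ globally) that makes the cutoff remainder vanish. Once the comparison is in hand and recognised to be saturated, the equality discussion and the resulting splitting are routine transcriptions of \cite{LXD, LW3}.
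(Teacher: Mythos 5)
Your proposal follows the same route as the paper's own proof: the Busemann comparison $\Delta_f\beta\geq-(n-1+a)$ from \cite{MW2}, the test function $\omega=e^{\frac{n-1+a}{p}\beta}$ as a subsolution, the cutoff argument with the two-sided volume decay $V_f(E\setminus B(R))\leq\widehat{c}e^{-(n-1+a)R}$ and $\beta\leq-r+\widetilde{c}$ on $M\setminus E$, and the rigidity conclusion from saturation of the comparison. The argument is correct and matches the paper essentially verbatim.
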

\begin{proof}We follow the proof of Theorem \ref{theorem1}. Suppose that $M$ has a weighted $p$-parabolic end $E$. Let $\beta$ be the Busemann function associated with a geodesic ray $\gamma$ contained in $E$, namely,
$$ \beta(x)=\lim\limits_{t\to\infty}(t-dist(x, \gamma(t))). $$
By the proof of Theorem 4.1 in \cite{MW2}, we have 
$$ \Delta_f\beta\geq-(n-1+a) $$
in the sense of distributions. Hence,
$$ \begin{aligned}
\Delta_{p,f}\left(e^{\frac{n-1+a}{p}\beta}\right)
&=e^fdiv\left(e^{-f}\left(\frac{n-1+a}{p}\right)^{p-2}e^{\frac{n-1+a}{p}(p-2)\beta}\nabla e^{\frac{n-1+a}{p}\beta}\right)\\
&=\left(\frac{n-1+a}{p}\right)^{p-1}e^{\frac{n-1+a}{p}(p-1)\beta}\Delta_f\beta+\left(\frac{n-1+a}{p}\right)^{p-1}\left(\frac{n-1+a}{p}\right)(p-1)e^{\frac{n-1+a}{p}(p-1)\beta}|\nabla\beta|^2\\
&\geq(n-1+a)\left(\frac{n-1+a}{p}\right)^{p-1}e^{\frac{n-1+a}{p}(p-1)\beta}\left(\frac{p-1}{p}-1\right)\\
&=-\left(\frac{n-1+a}{p}\right)^pe^{\frac{n-1+a}{p}(p-1)\beta}.
\end{aligned} $$
Hence, let $\omega:=e^{\frac{n-1+a}{p}\beta}$, we obtain 
$$ \Delta_{p,f}(\omega)\geq-\lambda_{1,p}\omega^{p-1}. $$
Suppose that $\phi$ is a nonnegative compactly supported smooth function on $M$. Then by the proof of Theorem \ref{theorem1}, we have
\begin{align}
&\int_M\phi^p\omega(\Delta_{p,f}(\omega)+\lambda_{1,p}\omega^{p-1})e^{-f}\notag\\
&=\lambda_{1,p}\int_M(\phi\omega)^pe^{-f}-\int_M\phi^p|\nabla\omega|^pe^{-f}-p\int_M\phi^{p-1}\omega\left\langle\nabla\phi,\nabla\omega\right\rangle|\nabla\omega|^{p-2}e^{-f}\notag\\
&\leq\int_M|\nabla(\phi\omega)|^pe^{-f}-\int_M\phi^p|\nabla\omega|^pe^{-f} -p\int_M\phi^{p-1}\omega\left\langle\nabla\phi,\nabla\omega\right\rangle|\nabla\omega|^{p-2}e^{-f}\notag\\
&\leq c\int_M|\nabla\phi|^2\omega^pe^{-f}.\label{swe3111}
\end{align} 
Now, we choose
$$ \phi=\begin{cases}
1&\quad\text{in }B(R)\\
0,&\quad\text{ on }M\setminus B(2R)
\end{cases} $$
such that $|\nabla\phi|\leq\frac{2}{R}$. Then we conclude
\begin{align}
\int_M|\nabla\phi|^2\omega^pe^{-f}
&=\int_M|\nabla\phi|^2e^{(m-1)\beta}e^{-f}\notag\\
&\leq\frac{4}{R^2}\int_{B(2R)\setminus B(R)}e^{(m-1)\beta}e^{-f}\notag\\
&=\frac{4}{R^2}\int_{E\cap(B(2R)\setminus B(R))}e^{(m-1)\beta}e^{-f}+\frac{4}{R^2}\int_{(M\setminus E)\cap(B(2R)\setminus B(R))}e^{(m-1)\beta}e^{-f}.\label{swe3211}
\end{align}
Since $\lambda_{1,p}=\left(\frac{n-1+a}{p}\right)^p$, by the proof of Theorem 4.1 in \cite{MW2}, it turns out that
$$ V_f(E\setminus B(R))\leq  \widehat{c}e^{-(n-1+a)R}. $$
Hence, the first term of \eqref{swe3211} tends to $0$ as $R$ goes to $\infty$. On the other hand, by \cite{LW3} we have 
$$ \beta(x)\leq-r(x)+\widetilde{c} $$
on $M\setminus E$. The Laplacian comparison theorem in \cite{MW2} implies $V_f(E\cap B(R))\leq c'e^{(n-1+a)R}$. It turns out that the second term of \eqref{swe3211} also goes to $0$ as $R\to\infty$. Therefore, \eqref{swe3111} infers
$$ \Delta_{p,f}(\omega)+\lambda_{1,p}\omega^{p-1}\equiv0. $$
This implies 
$$ \Delta_f\beta=-(n-1+a) \text{and }|\nabla\beta|=1. $$
By the argument in \cite{MW2}, we conclude that $M=\RR\times N^{n-1}$ for some compact manifold $N$ of dimension $n$. The proof is complete.
\end{proof}
\vskip0.5cm
\noindent
\subsection*{Acknowledgment}
A part of this paper was done during a visit of the first author to Vietnam Institute for Advanced Study in Mathematics (VIASM) and Institut Fourier, Grenoble, France. He would like to express his thanks to staffs there for the excellent working conditions, and the financial support. He also would like to express his deep gratitude to his advisor Prof. Chiung Jue Sung for her constant support. The authors also thank Munteanu and J. Y. Wu for their usefull comments on the earlier manuscript of this paper. This work is supported in part by NAFOSTED under grant number 101.02-2014.49. 
%%%%%%%%%%%%%%%%%%%%
%%%%%%%%%%%%%%%%%%%%
%%%%%%%%%%%%%%%%%%%%
\vskip0.5cm

\addcontentsline{toc}{section}{3 \hspace{0.09cm} References}

\vskip0.4cm

\bigskip
\noindent
Nguyen Thac Dung,\\
 \textit{\ 
Department of Mathematics - Mechanics - Informatics (MIM),
Hanoi University of Sciences (HUS-VNU),
No. 334, Nguyen Trai Road,
Thanh Xuan, Hanoi, Vietnam.}\\
{\small E-mail address: dungmath@yahoo.co.uk}

\vskip0.4cm

\bigskip
\noindent
Nguyen Duy Dat,\\
 \textit{\ 
Department of Mathematics - Mechanics - Informatics (MIM),
Hanoi University of Sciences (HUS-VNU),
No. 334, Nguyen Trai Road,
Thanh Xuan, Hanoi, Vietnam.}\\
{\small E-mail address: duynguyendat@gmail.com}


\begin{thebibliography}{abcd}
\bibitem{alleg} W. Alleggretto and Y. X. Huang, \emph{A Picone's identity for the $p$-Laplacian and applications}, 
Nonlinear Analysis: Theory, Methods \& Applications, \textbf{32} (1998) No. 7, 819-830
\bibitem{BQ} D. Bakry and Z. M. Qian, \emph{Volume comparison theorems withour Jacobi fields}, in Current Trends in Potential Theory, in: Theta Ser. Adv. Math., vol. 4, Theta, Bucharest, 2005, 115-122.
%\bibitem{BCS} M. Batista, M. P. Cavalcante and N. L. Santos, \emph{The $p$-hyperbolicity of infinite volume ends and appications}, Geometriae Dedicata, \textbf{171} (2014), 397-406
\bibitem{Bri} K. Brighton, \emph{A Liouville-type Theorem for Smooth Metric Measure spaces}, Jour. Geom. Anal., \textbf{23} (2013), no.2, 562-570. Also see arXiv:1006.0751 v2 [math.DG] 13 Jan 2011.\
\bibitem{BK} S. Buckley and P. Koskela, \emph{Ends of metric measure spaces and Sobolev inequality}, Math. Zeits., \textbf{252} (2005), 275-285
\bibitem{Bus} P. Buser, \emph{A note on the isoperimetric constant}, Ann. Sci. Ecole Norm. Sup. \textbf{15} (1982), 213-230.
\bibitem{CCW} S. C. Chang, J. T. Chen and S. W. Wei, \emph{Lioiville properties for $p$-harmonic maps with finite $q$-energy}, to appear in Tran. Amer. Math. Soc., http://dx.doi.org/10.1090/tran/6351
\bibitem{Cheng} S. Y. Cheng, \emph{Liuoville theorem for harmonic maps. Geometry of the Laplace operator}, Proc. Sympos. Pure Math., Univ. Hawaii, Honolulu, Hawaii, 1979, 147-151.
\bibitem{CY75} S. Y. Cheng and S. T. Yau, \emph{Differential equations on Riemannian manifolds and their geometric applications}, Comm. Pure Appl. Math., \textbf{28} (1975), 333-354.
\bibitem{Choi} H. I. Choi, \emph{On the Liouville theorem for harmonic maps}, Proc. Amer. Math. Soc., \textbf{85} (1982), no.1, 91-94.
\bibitem{KL} B. Kotschwar and L. Ni, \emph{Gradient estimate for $p$-harmonic function, $1/H$ flow and an entropy formula}, Ann. Sci. \'{E}c. Norm. Sup\'{e}r., \textbf{42} (2009), 1-36 
\bibitem{S7} P. Li, \emph{Harmonic functions and applications to complete manifolds}, Preprint (available on the author's homepage).
\bibitem{LY} P. Li and S. T. Yau, \emph{On the parabolic kernel of the Schr\"{o}dinger operator}, Acta Math., \textbf{156} (1986), no. 3-4, 153-201.
\bibitem{LXD} X. D. Li, \emph{Perelman's entropy formula for the Witten Laplacian on Riemannian manifolds via Bakry-\'{E}mery Ricci curvature}, Math. Ann., \textbf{353} (2012), 403-437. 
\bibitem{LW1} P. Li and J. Wang, \emph{Complete manifolds with positive spectrum}, Jor. Diff. Geom., \textbf{58} (2001) 501-534
\bibitem{LW2} P. Li and J. Wang, \emph{Complete manifolds with positive spectrum II}, Jour. Diff. Geom., \textbf{62} (2002) 143-162.
\bibitem{Liwang} P. Li and J. Wang, \emph{Weighted Poincar\'{e} inequality and rigidity of complete manifolds}, Ann. Scient. \'{E}c. Norm. Sup., \textbf{39} (2006) 921 - 982.
\bibitem{LW3} P. Li and J. Wang, \emph{Connectedness at infinity of complete K\"{a}hler manifolds}, Amer. Jour. Math., \textbf{131} (2009), 771-817
\bibitem{LXD} X. D. Li, \emph{Liouville theorems for symmetric diffusion operators on complete Riemannian manifolds}, Jour. Math. Pures Appl., \textbf{84} (2005), 1295-1361
\bibitem{litam} P. Li and L-F. Tam, \emph{Linear growth harmonic functions on a complete manifold}, Jour. Diff. Geom. \textbf{29} (1989), 421-425.
\bibitem{Mo} R. Moser, \emph{The inverse mean curvature flow and $p-$harmonic functions, } Jour. Eur. Math.  Soc., {\bf 9} (2007), 77-83.
\bibitem{MW1} O. Munteanu, and J. Wang, \emph{Smooth metric measure spaces with nonnegative curvature},  Comm. Anal. Geom., \textbf{19} (2011), no. 3, 451-486, see arXiv: 1103.0746v1 [math.DG] 3 Mar 2011.
\bibitem{MW2} O. Munteanu, and J. Wang, \emph{Analysis of the weighted Laplacian and applications to Ricci solitons}, Comm. Anal. Geom., \textbf{20} (2012), no. 1, 55-94
\bibitem{nak} N. Nakauchi, \emph{A Liouville type theorem for $p$-harmonic maps}, Osaka Jour. Math., \textbf{35} (1998), 303 - 312
\bibitem{pw} P. Petersen and W. Wylie, \emph{Rigidity of gradient Ricci solitons}, Pacific Jour. Math., \textbf{241} (2009) 329 - 345
\bibitem{Rimoldi} S. Pigola, M. Rigoli and A. G. Setti, \emph{Constancy of p-harmonic maps of finite q-energy into non-positively curved manifolds}, Math. Zeits., \textbf{258} (2008), 347–362
%\bibitem{PST} S. Pigola, A. Setti and M. Troyanov, \emph{The topology at infinity of a manifold and $L^{p,q}$-Sobolev inequality}, Preprint see arXiv:1007.1761
\bibitem{Sal} L. Saloff-Coste, Aspects of Sobolev-type inequalities, London Mathematical Society Lecture
Note Series, 289. Cambridge University Press, Cambridge, 2002.
\bibitem{Sal1} L. Saloff-Coste, \emph{A note on Poincar´e, Sobolev, and Harnack inequalities}, Internat. Math. Res. Notices (1992), 27-38
\bibitem{SY94} R. Schoen and S. T. Yau, \emph{Lectures on Differential Geometry}, International Press, Boston, 1994.
\bibitem{SW} C.-J. A. Sung, J. Wang, \emph{Sharp gradient estimate and spectral rigidity for $p$-Laplacian}, Math. Res. Lett. \textbf{21}  (2014),  no. 4, 885-904.
\bibitem{Tol} P. Tolksdorf, \emph{Regularity for a more general class of quasilinear elliptic equations}, Jour.
Diff. Equa., \textbf{51} (1984), no. 1, 126-150.
\bibitem{W} L. F. Wang, \emph{Eigenvalue estimate for the weighted $p$-Laplacian}, Annali di Matematica Pure Appl., \textbf{191} (2012) no.3, 539-550.
\bibitem{W1} L. F. Wang, \emph{A splitting theorem for the weighted measure}, Ann. Glob. Anal. Geom. \textbf{42} (2012) 79 - 89.
\bibitem{WZ} X. Wang, and L. Zhang, \emph{Local gradient estimate for $p-$harmonic functions on Riemannian manifolds}, preprint, Comm. Anal. Geom., \textbf{19} (2011), no. 4, 759-771, see also arXiv 1010.2889 v1 [math.DG] 14 Oct 2010.
\bibitem{WYC} Y. Wang, J. Yang and, W. Chen, \emph{Gradient estimates and entropy formulae for weighted $p$-heat equations on smooth metric measure spaces}, Acta Math. Scientia \textbf{33B} (2013), 963 - 974
%\bibitem{WZ} L. F. Wang and Y. P. Zhu, \emph{A sharp gradient for the weighted $p$-Laplacian}, Appl. Math. Jour. Chinese Univ., \textbf{74} (2012) 462-474.
%\bibitem{Wu} J. Y. Wu, \emph{Upper bound on the first eigenvalue for a diffusion operator via Bakry-\'{E}mery Ricci curvature}, Jour. Math. Anal. Appl., {\bf 361} (2010), 10-18.
\bibitem{Wu1} J. Y. Wu, \emph{Li-Yau type estimates for a nonlinear parabolic equation on complete manifolds}, Jour. Math. Anal. Appl., \textbf{369} (2010), no.1, 400-407
\bibitem{Wu2} J. Y. Wu, \emph{A note on the splitting theorem for the weighted measure}, Ann. Glob. Anal. Geom., \textbf{43} (2013) no. 3, 287-298.
\bibitem{Wu14} J. Y. Wu, \emph{$L^{p}$-Liouville theorems on complete smooth metric measure spaces}, Bull. Sci. Math., \textbf{318} (2014), 510-539.
\end{thebibliography}
\end{document}